\title[Rotational Symmetry II]{Rotational symmetry of solutions of mean curvature flow coming out of a double cone II}
\author{Letian Chen}
\address{Department of Mathematics, Johns Hopkins University, 3400 N. Charles Street, Baltimore, MD 21218}
\email{lchen155@jhu.edu}
\date{January 25, 2022}
\renewcommand*\env@matrix[1][*\c@MaxMatrixCols c]{%
\hskip -\arraycolsep
\let\@ifnextchar\new@ifnextchar
\array{#1}}
\newtheorem{thm}{Theorem}[section]
\newtheorem{lem}[thm]{Lemma}
\newtheorem{prop}[thm]{Proposition}
\newtheorem{cor}[thm]{Corollary}
\newtheorem{conj}[thm]{Conjecture}
\newtheorem{ques}[thm]{Question}
\theoremstyle{definition}
\newtheorem{defn}[thm]{Definition}
\theoremstyle{remark}
\newtheorem*{rem}{Remark}
\newcommand{\overbar}[1]{\mkern 1.5mu\overline{\mkern-1.5mu#1\mkern-1.5mu}\mkern 1.5mu}
\newcommand\abs[1]{\left|#1\right|}
\newcommand\norm[1]{\left\lVert#1\right\rVert}
\newcommand\inner[1]{\left \langle #1\right \rangle}
\DeclareMathOperator{\Div}{div}
\DeclareMathOperator{\supp}{supp}
\DeclareMathOperator{\dist}{dist}
\DeclareMathOperator{\reg}{reg}
\numberwithin{equation}{section}
\begin{document}
\begin{abstract}
	Given a double cone $\mathcal{C}$ with entropy at most two that is symmetric across some hyperplane, we show that any integral Brakke flow coming out of the cone must inherit the reflection symmetry for all time, provided the flow is smooth for a short time. As a corollary we prove that any such flow coming out of a rotationally symmetric double cone must stay rotationally symmetric for all time. We also show the existence of a non-self-similar flow coming out of a double cone with entropy at most two, and give an example of such a flow with a finite time singularity. Additionally, we show the existence of self-expanders with triple junctions, which are exceptions to our main theorem.
\end{abstract}
\maketitle
\section{Introduction}
A family of properly embedded hypersurface $\{\Sigma_t\}_{t \in I}$ satisfies the mean curvature flow (MCF) equation if: 
\begin{align*}
	\left(\frac{\partial x}{\partial t}\right)^\perp = H_{\Sigma_t}(x), \;\; x \in \Sigma_t.
\end{align*}
Here $H_{\Sigma_t}$ is the mean curvature vector of $\Sigma_t$, $x$ is the position vector and $\perp$ denotes the projection onto the normal bundle. In this paper we study mean curvature flows (MCF) that come out of (smooth) cones. Due to the singularity at the origin, there could be multiple distinct mean curvature flows coming out of a given cone $\mathcal{C}$. We are interested in how symmetries of the cone influence these solutions.  \par 
The simplest solutions coming out of cones are self-expanders. We say a properly embedded hypersurface $\Sigma^n \subset \mathbb{R}^{n+1}$ is a \textit{self-expander} if 
\begin{align}
\label{self-expander-equation}
	H_\Sigma(x) = \frac{x^\perp}{2}.
\end{align}
Equivalently, $\Sigma$ is an self-expander if and only if $\{\sqrt{t}\Sigma\}_{t \in (0,\infty)}$ is a solution to the MCF. Self-expanders are of particular importance in the study of singularities of MCF as they arise naturally as models of how MCFs flow through conical singularities. \par 
There exist, however, other solutions coming out of the cone that are not self-similar. A class of such solutions was first sketched by Bernstein--Wang \cite{BWTopologicalUniqueness} as Morse flow lines between unstable and stable self-expanders asymptotic to the same cone. Note that the existence of such solution requires the existence of an unstable self-expander asymptotic to $\mathcal{C}$, which is not guaranteed.  \par 
In the present article, we study flows coming out of a double cone with reflection symmetry. Our main result roughly says that any MCF coming out of a suitable cone with a reflection symmetry across some hyperplane must inherit the symmetry for all future times, provided the flow is initially smooth for a short time (see \cref{preliminaries} for terminologies). 
\begin{thm}
	\label{reflection-symmetry}
	Let $\Pi \subset \mathbb{R}^{n+1}$ be a hyperplane passing through the origin and $\mathbb{H}$ an open half-space with $\partial \mathbb{H} = \Pi$. Let $\mathcal{C} \subset \mathbb{R}^{n+1}$ be a smooth double cone with $\lambda[\mathcal{C}] < 2$ such that $\mathcal{C} \cap \mathbb{H}$ is a Lipschitz graph over $\Pi$. Let $\mathcal{M} = \{\mu_t\}_{t \in [0,\infty)}$ be an integral, unit-regular and cyclic Brakke flow coming out of $\mathcal{C}$; that is,
	\begin{align*}
		\lim_{t \to 0} \mu_t = \mathcal{H}^n \llcorner \mathcal{C}
	\end{align*} 
	as Radon measures. Suppose also $\mathcal{M}$ is smooth on $(0,T)$ for some $T > 0$. If $\mathcal{C}$ is symmetric across $\Pi$, then so is $\mathcal{M}$ for $t \in [0,\infty)$. Moreover, $\mathcal{M}$ is smooth away from $\Pi$.
\end{thm}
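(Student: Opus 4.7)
The strategy is to compare $\mathcal{M}$ with its reflection $R_{\#}\mathcal{M}$, where $R \colon \mathbb{R}^{n+1} \to \mathbb{R}^{n+1}$ is the Euclidean reflection across $\Pi$. Since $R\mathcal{C} = \mathcal{C}$, the pushforward $R_{\#}\mathcal{M}$ is also an integral, unit-regular, cyclic Brakke flow with initial datum $\mathcal{H}^n \llcorner \mathcal{C}$, and it is smooth on $(0,T)$. The content of the theorem is precisely that $\mathcal{M} = R_{\#}\mathcal{M}$ as Brakke flows on $[0,\infty)$; I would establish this in two stages, first on the smooth interval $(0,T)$, then past the first singular time, and finally argue separately that singularities are confined to $\Pi$.

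On $(0,T)$, the plan is to pass to the normalized flow $\Sigma_\tau = e^{-\tau/2}\mathcal{M}_{e^\tau}$, whose stationary points are self-expanders asymptotic to $\mathcal{C}$ and which converges to $\mathcal{C}$ as $\tau \to -\infty$ on compact subsets of $\mathbb{R}^{n+1} \setminus \{0\}$ (using the smoothness of $\mathcal{M}$ together with $\mu_t \to \mathcal{H}^n \llcorner \mathcal{C}$). The Lipschitz graph hypothesis on $\mathcal{C} \cap \mathbb{H}$ over $\Pi$ permits writing both $\Sigma_\tau$ and $R\Sigma_\tau$ as (small) graphs over $\mathcal{C}$ for $\tau$ sufficiently negative, and the difference $w$ of the graph functions satisfies a linear parabolic equation with $w \to 0$ as $\tau \to -\infty$. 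A Carleman-type backward uniqueness estimate (in the spirit of Escauriaza-Seregin-Sverak, or an explicit maximum-principle argument exploiting the Lipschitz-graph structure as a barrier) then forces $w \equiv 0$, so that $\mathcal{M}_t = R_{\#}\mathcal{M}_t$ for all $t \in (0,T)$.

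To extend beyond $T$, fix any $t_0 \in (0,T)$. By the first stage, $\mathcal{M}_{t_0}$ is a smooth closed hypersurface which is $R$-symmetric and satisfies $\lambda[\mathcal{M}_{t_0}] \le \lambda[\mathcal{C}] < 2$ by Huisken monotonicity. Both $\mathcal{M}$ and $R_{\#}\mathcal{M}$, restricted to $[t_0,\infty)$, are integral, unit-regular, cyclic Brakke flows emanating from this common smooth hypersurface of entropy strictly below $2$. The Hershkovits-White uniqueness theorem for such flows (which uses the entropy bound to exclude multiplicity-$\geq 2$ tangent flows at singular times) then yields $\mathcal{M}_t = R_{\#}\mathcal{M}_t$ for all $t \ge t_0$, completing the reflection symmetry assertion.

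For the final claim that $\mathcal{M}$ is smooth away from $\Pi$, suppose $(p_0,t_0)$ is a spacetime singular point with $p_0 \notin \Pi$; by $R$-symmetry, $(Rp_0,t_0)$ is also singular, with tangent flow the $R$-image of the one at $(p_0,t_0)$. The tangent flow at $(p_0,t_0)$ is a self-shrinker with entropy strictly below $2$, and by Bernstein-Wang's low-entropy classification of self-shrinkers it must be a generalized cylinder. The main obstacle, which I expect to be the most delicate part of the argument, is to promote this pointwise rigidity—combined with the forced pairing of off-axis singularities by $R$ and the smoothness of the flow near $\Pi$ away from the shrinker—into a contradiction, thereby confining the singular set to $\Pi$. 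This step likely requires a careful local analysis blending Brakke regularity with the asymptotic structure of the symmetric flow near a cylindrical singularity.
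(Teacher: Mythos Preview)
Your proposal has a fatal gap at the second stage. You invoke a ``Hershkovits--White uniqueness theorem'' asserting that two integral, unit-regular, cyclic Brakke flows with entropy below $2$ agreeing on a smooth time-slice must agree thereafter. No such theorem is available: uniqueness of Brakke flows past singularities is not known under an entropy bound alone (Hershkovits--White's nonfattening results require mean convexity or related structure, which is absent here). Indeed, if such a uniqueness statement held, the entire paper would reduce to the smooth case handled in the author's earlier work, and the moving-plane machinery of \cite{CHHW} would be unnecessary. Non-uniqueness past singularities is precisely the difficulty this paper is designed to overcome. Note also that $\mathcal{M}_{t_0}$ is not a closed hypersurface---it is noncompact and asymptotically conical---so even the compact uniqueness literature does not apply directly.

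Your third stage is also not a proof. The Bernstein--Wang low-entropy classification of shrinkers requires entropy below $\lambda[\mathbb{S}^{n-1}\times\mathbb{R}]$, not merely below $2$, so you cannot conclude the tangent flow is a generalized cylinder. And even granting cylindrical tangents, a symmetric pair of off-axis neckpinches is not in itself contradictory; you have identified the obstacle but not a mechanism to resolve it.

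The paper's argument is entirely different in structure. It runs the moving-plane method \emph{without smoothness} (following \cite{CHHW}) directly on the possibly singular flow: one defines, for each height $s>0$, the reflected upper part $(\mathcal{M}^{s+})^*$ and shows the set of $s$ for which it lies strictly above $\mathcal{M}^{s-}$ and is graphical is nonempty, open, and closed. The key inputs are a tameness result (using cyclicity and $\lambda<2$ to exclude triple junctions and high multiplicity), a maximum principle and Hopf lemma for tame Brakke flows (\cref{maximum-principles}, \cref{hopf-lemma}), and asymptotic control from pseudolocality. A central lemma (\cref{smoothness}) shows that once $(\mathcal{M}^{s+})^*>\mathcal{M}^{s-}$, the upper part $\mathcal{M}^{s+}$ is automatically smooth; this is what delivers smoothness away from $\Pi$ as a \emph{byproduct} of the method rather than as a separate argument. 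Your reflection-plus-uniqueness strategy never engages with this mechanism.
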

Applying the above to rotationally symmetric double cones we obtain:
\begin{cor}
	\label{main-theorem}
	Let $\mathcal{C} \subset \mathbb{R}^{n+1}$ be a smooth rotationally symmetric double cone with $\lambda[\mathcal{C}] < 2$ (see \cref{preliminaries} for the precise definitions). Let $\mathcal{M} = \{\mu_t\}_{t \in [0,\infty)}$ be an integral, unit-regular and cyclic Brakke flow coming out of $\mathcal{C}$ that is smooth on $(0,T)$. Then $\mathcal{M}$ is rotationally symmetric with the same axis of symmetry as $\mathcal{C}$. Moreover, $\mathcal{M}$ is smooth away from its axis of symmetry. 
\end{cor}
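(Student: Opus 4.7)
The plan is to deduce the corollary from \cref{reflection-symmetry} by applying that theorem to every hyperplane of reflection symmetry of $\mathcal{C}$ containing the rotation axis, and then noting that the corresponding reflections generate the full rotation group.

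Let $\ell$ denote the axis of rotational symmetry of $\mathcal{C}$. Every hyperplane $\Pi$ through the origin containing $\ell$ is a plane of reflection symmetry of $\mathcal{C}$, because rotational invariance about $\ell$ forces invariance under every such reflection. For each such $\Pi$, let $\mathbb{H}$ be either open half-space with $\partial\mathbb{H}=\Pi$. Assuming $\mathcal{C}\cap\mathbb{H}$ is a Lipschitz graph over $\Pi$ in the sense required by \cref{reflection-symmetry}, that theorem yields that $\mathcal{M}$ is invariant under reflection across $\Pi$ and smooth on $\mathbb{R}^{n+1}\setminus\Pi$.

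Since the subgroup of $O(n+1)$ generated by reflections across hyperplanes through the origin containing $\ell$ is precisely the pointwise stabilizer of $\ell$, which contains the full rotation group $SO(n)$ about $\ell$, invariance of $\mathcal{M}$ under all such reflections upgrades to rotational invariance about $\ell$. The smoothness statement follows immediately: $\bigcap_{\Pi\supset\ell}\Pi=\ell$, so $\mathcal{M}$ is smooth on $\bigcup_{\Pi\supset\ell}(\mathbb{R}^{n+1}\setminus\Pi)=\mathbb{R}^{n+1}\setminus\ell$.

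The main technical point, and the step I expect to be hardest, is verifying the Lipschitz graphicality of $\mathcal{C}\cap\mathbb{H}$ over $\Pi$ when $\Pi$ contains $\ell$: along $\ell$ the half-cone meets $\Pi$ tangentially, so the natural graph function has unbounded gradient near $\ell$ and is not globally Lipschitz over $\Pi$. Resolving this requires either a careful reading of the precise graphicality convention of \cref{reflection-symmetry} (for instance a local-in-space interpretation sufficing for the proof given there) or an approximation argument using hyperplanes slightly perturbed off $\ell$ followed by passing to the limit using the continuity of the flow.
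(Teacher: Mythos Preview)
Your approach is exactly the paper's: reduce rotational symmetry to reflection symmetry across each hyperplane containing the axis and invoke \cref{reflection-symmetry}; the paper's proof is two sentences and simply asserts ``$\mathcal{C}$ is a smooth graph over $\{x_{n+1}=0\}$'' before applying the theorem. Your Lipschitz worry is legitimate (note the gradient actually blows up along $\mathcal{C}\cap\Pi$, where the tangent plane to the cone contains $e_{n+1}$, rather than along $\ell$ itself, which meets $\mathcal{C}$ only at the origin), and the paper does not address it; but your first proposed resolution is the right one and is what the paper implicitly relies on: in the proof of \cref{reflection-symmetry} ``graphical'' is defined to mean only that the projection onto $\Pi$ is injective, and the sole use of the graph hypothesis on $\mathcal{C}$ (in \cref{graphicality}) is the statement that $\nu_\Pi$ is not contained in any tangent space of $\mathcal{C}\cap\mathbb{H}$, which does hold at every point of the open half of the rotationally symmetric cone.
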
 
\begin{rem}
	In our previous work \cite{Chen}, we only showed that the flow is rotationally symmetric up to the first singular time $T$, so the point of the current work is to show that the symmetry still holds after any singularity, which, as part of the conclusion, must lie on the axis of symmetry.
\end{rem}
\begin{rem}
	The entropy condition $\lambda[\mathcal{C}] < 2$ is likely redundant. See \cref{further-remarks}.
\end{rem}
In fact, if the cone $\mathcal{C} \subset \mathbb{R}^{n+1}$ is of the form
\begin{align}
	\label{good-cone}
	x_1^2 = m^2\left(x_2^2 + \cdots + x_{n+1}^2\right), \;\; m > 0,
\end{align}
where $m$ is the parameter determined by the cone angle, a much stronger conclusion holds:
\begin{cor}
	\label{cylindrical-singularity}
	Suppose $\mathcal{C}$ is of the form \cref{good-cone} and has entropy $\lambda[\mathcal{C}] < 2$. Suppose $\mathcal{M}$ is an integral, unit-regular and cyclic Brakke flow coming out of $\mathcal{C}$. If $\mathcal{M}$ is smooth on $(0,T)$, then $\mathcal{M}$ is rotationally symmetric across the $x_1$-axis. The only possible singularity model of $\mathcal{M}$ is the round cylinder $\mathbb{R} \times \mathbb{S}^{n-1}$. Moreover, there can be at most one of such singularity, which, if it exists, must occur at the origin. 
\end{cor}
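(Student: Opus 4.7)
The plan combines the two symmetries of the cone \cref{good-cone}---rotational symmetry about the $x_1$-axis and reflection symmetry across the hyperplane $\Pi := \{x_1 = 0\}$---with a tangent-flow classification. Since \cref{good-cone} is rotationally symmetric about the $x_1$-axis, \cref{main-theorem} applies directly and gives that $\mathcal{M}$ is rotationally symmetric about the $x_1$-axis and smooth away from it for all $t \geq 0$. Since \cref{good-cone} is also invariant under $x_1 \mapsto -x_1$, I would then apply \cref{reflection-symmetry} with $\Pi = \{x_1 = 0\}$ to conclude that $\mathcal{M}$ is symmetric across $\Pi$ and smooth away from $\Pi$ for all $t \geq 0$. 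Intersecting the two smoothness statements, the singular set of $\mathcal{M}$ at every time lies in $(\text{$x_1$-axis}) \cap \Pi = \{0\}$, so every singular spacetime point of $\mathcal{M}$ has spatial location at the origin.

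At any such singular point $(0, T_*)$, any tangent flow $\Sigma$ is, by Huisken's monotonicity, a self-shrinker with entropy $\lambda[\Sigma] \leq \lambda[\mathcal{C}] < 2$; moreover $\Sigma$ inherits the rotational symmetry about the $x_1$-axis and the reflection symmetry across $\Pi$ by passing those symmetries through the blow-up. By the Kleene--M{\o}ller classification of embedded rotationally symmetric self-shrinkers, combined with the entropy bound $\lambda[\Sigma] < 2$ (which eliminates, e.g., Angenent's shrinking torus and any multiplicity-$\geq 2$ plane, both of whose entropies are at least $2$), the candidates are reduced to the hyperplane, the round sphere $\mathbb{S}^n$, and the round cylinder $\mathbb{R} \times \mathbb{S}^{n-1}$ with axis along the $x_1$-direction. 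A multiplicity-one hyperplane would force regularity at $(0, T_*)$ by Brakke's $\varepsilon$-regularity, so the hyperplane is excluded.

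The step I expect to be the main obstacle is ruling out the shrinking sphere $\mathbb{S}^n$. The plan is a topological/connectivity argument: a sphere tangent flow at $(0, T_*)$ would force, for $t$ slightly below $T_*$, a small closed compact component of $\mathcal{M}_t$ near the origin, disjoint from the rest of the flow. However, because $\mathcal{M}$ is smooth on $(0, T_*)$ no topological change can occur in that interval, so the number of connected components of $\mathcal{M}_t$ is constant for $t \in (0, T_*)$; since the initial datum is the cone \cref{good-cone}, which has no closed compact component, neither does $\mathcal{M}_t$ for any $t \in (0, T_*)$. This contradicts the existence of the required shrinking bubble, and leaves the round cylinder $\mathbb{R} \times \mathbb{S}^{n-1}$ as the only possibility.

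For the uniqueness of the singularity, I would argue that after the cylindrical neck-pinch at $(0, T_*)$ the flow separates into two pieces, one in each half-space $\{\pm x_1 > 0\}$, so the origin is no longer in the support of $\mathcal{M}_t$ for $t$ just past $T_*$. Each post-singular piece has its tip on the $x_1$-axis moving outward from the origin, consistent with $\mathcal{M}$ being asymptotic to the expanding cone \cref{good-cone} at infinity; combined with the confinement of the singular set to $\{0\}$ at every time (from the first paragraph), this prevents any further singularity from forming. Hence $\mathcal{M}$ has at most one singularity, which, if present, must occur at the origin with cylindrical tangent flow.
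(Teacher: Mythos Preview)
Your overall strategy matches the paper's: apply \cref{main-theorem} for the rotational symmetry, then \cref{reflection-symmetry} with $\Pi=\{x_1=0\}$ to force all singularities to the origin, and finally classify the tangent flow via Kleene--M{\o}ller. Two points deserve comment.

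First, your reduction step asserts that the entropy bound $\lambda[\Sigma]<2$ eliminates the embedded $\mathbb{S}^1\times\mathbb{S}^{n-1}$ (Angenent torus). This is not known; numerical evidence in $\mathbb{R}^3$ in fact suggests its entropy is below $2$. The paper sidesteps this by observing simply that ``$\mathcal{M}$ is not closed'': since $\mathcal{M}$ is smooth on $(0,T_*)$ and asymptotically conical, $\mathcal{M}_t$ has no compact components up to the first singular time, which rules out \emph{every} compact rotationally symmetric shrinker (sphere and torus) at once. Your own connectedness argument for the sphere works equally well for the torus, so you can repair the gap just by applying that argument to both compact candidates rather than invoking entropy for the torus.

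Second, for the ``at most one singularity'' clause, the paper does not argue via post-pinch separation as you do; it cites Colding--Minicozzi \cite{CMUniqueness} for the uniqueness of the cylindrical tangent flow. Your heuristic that after the neck-pinch the origin leaves the support is plausible, but making it rigorous for a Brakke flow requires more than you have written (one needs control on how the flow resolves the cylindrical singularity). If you keep your argument, you should indicate what structural result about cylindrical singularities you are relying on; otherwise, aligning with the paper and citing the uniqueness of cylindrical blow-ups is cleaner.
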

We can also apply \cref{reflection-symmetry} to cones with $O(p + 1) \times O(n-p+1)$ symmetry to prove:
\begin{cor}
	\label{simons-cone-thm}
	Suppose $n \ge 2$ and $1 \le p \le n-1$. Let $\mathcal{C} \subset \mathbb{R}^{n+2}$ be a cone invariant under $O(p+1) \times O(n-p+1)$ with $\lambda[\mathcal{C}] < 2$. Let $\mathcal{M} = \{\mu_t\}_{t \in [0,\infty)}$ be an integral, unit-regular and cyclic Brakke flow coming out of $\mathcal{C}$. If there is $T > 0$ such that $\mathcal{M}$ is smooth on $(0,T)$, then $\mathcal{M}$ inherits the $O(p+1) \times O(n-p+1)$ symmetry (with the same axes of symmetry).
\end{cor}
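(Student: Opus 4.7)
The plan is to reduce to \cref{reflection-symmetry} by decomposing $O(p+1) \times O(n-p+1)$ into a family of hyperplane reflections. Since each orthogonal group $O(k)$ is generated by reflections across codimension-one linear subspaces of $\mathbb{R}^k$, viewing $\mathbb{R}^{n+2} = \mathbb{R}^{p+1} \times \mathbb{R}^{n-p+1}$, the product group is generated by reflections of $\mathbb{R}^{n+2}$ across hyperplanes of the form $\Pi_v := v^\perp \times \mathbb{R}^{n-p+1}$ and $\Pi'_w := \mathbb{R}^{p+1} \times w^\perp$ for unit vectors $v \in \mathbb{R}^{p+1}$ and $w \in \mathbb{R}^{n-p+1}$. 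If I can show that $\mathcal{M}$ is invariant under each such reflection, then it will be invariant under the entire group.

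Fix a hyperplane $\Pi$ of the form above. The cone $\mathcal{C}$ is automatically symmetric across $\Pi$ since the associated reflection lies in $O(p+1) \times O(n-p+1)$, which leaves $\mathcal{C}$ invariant by hypothesis. The remaining hypothesis of \cref{reflection-symmetry} to verify is that $\mathcal{C} \cap \mathbb{H}$ is a Lipschitz graph over $\Pi$ for one of the open half-spaces $\mathbb{H}$ bounded by $\Pi$. The $O(p+1) \times O(n-p+1)$-invariance expresses $\mathcal{C}$ as a union of cones over orbits of the form $(aS^p) \times (bS^{n-p})$ in the unit sphere with $a^2 + b^2 = 1$; because the chosen reflection fixes one entire factor pointwise, a direct computation represents each such component as a graph over its natural domain in $\Pi$, with Lipschitz constant controlled away from the locus where $\mathcal{C}$ meets $\Pi$. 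The entropy bound $\lambda[\mathcal{C}] < 2$ limits the number of these orbits and keeps the graphical description well-posed. Invoking \cref{reflection-symmetry} then yields reflection symmetry of $\mathcal{M}$ across every $\Pi_v$ and every $\Pi'_w$; combining these gives invariance of $\mathcal{M}$ under the full group. The axes of symmetry agree with those of $\mathcal{C}$ since the fixed sets of the two factor subgroups $O(p+1) \times \{I\}$ and $\{I\} \times O(n-p+1)$ are precisely $\{0\} \times \mathbb{R}^{n-p+1}$ and $\mathbb{R}^{p+1} \times \{0\}$.

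The main obstacle is verifying the Lipschitz graph hypothesis of \cref{reflection-symmetry} in this richer-symmetry setting, since a coordinate-type reflection hyperplane meets an $O(p+1) \times O(n-p+1)$-invariant cone along a positive-dimensional set rather than only at the origin as in the rotationally symmetric case of \cref{main-theorem}. I expect this to be the technical crux of the argument, handled via the explicit product-of-spheres description of the orbits together with the low-entropy constraint $\lambda[\mathcal{C}] < 2$; once the graphical hypothesis is in hand, everything else is a formal consequence of \cref{reflection-symmetry} and the reflection-generation of $O(p+1) \times O(n-p+1)$.
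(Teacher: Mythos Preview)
Your proposal is correct and follows essentially the same route as the paper: generate $O(p+1)\times O(n-p+1)$ by reflections across hyperplanes of the form $\Pi_v = v^\perp \times \mathbb{R}^{n-p+1}$ and $\Pi'_w = \mathbb{R}^{p+1}\times w^\perp$, and apply \cref{reflection-symmetry} to each. The paper is simply more explicit at the one point you flag as the crux: writing the cone in coordinates as $\sum_i a_i x_i^2 = \sum_j b_j y_j^2$, it solves directly
\[
x_{p+1} \;=\; a_{p+1}^{-1/2}\Bigl(\textstyle\sum_j b_j y_j^2 - \sum_{i\le p} a_i x_i^2\Bigr)^{1/2}
\]
to exhibit $\mathcal{C}\cap\{x_{p+1}>0\}$ as a graph over $\{x_{p+1}=0\}$, and then invokes \cref{reflection-symmetry}. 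No orbit-counting or separate Lipschitz analysis is carried out.

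One small correction: the entropy bound $\lambda[\mathcal{C}]<2$ is \emph{not} used in the paper to ``limit the number of orbits'' or to make the graphical description well-posed. It enters only as a standing hypothesis of \cref{reflection-symmetry} (where it controls Gaussian densities in the maximum principle and forces tameness). The graphicality over $\Pi$ is purely a consequence of the explicit algebraic form of an $O(p+1)\times O(n-p+1)$-invariant hypercone; you should drop the entropy remark from that step.
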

There are many other related works on the rotational symmetry of self-expanders. Observe that if there is a unique self-expander asymptotic to a rotationally symmetric cone, then it must inherit the rotational symmetry. The first nontrivial result for double cones is obtained by Fong--McGrath \cite{FongMcGrath}. They proved that a mean-convex self-expander (i.e. $H_\Sigma > 0$) asymptotic to a rotationally symmetric double cone is rotationally symmetric. This is later generalized by Bernstein--Wang (Lemma 8.3 in \cite{BWIntegerDegree}) to weakly stable self-expanders. In our previous work \cite{Chen}, we showed in full generality that any smooth self-expander asymptotic to a rotationally symmetric double cone is rotationally symmetric. Rotational symmetry of self-expanding solitons of other geometric flows has been studied by Chodosh \cite{Chodosh} and Chodosh--Fong \cite{ChodoshFong}. \par
Next we briefly comment on some of the assumptions made in \cref{reflection-symmetry}. First of all, the only extra assumption over the smooth case is the entropy bound $\lambda[\mathcal{C}] < 2$. This is due to the complicated nature of singularities of higher multiplicities arising from Brakke flows. In particular, the entropy condition is essential to the maximum principle \cref{maximum-principles} (in which the entropy controls the Gaussian density) and the proof of \cref{tameness}. \par 
Secondly, the cyclicity of $\mathcal{M}$ is needed to ensure singularities modeled on triple junctions do not appear along the flow. Standard Schauder estimates and pseudolocality arguments can only guarantee smoothness outside of a large ball (see for example \cref{interior-estimates}) but do not rule out formations of triple junctions. This condition is explicitly used in \cref{tameness}. In fact, we will show that there exists a self-expander with triple junctions in \cref{ode-appendix}. Our moving plane method will not work for such self-expanders. However, if we assume that $\mathcal{C}$ is also symmetric across the hyperplane perpendicular to its axis of symmetry, then there is still some hope that all self-expanders with triple junctions are rotationally symmetric (See \cref{further-remarks}). \par 
Finally, it is not immediately clear that our theorem contains more flows than \cite{Chen}, which includes \textit{smooth} self-expanders and low entropy flow lines of Bernstein--Wang \cite{BWTopologicalUniqueness}. For this reason we will establish an existence result of a non-self-similar flow coming out of a cone $\mathcal{C}$ with $\lambda[\mathcal{C}] < 2$. We warn the readers that, while the flow constructed below agrees with the construction from \cite{BWTopologicalUniqueness} on the smooth part, they are not necessarily the same flow. This is due to the lack of understanding how expander mean convexity is preserved after the singularity.
\begin{thm}
	\label{nontrivial-flow-lines}
	Let $\mathcal{C} \subset \mathbb{R}^{n+1}$ be a smooth cone with $\lambda[\mathcal{C}] < 2$. Let $\Sigma$ be a self-expander $C^{2,\alpha}$-asymptotic to $\mathcal{C}$. If $\Sigma$ is unstable, there exists a non-self-similar immortal integral Brakke flow $\mathcal{M} = \{\mu_t\}_{t \in (0,\infty)}$ such that 
	\begin{align*}
		\lim_{t \to 0} \mu_t = \mathcal{H}^n \llcorner \mathcal{C}.
	\end{align*}
	Moreover, there is $T > 0$ such that $\mathcal{M}$ is smooth on $(0,T)$.
\end{thm}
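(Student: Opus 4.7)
The strategy is to construct $\mathcal{M}$ as a subsequential limit of Brakke flows starting, at time $t_0 \to 0$, from small perturbations of $\sqrt{t_0}\Sigma$ along the unstable direction of $\Sigma$; the perturbations are sized so as to be negligible at $t=0$ but to reach order one by a fixed positive time.

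Unpacking this: instability of $\Sigma$ gives a positive eigenvalue $\beta > 0$ of the expander Jacobi operator $L_\Sigma$ on $\Sigma$, with smooth, Gaussian-decaying eigenfunction $\psi$. For each small $t_0 > 0$, set $\epsilon(t_0) := t_0^\beta$ and define
\begin{align*}
	\Sigma_{t_0} := \sqrt{t_0}\bigl(\Sigma + \epsilon(t_0)\psi\nu_\Sigma\bigr),
\end{align*}
a smooth embedded hypersurface $C^{2,\alpha}$-asymptotic to $\mathcal{C}$ with entropy less than $2$ for $t_0$ small. Ilmanen's existence theorem then produces an integral, unit-regular, cyclic Brakke flow $\mathcal{M}^{t_0} = \{\mu^{t_0}_t\}_{t \ge t_0}$ with $\mu^{t_0}_{t_0} = \mathcal{H}^n \llcorner \Sigma_{t_0}$.

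I pass to $t_0 \to 0$ in the rescaled frame $\tilde{\mathcal{M}}^{t_0}_\tau$ obtained from $\mathcal{M}^{t_0}$ by the parabolic rescaling $(x, t) \mapsto (x/\sqrt{t}, \log t)$, in which self-expanders are stationary and the rescaled initial datum at $\tau_0 = \log t_0$ is $\Sigma + e^{\beta \tau_0} \psi \nu_\Sigma$. Writing the rescaled flow as a normal graph $u$ over $\Sigma$, the evolution reads $\partial_\tau u = L_\Sigma u + N(u)$ with $N(u) = O(u^2)$, and the linear part is $e^{\beta \tau} \psi$, independent of $\tau_0$. A bootstrap using weighted Schauder estimates and the spectral gap of $L_\Sigma$ above $\beta$ then gives $u(\tau) = e^{\beta \tau} \psi + O(e^{2\beta \tau})$, uniformly in $\tau_0$ on compact subintervals of $(-\infty, 0]$. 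Brakke's compactness theorem yields a subsequential integral Brakke limit $\tilde{\mathcal{M}}$ on all of $\mathbb{R}$, and reversing the rescaling produces $\mathcal{M}$ on $(0, \infty)$.

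The desired properties now follow from this uniform estimate. Coming out of $\mathcal{C}$: $\tilde{\mathcal{M}}_\tau \to \Sigma$ in $C^{2,\alpha}_{\mathrm{loc}}$ as $\tau \to -\infty$, and since $\Sigma$ is asymptotic to $\mathcal{C}$, one obtains $\mu_t \to \mathcal{H}^n \llcorner \mathcal{C}$. Short-time smoothness: for $\tau$ sufficiently negative $\tilde{\mathcal{M}}_\tau$ is a small smooth normal graph over $\Sigma$, giving smoothness on some $(0, T)$. Non-self-similarity: if $\mathcal{M} = \{\sqrt{t}\Sigma'\}$ for some self-expander $\Sigma'$, then $\tilde{\mathcal{M}}_\tau \equiv \Sigma'$; the backward asymptotics force $\Sigma' = \Sigma$, but $\tilde{\mathcal{M}}_0 \approx \Sigma + \psi \nu_\Sigma \neq \Sigma$, a contradiction. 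The main technical obstacle is the uniform-in-$\tau_0$ control of the nonlinear rescaled evolution, which amounts to quantifying the approach of the orbit to the unstable manifold of $\Sigma$ at rescaled time $-\infty$; the entropy bound $\lambda[\mathcal{C}] < 2$ enters here via uniform Gaussian density bounds on the $\mathcal{M}^{t_0}$, ensuring compactness in the relevant weighted function spaces.
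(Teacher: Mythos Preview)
Your strategy is essentially the paper's: perturb $\Sigma$ along the first eigenfunction, pass to the rescaled (logarithmic-time) frame in which $\Sigma$ is stationary, arrange for the perturbation to be of unit size at a fixed rescaled time, and extract a Brakke-compactness limit whose backward limit is $\Sigma$ and hence $\mathcal{C}$. The difference is in how the ``unit size at a fixed time'' normalization is achieved, and this is worth noting because the paper's device is considerably softer than yours.

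You tune the perturbation size a priori, setting $\epsilon(t_0)=t_0^{\beta}$ so that the linearized solution is exactly $e^{\beta\tau}\psi$ and hence equals $\psi$ at $\tau=0$ regardless of $\tau_0$. This forces you to prove the uniform nonlinear expansion $u(\tau)=e^{\beta\tau}\psi+O(e^{2\beta\tau})$ all the way up to $\tau=0$, where $u$ is order one and the graphical regime is borderline; as you acknowledge, this requires weighted parabolic Schauder theory on the noncompact $\Sigma$ and a genuine bootstrap. The paper instead fixes $\varepsilon$, runs the rescaled flow forward until it first reaches distance $\gamma$ from a fixed $x_0\in\Sigma$ at some time $s_\varepsilon$, and time-translates by $-s_\varepsilon$. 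The only quantitative input is $s_\varepsilon\to\infty$ as $\varepsilon\to 0$, which follows softly from smooth dependence on initial data (the $\varepsilon\to 0$ limit is the static flow of $\Sigma$). Non-self-similarity is then immediate from $d(\supp\tilde\mu_0,x_0)=\gamma$, and the backward limit is $\Sigma$ because the translated initial times go to $-\infty$. No expansion of $u$ is needed.

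One further point: you invoke ``Ilmanen's existence theorem'' to produce $\mathcal{M}^{t_0}$, but Ilmanen's elliptic regularization is formulated for compact initial data. The paper addresses this via White's Brakke flow with boundary on $\overline{B_R}$, then sends $R\to\infty$ and uses the entropy bound $\lambda<2$ together with \cref{limit-matching-motion} to guarantee the limit is matching. Your $\Sigma_{t_0}$ has bounded geometry, so one can alternatively appeal to Ecker--Huisken short-time existence and then extend, but this step should be justified rather than cited.
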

\begin{rem}
	This existence theorem is weaker than the one obtained in \cite{BCW} in which no entropy assumption is made, but is enough for our purposes. Thanks to the entropy bound, it is much simpler to analyze our situation as the flows we produce are automatically matching. A similar construction for self-shrinkers is carried out by Chodosh--Choi--Mantoulidis--Schulze \cite{CCMSGeneric}. 
\end{rem}
\begin{rem}
	Without the more restrictive entropy bound, it is not enough to produce the flow line as a smooth MCF --- Some unstable connected self-expanders under the flow will necessarily disconnect in order to reach the stable double-disk solution. See \cref{cylindrical-singularity} and \cref{disconnection}. 
\end{rem}

Let us now briefly discuss the proof of \cref{reflection-symmetry}. As in \cite{Chen}, the proof is based on the moving plane method, first used by Alexandrov to prove that embedded, compact CMC hypersurfaces are round spheres. The method was then employed by Gidas--Ni--Nirenberg \cite{GNNSymmetry} who proved radial symmetry of positive solutions to the Yamabe problem, and by Schoen \cite{Schoen} who proved the uniqueness of catenoids. Recently, there are a number of other results on MCF that utilize the moving plane method, including \cite{MSS} and \cite{CHHAncient}. Unfortunately since we are dealing with potentially singular flows, these methods for smooth flows are no longer sufficient. Instead we will use a novel variant without smoothness, recently developed by \cite{CHHW} (for parabolic equations) and \cite{HHW} (for elliptic equations). The key ingredient in the proof is the Hopf lemma without smoothness \cref{hopf-lemma}, which allows us to upgrade regularity of the flow concurrently with the symmetry. \par 
In order to apply the (geometric) moving plane method to noncompact objects, it is mandatory to have a well-controlled asymptote at infinity. In the asymptotically cylindrical case \cite{CHHW} (see also the preceding \cite{CHHAncient}), a fine neck analysis is carried out in order to determine the asymptotic expansion near infinity. However, to our advantage, in the asymptotically conical case, the asymptote near infinity is entirely determined by the given cone $\mathcal{C}$, and it is an immediate consequence of the pseudolocality theorem that our flows are nice $C^{2,\alpha}$ normal graphs over $\mathcal{C}$ outside of a large ball with appropriate decay rates (see \cref{smooth-construction-appendix} or the works of Bernstein--Wang \cite{BWSmoothCompactness} or \cite{BWTopology}). Knowing this, we can then carry out the moving plane method with the usual maximum principle and Hopf lemma replaced by \cref{maximum-principles} and \cref{hopf-lemma} respectively (barring some technicality, e.g. the tameness assumption \cref{tameness}). \par 
We will prove \cref{reflection-symmetry} in \cref{reflection-symmetry-section}. In \cref{rotational-symmetry-section} we apply \cref{reflection-symmetry} to prove the various corollaries mentioned above. In \cref{construction-of-matching-motion} we give a construction for  \cref{nontrivial-flow-lines}. In \cref{ode-appendix} we will prove an ODE existence result of self-expanders with triple junctions, which shows that the cyclic assumption in \cref{reflection-symmetry} is indeed necessary. In \cref{smooth-construction-appendix} we review the construction of smooth Morse flow line following \cite{BWTopologicalUniqueness}. Finally in \cref{maximum-principles-appendix} we recall the key maximum principle and Hopf lemma from Section 3 of \cite{CHHW}. \par 
\subsection*{Acknowledgment}
I would like to thank my advisor, Jacob Bernstein, for many helpful comments on the paper and continuous encouragement. I would also like to thank Junfu Yao for helpful conversations, Kyeongsu Choi and Or Hershkovits for explaining some arguments in \cite{CHHW}, and the referee for many constructive comments on the first draft of this article.

\section{Preliminaries}
\label{preliminaries}
\subsection{Notations} Throughout the paper lower case letters such as $x$ denote points in $\mathbb{R}^{n+1}$, while upper case letters such as $X$ denote points in the spacetime $\mathbb{R}^{n+1} \times [0,\infty)$. $B_r(x)$ denotes the Euclidean ball of radius $r$ centered at $x$, and $P(X,r)$ denotes the parabolic ball centered at $X = (x,t)$ of radius $r$, i.e.
\begin{align*}
	P(X,r) = B_r(x) \times (t - r^2,t].
\end{align*}
$\mathcal{T}_r(A)$ denotes the tubular neighborhood of $A \subset \mathbb{R}^{n+1}$ of radius $r$. Finally, for $x = (x', x_{n+1})$, let 
\begin{align*}
	B_r^n(x) = \{(y',y_{n+1}) \in \mathbb{R}^{n+1} \mid \abs{y' - x'} < r, y_{n+1} = x_{n+1}\},
\end{align*}
and $C_r(x)$ be the open cylinder of height $r$ over $B_r^n(x)$, i.e.
\begin{align*}
	C_r(x) = \{(y',y_{n+1}) \in \mathbb{R}^{n+1} \mid \abs{x' - y'} < r, \abs{x_{n+1} - y_{n+1}} < r\}.
\end{align*}
\par 
By a \textit{(hyper)cone} we mean a set $\mathcal{C} \subset \mathbb{R}^{n+1}$ that is invariant under dilation, i.e. $\rho \mathcal{C} = \mathcal{C}$ for all $\rho > 0$. The \textit{link} of $\mathcal{C}$ is $\mathcal{L}(\mathcal{C}) = \mathcal{C} \cap \mathbb{S}^n$. We say $\mathcal{C}$ is smooth if $\mathcal{L}(\mathcal{C})$ is a smooth hypersurface of $\mathbb{S}^n$. A double cone is a cone $\mathcal{C}$ whose link $\mathcal{L}(\mathcal{C})$ has two connected components lying in opposite hemispheres of $\mathbb{S}^n$. A hypersurface $\Sigma$ is \textit{$C^{k,\alpha}$-asymptotically conical} to $\mathcal{C}$ if 
\begin{align*}
	\lim_{\rho \to 0^+} \rho \Sigma = \mathcal{C} \text{ in } C^{k,\alpha}_{loc}(\mathbb{R}^{n+1} \setminus \{0\}).
\end{align*}
We will simply say $\Sigma$ is asymptotically conical to $\mathcal{C}$ if it is smoothly asymptotically conical. In our applications, the cone $\mathcal{C}$ is almost always assumed to be smooth and the asymptotically conical hypersurfaces will come from solutions to MCF, which are in fact smoothly asymptotic to $\mathcal{C}$ by standard Schauder theory. \par 
Given a hypersurface $\Sigma \subset \mathbb{R}^{n+1}$, the \textit{Gaussian surface area} of $\Sigma$ is 
\begin{align*}
	F[\Sigma] = \int_\Sigma e^{-\frac{\abs{x}^2}{4}} d\mathcal{H}^n.
\end{align*}
Following Colding--Minicozzi \cite{CMGeneric}, the \textit{entropy} of $\Sigma$ is 
\begin{align*}
	\lambda[\Sigma] = \sup_{\rho \in \mathbb{R}^+, x_0 \in \mathbb{R}^{n+1}} F[\rho \Sigma + x_0] = \sup_{\rho, x_0} \frac{1}{(4\pi \rho)^{n/2}} \int_{\Sigma}  e^{-\frac{\abs{x-x_0}^2}{4\rho}} d\mathcal{H}^n.
\end{align*}
By Huisken's monotonicity formula, the entropy is nonincreasing along a MCF.

\subsection{Integral Brakke flows} 
\label{brakkeflow}
For the rest of the article we will be using notations from geometric measure theory. We refer to \cite{SimonBook} and \cite{Ilmanen} for the relevant definitions. \par
Since we are dealing with potentially singular MCF, we need to generalize the classes of MCF to make sense of the flow and hence the symmetries past singularities. The measure-theoretic generalization of MCF is the Brakke flow \cite{Brakke} (see also the more recent book of Tonegawa \cite{Tonegawa}), which is a flow of varifolds. Given an integral $n$-rectifiable Radon measure $\mu$, let $V(\mu)$ denote the associated integral varifold, and $H$ its generalized mean curvature vector given by the formula:
\begin{align*}
	\int \Div_{V(\mu)} X d\mu = - \int H \cdot X d\mu.
\end{align*}
where $X$ is a compactly supported $C^1$ vector field. \par 
Given an open set $U \subset \mathbb{R}^{n+1}$, by an \textit{integral $n$-Brakke flow} in $U$ we mean a family of integral $n$-rectifiable Radon measures $\mathcal{M} = \{\mu_t\}_{t \in I}$ such that:
\begin{enumerate}[label = (\alph*)]
	\item For a.e. $t \in I$, $V(\mu)$ has locally bounded first variation and its generalized mean curvature vector $H$ is orthogonal to the approximating tangent space of $V(\mu)$ $x$-a.e.
	\item For any bounded interval $[a,b] \subset I$ and compact set $K \subset U$,
	\begin{align*}
		\int_a^b \int_K (1 + \abs{H}^2) d\mu_tdt < \infty.
	\end{align*}
	\item For $[a,b] \subset I$ and every $\phi \in C_c^1(U \times [a,b], \mathbb{R}^+)$,
	\begin{align*}
		\int \phi d\mu_b - \int \phi d\mu_a \le \int_a^b \int \left(-\phi\abs{H}^2 + \abs{H} \cdot \nabla \phi + \frac{\partial \phi}{\partial t}\right)d\mu_tdt.
	\end{align*}
\end{enumerate}
Since we are working in codimension one we will drop the dependence on $n$ in the definition above and simply refer to it as a Brakke flow. \par 
Brakke flow has two main drawbacks. First, due to the inequality in condition (c), a Brakke flow can vanish abruptly (in fact some evolution must involve such vanishing). In order to avoid this technical difficulty in \cref{construction-of-matching-motion} we will employ the notion of a matching flow which, in some sense, prevents certain sudden loss of mass. Secondly, and as part of our motivation, Brakke flow does not have to be unique. In fact, there could be multiple (smooth) self-expanders coming out of a fixed cone, which is singular at the origin. \par 

A Brakke flow $\mathcal{M}$ is \textit{unit-regular} if, for a spacetime point $X = (x,t)$, $\mathcal{M}$ is smooth and has no sudden mass loss if a tangent flow at $X$ is a multiplicity one hyperplane. We say $\mathcal{M}$ is \textit{cyclic} if the associated mod-2 flat chain $[V(\mu_t)]$ (see eg. \cite{WhiteCurrentsChains}) has no boundary. By works of Ilmanen \cite{Ilmanen}, Brakke flows  produced by elliptic regularization are unit-regular and cyclic. \par 
Throughout our presentation, given a Brakke flow $\mathcal{M} = \{\mu_t\}_{t \in I}$, we will write $\mathcal{M}_t = \supp \mu_t$ for $t \in I$. 

\section{Reflection symmetry}
\label{reflection-symmetry-section}
In this section we prove \cref{reflection-symmetry}. Fix a smooth double cone $\mathcal{C}$ with $\lambda[\mathcal{C}] < 2$ and an integral, unit-regular Brakke flow $\mathcal{M} = \{\mu_t\}$ satisfying the assumptions of \cref{reflection-symmetry}; that is, 
\begin{align*}
	\lim_{t \to 0} \mu_t = \mathcal{H}^n \llcorner \mathcal{C},
\end{align*}
and $\mathcal{M}$ is smooth on $(0,T)$ for some $T > 0$. Given a hyperplane $\Pi$, if $\mathcal{C} \cap \mathbb{H}$ is graphical over $\Pi$ and symmetric across $\Pi$, the main theorem in our previous work \cite{Chen} implies that $\mathcal{M}$ is  symmetric across $\Pi$ until its first singular time, past which the usual moving plane argument stops to work. For this reason, we will prove \cref{reflection-symmetry} using a version of the moving plane method without assuming smoothness, recently developed by \cite{CHHW} (see also \cite{HHW}). \par 
A technical ingredient we need for the moving plane method is the notion of tameness, which we now define.

\begin{defn}[Definition 3.1 in \cite{CHHW}]
	\label{tameness-defn}
	For an integral Brakke flow $\mathcal{M}$ in $\mathbb{R}^{n+1}$, we say $X \in \mathcal{M}$ is a tame point of the flow if the $-1$ time slice of every tangent flow at $X$ is smooth with multiplicity one away from a singular set $\mathcal{S}$ with $\mathcal{H}^{n-1}(\mathcal{S}) = 0$. We say $\mathcal{M}$ is a tame flow if every point $X \in \mathcal{M}$ is a tame point.
\end{defn}
For instance, a tame flow should not have a singularity modeled on a triple junction (that is, three hyperplanes meeting at equal angles) or a multiplicity two plane. Tameness is a key assumption to apply the Hopf lemma without smoothness \cref{hopf-lemma}, which, in turn, is crucial to the moving plane method. The next proposition establishes tameness of $\mathcal{M}$.
\begin{prop}
	\label{tameness}
	Let $\mathcal{M}$ be as above. Then $\mathcal{M}$ is a tame flow.
\end{prop}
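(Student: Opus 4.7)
The plan is to verify tameness point-by-point on $X \in \mathcal{M}$, using the entropy bound $\lambda[\mathcal{C}] < 2$ together with unit-regularity and cyclicity to rule out the two obstructive tangent flow models (multiplicity-two hyperplanes and triple junctions), and then invoking stratification to bound the remaining singular set in the shrinker time slice.

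First, by Ilmanen's compactness any tangent flow $\hat{\mathcal{M}}$ at $X$ is self-similar and integral, and inherits unit-regularity and cyclicity from $\mathcal{M}$; by Huisken monotonicity $\lambda[\hat{\mathcal{M}}] \le \lambda[\mathcal{M}] \le \lambda[\mathcal{C}] < 2$. If $X$ lies in the smooth part of $\mathcal{M}$, every tangent flow is a multiplicity-one hyperplane and tameness is immediate. If $X = (0,0)$, the tangent flow is modelled on a self-expander coming out of $\mathcal{C}$, whose spatial slices are smooth with multiplicity one away from the single point $\{0\}$ of zero $\mathcal{H}^{n-1}$-measure, so tameness again holds. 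The only remaining case is a singular interior point.

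At such $X = (x_0, t_0)$ with $t_0 > 0$, set $\Sigma := \hat{\mathcal{M}}_{-1}$ with singular set $\mathcal{S} \subset \Sigma$. The Gaussian density at any $y \in \Sigma$ is at most $\lambda[\hat{\mathcal{M}}] < 2$, so no further tangent flow of $\hat{\mathcal{M}}$ at $(y,-1)$ can be a hyperplane of multiplicity $\ge 2$; cyclicity, which descends to $\hat{\mathcal{M}}$, excludes triple-junction tangents. By unit-regularity, $y \notin \mathcal{S}$ precisely when some tangent flow at $(y,-1)$ is a multiplicity-one plane. To bound $\mathcal{S}$, I would invoke the quantitative stratification theory for unit-regular integral Brakke flows (in the spirit of Cheeger--Haslhofer--Naber and Naber--Valtorta): under our exclusions the spacetime singular set of $\hat{\mathcal{M}}$ has parabolic Hausdorff dimension at most $n-1$. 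Since $\hat{\mathcal{M}}$ is self-similar, the parameterization $(y,t) \mapsto (\sqrt{-t}\, y, t)$ identifies this spacetime set with $\mathcal{S} \times \mathbb{R}_{<0}$, whose parabolic Hausdorff dimension equals $\dim_H \mathcal{S} + 2$. This forces $\dim_H \mathcal{S} \le n-3$, hence $\mathcal{H}^{n-1}(\mathcal{S}) = 0$.

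The hard part is securing the dimension estimate on $\mathcal{S}$ under the present hypotheses, since an arbitrary integral Brakke flow could a priori have singular set of parabolic dimension as large as $n$. What saves us is that $\lambda[\mathcal{C}] < 2$ together with cyclicity eliminate exactly the two tangent-flow models (multiplicity-two planes and triple junctions) that would obstruct the drop from $n$ to $n-1$ in the stratification. A more self-contained alternative is direct dimensional reduction: a point of the top stratum of $\mathcal{S}$ would yield a tangent flow of the form $\mathbb{R}^{n-1} \times Y$ for a one-dimensional shrinker $Y \subset \mathbb{R}^2$ of entropy below $2$; the only such $Y$ are lines (excluded since $y \in \mathcal{S}$) or round circles (producing a smooth cylindrical tangent flow, incompatible with $y$ being genuinely singular), closing the argument.
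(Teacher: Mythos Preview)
Your alternative ``direct dimensional reduction'' is essentially the paper's argument, but you misidentify the cross-section $Y$. Since $\hat{\mathcal{M}}$ is self-shrinking, $\Sigma = \hat{\mathcal{M}}_{-1}$ is minimal in the (conformally Euclidean) Gaussian metric, so any tangent cone to $\Sigma$ at $y$ is a Euclidean \emph{stationary} cone, and the iterated tangent flow of $\hat{\mathcal{M}}$ at $(y,-1)$ is the \emph{static} flow of that cone. Thus when this cone splits off $\mathbb{R}^{n-1}$, the factor $Y$ is a one-dimensional stationary cone in $\mathbb{R}^2$ (a union of half-rays), not a one-dimensional self-shrinker; round circles never appear here, and your exclusion of them is beside the point. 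The correct dichotomy is: entropy $<2$ forces $Y$ to have at most three rays, and cyclicity (via \cite{WhiteCurrentsChains}) rules out the triple junction, leaving $Y$ a single line and the point regular. This is precisely how the paper argues, separately for the static and non-flat-shrinker cases, yielding $\dim_H \mathcal{S}(\nu_{-1})\le n-2$ directly.

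Your main route through parabolic stratification is more circuitous and overclaims the bound. Excluding only multiplicity-two planes and $\mathbb{R}^{n-1}\times(\text{triple junction})$ rules out tangent flows of spine $\ge n+1$, which by White's stratification gives $\dim_P(\operatorname{sing}\hat{\mathcal{M}})\le n$, not $\le n-1$; to reach $n-1$ you would also have to rule out static cones of the form $\mathbb{R}^{n-2}\times C'$ with $C'$ a nontrivial $2$-dimensional stationary cone in $\mathbb{R}^3$, which you do not address. (The weaker bound $\le n$ would, after your self-similarity conversion, still give $\dim_H\mathcal{S}\le n-2$, which suffices---but then the detour through parabolic dimension buys nothing over the direct varifold argument.) The paper avoids all of this spine-counting by working with $\nu_{-1}$ as a varifold and applying Federer dimension reduction; this is both shorter and conceptually cleaner.
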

\begin{proof}
	It suffices to check the definition. Let $\mathcal{X} = \{\nu_t\}_{t \in (-\infty,0]}$ be a tangent flow at $(x_0,t_0)$. Since $\lambda[\mathcal{C}] < 2$, $\mathcal{X}$ has multiplicity 1 (i.e. the Gaussian density is 1 $\mathcal{H}^{n-1}$-a.e. on $\mathcal{X}$, $t$-a.e.). \par 
	If $\mathcal{X}$ is static or quasi-static, then $\nu_{-1} = \mathcal{H}^n \llcorner \Gamma$ for some stationary cone $\Gamma$. If $\Gamma$ splits off $(n-2)$-lines, then $\nu_{-1} = \mu_{\mathbb{R}^{n-2}} \times \nu'$ where $\nu'$ is a one-dimensional stationary cone in $\mathbb{R}^2$. Hence $\nu'$ is a union of half-rays. Since $\lambda[\nu'] = \lambda[\nu_{-1}] < 2$, there are at most 3 rays. Since $\mathcal{M}$ is cyclic, $\nu'$ cannot be 3 rays meeting at the origin. This is because the triple junction is not cyclic, and a cyclic Brakke flow cannot have a singularity modeled on a non-cyclic singularity by \cite{WhiteCurrentsChains}. Therefore $\nu'$ consists of 2 lines and in fact $\nu_{-1}$ is smooth. So any singular cone $\nu_{-1}$ can split off at most $(n-3)$-lines, and consequently the singular part of $\nu_{-1}$ has codimension at least 3. \par 
	If $\mathcal{X}$ is a non-flat self-shrinker, then any tangent cone $\nu'$ to $\nu$ is a stationary cone with entropy at most 2 (here we used the fact that self-shrinkers are minimal surfaces with respect to the metric $g_{ij} = e^{-\frac{\abs{x}^2}{2n}}\delta_{ij}$ which is conformal to the Euclidean metric). It follows from the above discussion that $\nu'$ can split off at most $(n-3)$-lines (if $\nu'$ splits of $(n-2)$-lines then it is a multiplicity 1 hyperplane, which, by Allard regularity theorem, means that $\mathcal{S}(\mathcal{X}) \subset \{0\}$ and consequently $\mathcal{X}$ is a multiplicity 1 hyperplane), and so $\nu_{-1}$ is smooth away from a set of Hausdorff dimension at most $(n-2)$. Thus $\mathcal{M}$ is tame.
\end{proof}
\begin{rem} With a more restrictive entropy bound it is possible to refine the codimension of the singular set even more. See \cite{BWSharpLower} or Section 4 of \cite{CHHW}.
\end{rem}
Next we establish some properties of $\mathcal{M}$. The next proposition says that the flow $\mathcal{M}$ stays asymptotically conical for all future time.
\begin{prop}
	\label{asymptotic-conical}
	Let $\mathcal{M}$ be as above. Then $\mathcal{M}$ is asymptotically conical to $\mathcal{C}$ for all $t \in (0,\infty)$. Consequently, for every $t > 0$ there is $R = R(t)$ such that $\mathcal{M}_t \setminus B_R(0)$ is a smooth MCF.
\end{prop}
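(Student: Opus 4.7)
The plan is to combine the dilation invariance of the initial data with a pseudolocality estimate at infinity. Because $\mathcal{C}$ is dilation invariant, for every $\rho > 0$ the parabolically rescaled family $\mathcal{M}^\rho = \{\rho^{-1}\mu_{\rho^2 s}\}_{s \geq 0}$ is again an integral, unit-regular, cyclic Brakke flow with the same initial datum $\mathcal{H}^n \llcorner \mathcal{C}$, the same entropy $\lambda[\mathcal{M}^\rho] \leq \lambda[\mathcal{C}] < 2$, and is still smooth on an initial time interval. I would then apply a pseudolocality estimate for Brakke flows coming out of smooth asymptotically conical initial data, in the spirit of \cite{BWSmoothCompactness} (see also \cref{smooth-construction-appendix}). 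Because $\mathcal{C} \cap (B_2(0) \setminus B_1(0))$ is a fixed smooth hypersurface with a priori bounded geometry, this yields constants $\epsilon_0, r_0 > 0$ depending only on $\mathcal{C}$ such that, uniformly in $\rho$, the slice $\mathcal{M}^\rho_s$ restricted to $B_2 \setminus B_1$ is a smooth normal graph over $\mathcal{C}$ of $C^{2,\alpha}$-norm at most $r_0$ for $s \in (0,\epsilon_0)$, and whose graph function tends to $0$ in $C^{2,\alpha}$ as $s \to 0^+$.

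Undoing the rescaling, this says that $\mathcal{M}_t$ is smooth on the annulus $B_{2\rho}(0) \setminus B_\rho(0)$ and a $C^{2,\alpha}$ normal graph over $\mathcal{C}$ there whenever $t < \epsilon_0 \rho^2$. For fixed $t > 0$, applying this with $\rho$ ranging over a dyadic sequence $\rho_k = 2^k \sqrt{t/\epsilon_0}$ covers $\mathbb{R}^{n+1} \setminus B_{R(t)}(0)$ with $R(t) = \sqrt{t/\epsilon_0}$, and together with standard interior parabolic Schauder estimates this yields the second assertion that $\mathcal{M}_t \setminus B_{R(t)}(0)$ is a smooth MCF. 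For the asymptotic conicality at time $t$, I would use the identity $r\mathcal{M}_t = \mathcal{M}^{1/r}_{r^2 t}$: applying the same pseudolocality estimate to $\mathcal{M}^{1/r}$ in the fixed annulus $B_2 \setminus B_1$ at the time $s = r^2 t \to 0^+$, the $C^{2,\alpha}$ graph function over $\mathcal{C}$ goes to $0$ uniformly in $r$. Dilating the annulus then gives $r\mathcal{M}_t \to \mathcal{C}$ in $C^\infty_{\mathrm{loc}}(\mathbb{R}^{n+1}\setminus\{0\})$ as $r \to 0^+$, which is precisely the asymptotic conicality of $\mathcal{M}_t$ to $\mathcal{C}$.

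The only real technical point is the pseudolocality input: one must know that the estimate for flows coming out of $\mathcal{C}$ is genuinely \emph{uniform} across all the parabolic rescalings $\mathcal{M}^\rho$, rather than depending on the particular flow $\mathcal{M}$. The entropy bound $\lambda[\mathcal{C}] < 2$ is what makes this possible, as it prevents higher-multiplicity limits in any Brakke subsequential limit coming out of $\mathcal{C}$ and so allows the pseudolocality constants to be chosen depending only on $\mathcal{C}$. Once this uniform pseudolocality is granted, the rest is the scaling and covering argument sketched above.
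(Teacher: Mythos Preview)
Your argument is correct and is precisely what the paper's one-line proof (pseudolocality for Brakke flows plus parabolic Schauder estimates, as in \cref{interior-estimates} and Proposition~4.4 of \cite{BWTopology}) unpacks to. One minor point: the uniformity of the pseudolocality constants across the rescalings $\mathcal{M}^\rho$ does not actually require the entropy bound $\lambda[\mathcal{C}]<2$, since the pseudolocality theorem of \cite{IlmanenNevesSchulze} gives constants depending only on the local geometry of the initial data, and every $\mathcal{M}^\rho$ shares the \emph{same} initial datum $\mathcal{C}$.
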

\begin{proof}
	This follows from pseudolocality for Brakke flows and parabolic Schauder estimates as in \cref{interior-estimates} (see also Proposition 4.4 of \cite{BWTopology}). 
\end{proof}
Fix an open half space $\mathbb{H}$ and let $\Pi = \partial \mathbb{H}$. We now use the pseudolocality theorem to prove that if $\mathcal{C}$ is graphical over $\Pi$, then so is $\mathcal{M}$ outside of a large compact set. This will serve as an asymptotic expansion at infinity and be upgraded into interior graphicality via the moving plane method. For the next two lemmas we write $\mathcal{M}_t^+ = \mathcal{M}_t \cap \mathbb{H}$.
\begin{lem}
\label{asymptotic-expansion}
	Suppose $\mathcal{C} \cap \mathbb{H}$ is a Lipschitz graph over $\Pi$.  let $\mathcal{M}$ be as above, then for every $t > 0$ there is $R = R(t)$ and a smooth function $u$ on $\mathcal{C}$ such that 
	\begin{align*}
		\mathcal{M}_t^+ \setminus B_R(0) \subset \{p + u(p)\nu_\mathcal{C}(p) \mid p \in \mathcal{C}\}. 
	\end{align*}
	and $\abs{u(p)} \le C \abs{p}^{-1}$ for $p \in \mathcal{C}$ for some constant $C = C(t)$. 
\end{lem}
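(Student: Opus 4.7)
The plan is to combine the pseudolocality theorem for Brakke flows with a parabolic rescaling argument. \cref{asymptotic-conical} already gives that $\mathcal{M}_t$ is smooth and asymptotically conical to $\mathcal{C}$ outside some large ball $B_{R_0}(0)$, so one immediately obtains $\mathcal{M}_t^+ \setminus B_R(0)$ as a smooth normal graph over $\mathcal{C}$ for $R=R(t)$ large enough; the content of the lemma is to sharpen the qualitative bound $u(p) = o(|p|)$ that comes from asymptotic conicality to the quantitative decay $|u(p)| \le C|p|^{-1}$.

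To obtain the decay, I would fix $t > 0$, pick a point $p \in \mathcal{C} \cap \mathbb{H}$ with $\rho = |p|$ large, and consider the parabolic rescaling
\begin{align*}
	\tilde{\mathcal{M}}_s = \rho^{-1}\mathcal{M}_{\rho^2 s}, \quad s \in [0, t/\rho^2].
\end{align*}
Because $\mathcal{C}$ is dilation-invariant, $\tilde{\mathcal{M}}$ is an integral, unit-regular, cyclic Brakke flow coming out of the same cone $\mathcal{C}$. At the rescaled point $\tilde p = p/\rho \in \mathcal{C} \cap \mathbb{S}^n$, the cone $\mathcal{C}$ is a uniformly smooth hypersurface on $B_{1/4}(\tilde p)$, with $C^{k,\alpha}$ norms that are bounded independent of $\rho$ by smoothness of the link $\mathcal{L}(\mathcal{C})$. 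Applying the Brakke flow pseudolocality argument exactly as in the proof of \cref{asymptotic-conical} (cf.\ Proposition 4.4 of \cite{BWTopology} and the parabolic interior estimates alluded to there) produces an $s_0 > 0$, depending only on $\mathcal{C}$ and $\lambda[\mathcal{C}]$, such that for $s \in (0, s_0]$ the flow $\tilde{\mathcal{M}}_s$ is a smooth $C^{2,\alpha}$ normal graph over $\mathcal{C}$ on $B_{1/8}(\tilde p)$ with graph function $\tilde u_s$ and uniform bounds on $\tilde u_s$ together with its spatial derivatives.

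The function $\tilde u_s$ satisfies a quasilinear parabolic equation of the form
\begin{align*}
	\partial_s \tilde u = \Delta_\mathcal{C} \tilde u + H_\mathcal{C} + Q(\tilde u, \nabla \tilde u, \nabla^2 \tilde u)
\end{align*}
on a neighborhood of $\tilde p$, with initial condition $\tilde u_0 \equiv 0$ and with both the source term $H_\mathcal{C}$ and the coefficients of $Q$ uniformly bounded on the unit-scale portion of $\mathcal{C}$ under consideration. Standard parabolic Schauder/Duhamel estimates then give $|\tilde u_s(\tilde p)| \le C s$ for all $s \in (0, s_0]$. Unrescaling with $s = t/\rho^2$ yields
\begin{align*}
	|u(p)| = \rho\,|\tilde u_s(\tilde p)| \le C \rho s = \frac{Ct}{|p|},
\end{align*}
valid whenever $|p| \ge \sqrt{t/s_0}$, which is the advertised decay with $C = C(t)$. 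The only substantive point to check is that the pseudolocality constants can be chosen uniformly in $\rho$; this is immediate because after rescaling the initial data is the same fixed cone $\mathcal{C}$ viewed on the unit scale, so all relevant constants depend only on $\mathcal{C}$ and $\lambda[\mathcal{C}]$. Defining $R(t)$ to be the maximum of $\sqrt{t/s_0}$ and the $R_0$ from \cref{asymptotic-conical} completes the proof.
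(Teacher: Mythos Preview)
Your argument is correct, and the graphicality step (invoking \cref{asymptotic-conical} together with pseudolocality and a parabolic rescale) matches the paper's. The decay estimate, however, is obtained by a genuinely different mechanism.

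The paper does not touch the graph PDE at all for the decay. Instead it invokes the sphere-barrier argument of \cref{distance-estimate}: since $\mathcal{M}_0 = \mathcal{C}$, one places small shrinking balls in the complement of $\mathcal{C}$ and uses the avoidance principle for Brakke flows to get, for all $R>1$,
\[
\mathcal{M}_{t_0}\setminus B_{NR\sqrt{t_0+1}}(0)\subset \mathcal{T}_{R^{-1}\sqrt{t_0+1}}(\mathcal{C}),
\]
which after a change of variables is exactly $|u(p)|\le C|p|^{-1}$. Your route instead rescales by $\rho=|p|$, observes that the rescaled graph function $\tilde u_s$ starts at zero and, by the uniform $C^{2,\alpha}$ control coming from pseudolocality plus interior Schauder estimates, has $|\partial_s \tilde u|\le C$; integrating in $s$ and unrescaling gives the same $Ct/|p|$ bound. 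The barrier proof is more geometric and applies verbatim at the level of supports of Brakke flows (no smoothness needed for the containment statement itself), while your PDE proof is cleaner once smoothness is in hand and makes the dependence $C=O(t)$ transparent. Either is fine here; just be aware that what you called ``Duhamel estimates'' is really only the elementary observation $|\tilde u_s|\le s\sup|\partial_s\tilde u|$, which your uniform Schauder bounds already justify.
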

\begin{proof}
	Fix a time $t_0$. Let us first show that $\mathcal{M}_t^+ \setminus B_R(0)$ can be written as a smooth normal graph over $\mathcal{C}$.  By \cref{asymptotic-conical}, there exists $R = R(t) > 0$ such that $\mathcal{M}_t \setminus B_R(0)$ is asymptotically conical to $\mathcal{C}$. Since $\mathcal{M}_0 = \mathcal{C}$, by pseudolocality theorem for Brakke flows (Theorem 1.5 of \cite{IlmanenNevesSchulze}), given $\eta > 0$ there exists $t_1$ such that for $0 < t < t_1$ and $x \in \mathcal{C} \setminus B_1(0)$, $\mathcal{M}_t \cap C_{\sqrt{t_1}}(x)$ can be written as a normal graph over $B_{\sqrt{t_1}}^n(x) \cap T_{x}\mathcal{C}$ with Lipschitz constant bounded by $\eta$. By parabolic rescaling, we see that, for $0 < t < 2t_0$ and $x \in \mathcal{C} \setminus B_{\sqrt{2t_0t_1^{-1}}}(0)$, $\mathcal{M}_t \cap C_{\sqrt{2t_0}}(x)$ can be written as a normal graph over $B_{\sqrt{2t_0}}^n(x)$ with Lipschitz constant bounded by $\eta$. In particular putting $t = t_0$ gives the desired graphicality. The regularity of $u$ follows from \cref{asymptotic-conical}. \par 
	To see that the function $u$ decays near infinity, by a similar argument as in \cref{distance-estimate}, there exists $N$ such that for all $R > 1$ we have
	\begin{align*}
		\mathcal{M}_{t_0} \setminus B_{NR\sqrt{t_0+1}}(0) \subset \mathcal{T}_{R^{-1}\sqrt{t_0+1}}(\mathcal{C}).
	\end{align*}	
	Equivalently, for $R > N\sqrt{t_0+1}$,
	\begin{align*}
		\mathcal{M}_{t_0} \setminus B_{R'}(0) \subset \mathcal{T}_{N(t_0+1)(R')^{-1}}(\mathcal{C}).
	\end{align*}
	Enlarge $R$ if needed so that $\mathcal{M}_{t_0}^+ \setminus B_R(0)$ is a normal graph over $\mathcal{C}$. We see that $u$ satisfies $\abs{u(p)} \le C\abs{p}^{-1}$ for $p \in \mathcal{C}$ and $u(p) \in \mathcal{M}_t^+ \setminus B_{2R}(0)$.
\end{proof}
\begin{lem}
\label{graphicality}
	Suppose $\mathcal{C} \cap \mathbb{H}$ is a Lipschitz graph over $\Pi$. Let $\mathcal{M}$ be as above. For every $t > 0$ there is $R = R(t)$ such that $\mathcal{M}_t^+ \setminus B_R(0)$ can be written as a graph over $\Pi$; that is, the projection $\pi: \mathcal{M}_t^+ \setminus B_R(0) \to \Pi$ is injective.
\end{lem}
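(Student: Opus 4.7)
The plan is to use \cref{asymptotic-expansion} to view $\mathcal{M}_t^+ \setminus B_R(0)$ as a small $C^1$ normal perturbation of $\mathcal{C} \cap \mathbb{H}$, and then leverage the assumption that $\mathcal{C} \cap \mathbb{H}$ is already Lipschitz-graphical over $\Pi$ to transfer this graphicality onto $\Pi$ outside a sufficiently large ball.

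Concretely, I would decompose $\mathbb{R}^{n+1} = \Pi \oplus \mathbb{R}\nu_\Pi$, writing $\mathcal{C} \cap \mathbb{H}$ as the graph of a Lipschitz function $f : \Omega \subset \Pi \to \mathbb{R}$ with constant $L$; this already gives the pointwise bound $\inner{\nu_\mathcal{C}, \nu_\Pi} \geq (1 + L^2)^{-1/2}$ on the smooth part of $\mathcal{C} \cap \mathbb{H}$. By \cref{asymptotic-expansion}, for $R_0 = R_0(t)$ large, $\mathcal{M}_t^+ \setminus B_{R_0}(0)$ is contained in the image of $p \mapsto p + u(p)\nu_\mathcal{C}(p)$ for a function $u$ with $\abs{u(p)} \leq C\abs{p}^{-1}$. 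Crucially, inspecting the pseudolocality step in the proof of that lemma, the local Lipschitz parameter $\eta$ is at our disposal, which yields the intrinsic gradient bound $\abs{\nabla u(p)} \leq \eta$ on $\mathcal{C} \cap \mathbb{H} \setminus B_{R_0}(0)$.

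Composing with the chart $y \mapsto y + f(y)\nu_\Pi$ and then with $\pi$, the lemma reduces to injectivity of $\Phi(y) := y + G(y)$ on $\Omega \setminus B_R(0)$ for some $R > R_0$, where $G(y) = u(\cdot)\,\pi(\nu_\mathcal{C}(\cdot))$ is evaluated at $y + f(y)\nu_\Pi$. The chain rule, combined with $\abs{\nabla u} \leq \eta$, $\abs{u} \leq C\abs{p}^{-1}$, the cone bound $\abs{A_\mathcal{C}(p)} \leq C\abs{p}^{-1}$, and $\abs{\nabla f} \leq L$, produces an estimate of the form $\abs{\nabla G(y)} \leq C_L(\eta + \abs{y}^{-2})$. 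Choosing $\eta$ small and $R$ large enough that $\abs{\nabla G} \leq 1/2$ on $\Omega \setminus B_R(0)$, I would then split into two cases according to whether $\abs{y_1 - y_2}$ is large or small relative to $\sup \abs{G}$: in the former case the triangle inequality $\abs{\Phi(y_1) - \Phi(y_2)} \geq \abs{y_1 - y_2} - 2\sup\abs{G}$ suffices, while in the latter, the segment from $y_1$ to $y_2$ stays in the graphical region and the mean value theorem forces $\abs{\Phi(y_1) - \Phi(y_2)} \geq \tfrac{1}{2}\abs{y_1 - y_2}$.

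The only nontrivial point is extracting $C^1$-smallness of $u$ from pseudolocality, which \cref{asymptotic-expansion} supplies only implicitly via its $C^0$-decay conclusion; once that gradient control is in hand, the remainder is a routine perturbation-of-identity argument.
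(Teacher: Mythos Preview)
Your approach is correct and rests on the same core idea as the paper: extract $C^1$-smallness of the normal graph $u$ from the pseudolocality step in \cref{asymptotic-expansion}, then transfer the Lipschitz-graphicality of $\mathcal{C} \cap \mathbb{H}$ over $\Pi$ to $\mathcal{M}_t^+ \setminus B_R(0)$. The execution differs in emphasis. The paper argues infinitesimally: since $\mathcal{C} \cap \mathbb{H}$ is a Lipschitz graph over $\Pi$, the vector $\nu_\Pi$ lies in no tangent space of the cone, and for $\eta$ small this persists for the tangent spaces of $\mathcal{M}_t^+ \setminus B_R(0)$; the paper then equates this tangent-space condition with graphicality (as it does explicitly later when setting up the moving-plane argument). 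You instead verify global injectivity of the projection directly via a perturbation-of-identity computation for $\Phi(y) = y + G(y)$, splitting into the large- and small-separation cases. Your route is more self-contained on the point of \emph{global} injectivity, whereas the paper's tangent-space shortcut implicitly relies on the fact that a small $C^1$-perturbation of a global graph remains one; both are valid here, and the paper's version is shorter. One minor point to tidy in your Case~2: the domain $\Omega \subset \Pi$ need not be convex, but since it is an open cone in $\Pi$ its distance to $\partial\Omega$ scales linearly in $|y|$, so for $R$ large the short segment indeed stays in the region where your gradient bound holds.
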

\begin{proof}
	By \cref{asymptotic-expansion}, for every $\eta > 0$, there is $R = R(t)$ such that $\mathcal{M}_t^+ \setminus B_R(0)$ is a normal graph over $\mathcal{C}$ with Lipschitz constant bounded by $\eta$. Since $\mathcal{C} \cap \mathbb{H}$ is a Lipschitz graph over $\Pi$, the unit normal vector $\nu_\Pi$ is not contained in any tangent space to $x' \in (\mathcal{C} \cap \mathbb{H}) \setminus \{0\}$. Therefore by taking $\eta$ sufficiently small we may make sure that $\nu_\Pi$ is also not contained in any tangent space to $x \in \mathcal{M}_t^+ \setminus B_R(0)$ (here $R = R(t,\eta)$, but of course $\eta$ in turn depends on $t$). This proves that $\mathcal{M}_t^+ \setminus B_R(0)$ is graphical over $\Pi$ as well.
\end{proof}

\cref{tameness}  and \cref{graphicality} allow us to use the moving plane method without smoothness, which we now carry out. Let 
\begin{align*}
	\Pi^s = \{(x,x_{n+1}) \in \mathbb{R}^{n+1} \mid x_{n+1}= s\} \times [0,\infty) \subset \mathbb{R}^{n+1} \times [0,\infty)
\end{align*} 
be the hyperplane at level $s$ in spacetime.  Given a set $A \subset \mathbb{R}^{n+1} \times [0,\infty)$ and $s \in [0,\infty)$ we let 
\begin{align*}
	A^{s+} = \{(x,x_{n+1},t) \in A \mid x_{n+1} >  s\} \text{ and } A^{s-} = \{(x,x_{n+1},t) \in A \mid x_{n+1} < s\}
\end{align*}
be the parts of $A$ lying above $\Pi_s$ and below $\Pi_s$ respectively. Finally, the set 
\begin{align*}
	A^{s*} = \{(x,x_{n+1},t) \mid (x, 2s - x_{n+1},t) \in A\}
\end{align*}
is the reflection of $A$ across $\Pi_s$. We say $A > B$ for $A, B \subset \mathbb{R}^{n+1} \times [0,\infty)$ provided for any $(x,s,t) \in A$ and $(x,s',t) \in B$ we have $s > s'$. In contrast, a subscript $t$ will continue to denote the time $t$ slice of a spacetime set. \par 
To set up the proof of \cref{reflection-symmetry}, WLOG we may assume the hyperplane is $\{x_{n+1} = 0\}$. Fix a time $T_0 > 0$. We consider $\mathcal{M}$ on $[0,T_0)$ as its spacetime track; namely,
\begin{align*}
	\mathcal{M} = \bigcup_{t =0}^{T_0} \mathcal{M}_t \times \{t\} \subset \mathbb{R}^{n+1} \times [0,T_0]. 
\end{align*}
Finally, let
\begin{align*}
	S = \{s \in (0,\infty) \mid (\mathcal{M}^{s+})^* > \mathcal{M}^{s-}, \text{ and } (\mathcal{M}^{s+})_t \text{ is graphical over } (\Pi^s)_t \text{ for } t \in [0,T_0]\}.
\end{align*}
Here graphicality means that the projection $\pi_s: (\mathcal{M}^{s+})_t \to (\Pi^s)_t$ is injective for $t \in [0,T]$. Since each $(\mathcal{M}^{s+})_t$ is countably $n$-rectifiable, graphicality is equivalent to that the unit normal $e_{n+1} = (0,\ldots,0,1)$ of $(\Pi^s)_t$ is not contained in the approximate tangent space of $(\mathcal{M}^{s+})_t$ for $t \in [0,T_0]$. Observe that $(\mathcal{M}^{s+})^*$ is asymptotically conical to the translated cone $(\mathcal{C} + 2se_{n+1}) \times [0,\infty)$ (in the sense of \cref{asymptotic-expansion} --- this ensures that a hypersurface cannot be simultaneously asymptotic to two distinct cones). \par
First we need a lemma about smoothness of the top part of the flow similar to Proposition 7.4 of \cite{CHHW}.
\begin{lem}
	\label{smoothness}
	Suppose $s > 0$ and $s \in S$. Then $\mathcal{M}^{s+}$ is a smooth MCF asymptotic to $\mathcal{C}$. Moreover, every point on $\mathcal{M} \cap \{x_{n+1} = s\}$ is a regular point of the flow. 
\end{lem}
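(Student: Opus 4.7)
My plan is to split the argument into two cases---interior points of $\mathcal{M}^{s+}$ (where $x_{n+1} > s$) and boundary points on $\mathcal{M} \cap \Pi^s$---and in each case pass to a tangent flow, identifying it as a multiplicity one hyperplane via graphicality, tameness, and the entropy bound, so that unit-regularity delivers regularity.

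For an interior point $X = (x,t)$ with $x_{n+1} > s$, I would take any tangent flow $\mathcal{X}$. The entropy bound $\lambda[\mathcal{C}] < 2$ forces $\mathcal{X}$ to have multiplicity one. The graphicality condition in the definition of $S$---that $e_{n+1}$ is nowhere in the approximate tangent space of $(\mathcal{M}^{s+})_t$---is invariant under translation and parabolic rescaling, so $e_{n+1}$ remains nowhere tangent to $\mathcal{X}_{-1}$. Combined with tameness and multiplicity one, $\mathcal{X}_{-1}$ is a complete smooth globally graphical self-shrinker (or static minimal cone), which must be a hyperplane by a Bernstein-type rigidity result (for shrinkers, L. Wang's theorem for entire graphical self-shrinkers; for cones, the corresponding classification of graphical minimal cones). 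Unit-regularity then implies $X$ is a regular point. Smoothness of $\mathcal{M}^{s+}$ on $\{x_{n+1} > s\}$ follows, and the asymptotic behavior to $\mathcal{C}$ is given by \cref{asymptotic-conical}.

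For a boundary point $X = (x', s, t) \in \mathcal{M} \cap \Pi^s$, I would pass to a tangent flow $\mathcal{X}$, using tameness (\cref{tameness}) to obtain that $\mathcal{X}_{-1}$ is smooth with multiplicity one away from a singular set of Hausdorff codimension at least two. The graphicality hypothesis rescales to give $\mathcal{X}^{0+}$ globally graphical over $\{x_{n+1}=0\}$, and the interior argument applied to $\mathcal{X}^{0+}$ identifies it with a half-hyperplane $H^+$ emerging from $\{x_{n+1}=0\}$. The moving plane inequality $(\mathcal{M}^{s+})^* > \mathcal{M}^{s-}$ rescales (weakly) to $(\mathcal{X}^{0+})^* \ge \mathcal{X}^{0-}$, placing $\mathcal{X}^{0-}$ beneath the reflected half-hyperplane $H^- = (H^+)^*$. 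At any smooth point of $\mathcal{X}_{-1}$ on $\{x_{n+1}=0\}$---an open dense set by tameness---$\mathcal{X}_{-1}$ must continue $H^+$ smoothly across the plane, forcing local coincidence with the full hyperplane $H^+ \cup H^-$. By density, $\mathcal{X}_{-1}$ equals this hyperplane globally, and unit-regularity then gives that $X$ is a regular point.

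The hard part will be the boundary case, specifically identifying $\mathcal{X}^{0-}$ with the reflected half-hyperplane $H^-$: a priori $\mathcal{X}^{0-}$ could be empty, disconnected, or contain extra pieces strictly below $H^-$. Tameness and the smooth-continuation argument handle the generic situation, but to rule out degenerate configurations cleanly, I would invoke the Hopf lemma without smoothness \cref{hopf-lemma} applied to the ordered pair $((\mathcal{X}^{0+})^*, \mathcal{X}^{0-})$, which touch on $\{x_{n+1}=0\}$: ordered touching on the reflection plane forces equality. This boundary analysis is the asymptotically conical analog of Proposition 7.4 of \cite{CHHW}.
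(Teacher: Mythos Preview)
Your approach is genuinely different from the paper's, and there is a real gap in the interior argument.

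The paper does \emph{not} argue pointwise via tangent flows. Instead it runs a continuity argument from the top: set $I_s=\{s'\ge s: \mathcal{M}^{s'+}\text{ is smooth}\}$, note $I_s\neq\emptyset$ by \cref{asymptotic-conical}, and put $s_0=\inf I_s$. The crucial feature of this setup is that $\mathcal{M}^{s_0+}$ is smooth by construction, so its reflection $(\mathcal{M}^{s_0+})^*$ is a smooth flow. This is exactly what is needed to verify the ``disjoint regular parts'' hypothesis of \cref{hopf-lemma}: if $(\mathcal{M}^{s_0+})^*$ and $\mathcal{M}^{s_0-}$ touched in $\mathbb{H}$, then \cref{maximum-principles} (which needs one side smooth) forces local coincidence, and propagating this gives the asymptotic-cone contradiction. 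Hopf then yields regularity along $\{x_{n+1}=s_0\}$, and compactness of $\mathcal{M}\cap\Pi^{s_0}$ pushes $s_0$ down to $s$. No Bernstein-type classification is ever invoked.

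Your interior step asserts that the condition ``$e_{n+1}$ is nowhere in the approximate tangent space of $(\mathcal{M}^{s+})_t$'' passes to any tangent flow $\mathcal{X}$. This is the gap. The condition is $\theta(x)<\pi/2$ pointwise, with no uniform lower bound on $\pi/2-\theta$; under a blow-up sequence the tangent planes at nearby points $z_\lambda\to x_0$ can rotate so that $\theta\to\pi/2$, and in the limit $e_{n+1}$ can lie in the approximate tangent space of $\mathcal{X}_{-1}$ at points away from the origin. Graphicality (injective vertical projection) is likewise not closed under varifold convergence even at multiplicity one; a sequence of steep graphs can converge to a vertical plane. So you cannot conclude that $\mathcal{X}_{-1}$ is a graphical self-shrinker, and the appeal to L.~Wang's theorem is unjustified. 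The same defect undermines your boundary argument, since you feed the interior conclusion into the identification of $\mathcal{X}^{0+}$ as a half-hyperplane. The paper's inductive device sidesteps all of this by supplying smoothness of the upper piece \emph{before} invoking either the maximum principle or the Hopf lemma.
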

\begin{proof}
	Let 
	\begin{align*}
		I_s = \{s' \ge s \mid \mathcal{M}^{s'+} \text{ is smooth} \}.
	\end{align*}
	By \cref{asymptotic-conical}, there is $R = R(t)$ such that $\mathcal{M}_t \setminus B_{R}(0)$ is a smooth MCF asymptotic to $\mathcal{C}$. So for sufficiently large $s$ depending on $T_0$ we see that $\mathcal{M}^{s+}$ is asymptotic to $\mathcal{C} \times [0,T_0]$. This shows $I_s$ is not empty. \par 
	Let $s_0 = \inf I_s$. We first argue that $\mathcal{M} \cap \{x_{n+1} = s_0\}$ consists of regular points. Let $(x_0,t_0) \in \mathcal{M} \cap \{x_{n+1} = s_0\}$ and let $\mathcal{M}^*$ be the flow reflected across $\Pi_s$. We wish to apply the Hopf lemma \cref{hopf-lemma} to $\mathcal{M}$, $\mathcal{M}^*$ and $\mathbb{H} = \{x_n < s\}$ to conclude that $(x_0,t_0)$ is a regular point. To this end we must check that the conditions are satisfied. Tameness follows from \cref{tameness}. We may also assume that $\partial \mathbb{H}$ is not a tangent flow to either $\mathcal{M}$ or $\mathcal{M}^*$ at $(x_0,t_0)$, because otherwise the entropy bound together with Brakke regularity theorem implies $(x_0,t_0)$ is a regular point. Finally, we claim $\reg \mathcal{M}_t \cap \mathbb{H}$ and $\reg \mathcal{M}^*_t \cap \mathbb{H}$ are disjoint for $t$ sufficiently close to $t_0$. Suppose not, then there must be a first time of contact:
	\begin{align}
		t_1 = \inf\{t \mid  (\mathcal{M}^{s_0-}_{t}) \cap (\mathcal{M}^{s_0+})^*_t \cap \mathbb{H} \ne \emptyset \}
	\end{align}
	in $\mathbb{H}$. Given any point
	\begin{align*}
		(x_1,t_1) \in (\mathcal{M}^{s_0-}_t) \cap (\mathcal{M}^{s_0+})^*_t \subset \mathbb{H}.
	\end{align*}
	By definition of $s_0$ we know $(\mathcal{M}^{s_0+})^*_{t_1}$ is in fact a smooth MCF, so maximum principle \cref{maximum-principles} implies that $\mathcal{M}^{s_0-}$ agrees with $(\mathcal{M}^{s_0+})^*$ in some parabolic cylinder around $(x_1,t_1)$. The same reasoning applied to any other point in $\mathcal{M}^{s_0-}\cap (\mathcal{M}^{s_0+})^*$ shows that a connected component of $\mathcal{M}^{s_0-}$ agrees with a connected component of $(\mathcal{M}^{s_0+})^*$. This implies that $(\mathcal{M}^{s_0+})^*$ is simultaneously asymptotic to $\mathcal{C} \times [0,\infty)$ and $(\mathcal{C} + 2s_0e_{n+1}) \times [0,\infty)$, a contradiction. Hence the last condition in order to apply \cref{hopf-lemma} is satisfied and we conclude that $\mathcal{M} \cap \{x_{n+1} = s_0\}$ is regular. \par 
	Lastly we show that $s_0 = s$. This is a consequence of the fact that $\mathcal{M} \cap \Pi^{s_0}$ is compact. Using small balls as barriers similar to \cref{distance-estimate}, one sees that there exists some constant $N_1$ such that
	\begin{align*}
		\mathcal{M}_t \setminus B_{N_1R\sqrt{t+1}}(0) \subset \mathcal{T}_{R^{-1}\sqrt{t+1}}(\mathcal{C})
	\end{align*}
	for $R > 1$. On the other hand, for a fixed $t$ there is a constant $N_2$ such that 
	\begin{align*}
		\mathcal{M}_t \cap B_{N_1\sqrt{t+1}}(0) \subset \mathcal{T}_{N_2}(\mathcal{C}).
	\end{align*}
	as the first set is clearly compact. These two facts together imply the existence of a constant $N_3$ such that
	\begin{align*}
		\mathcal{M}_t \cap \{x_{n+1} = s_0\} \subset \mathcal{T}_{N_3}(\mathcal{C}) \cap \{x_{n+1} = s_0\}.
	\end{align*}
	This shows that $\mathcal{M}_t \cap \{x_{n+1} = s_0\}$ is compact (as $\mathcal{C} \cap \{x_{n+1} = s_0\}$ is compact), and since $t \in [0,T_0]$, $\mathcal{M} \cap \Pi^{s_0}$ is compact as well. To finish the proof, note that by the previous paragraph $\mathcal{M} \cap \{x_{n+1} = s_0\}$ consist of regular points only. At each regular point $(x_0,t_0)$ there is some $r = r(x_0,t_0)$ such that $\mathcal{M}$ is smooth in $P((x_0,t_0),r)$. Since $\mathcal{M} \cap \{x_{n+1} = s_0\}$ is compact, $r$ is uniformly bounded below away from 0, and this is a contradiction unless $s_0 = s$.
\end{proof}

\begin{proof}[Proof of \cref{reflection-symmetry}]
	To finish the proof we must show $S$ is nonempty, $S$ is open, and $S$ is closed. \par 
	Again by \cref{asymptotic-conical}, for sufficiently large $s$ we can make sure that $\mathcal{M}^{s+}$ is a smooth MCF,
	\begin{align*}
		(\mathcal{M}^{s+})^* \cap ((\mathcal{M}^{s-}) \cap \{x_{n+1} \ge 0\}) = \emptyset,
	\end{align*} 
	and that for any $(x,s_1,t) \in (\mathcal{M}^{s+})^*$ and $(x,s_2,t) \in \mathcal{M}^{0-}$ it holds that $s_1 - s_2 \ge 2s-1$. These two facts imply that for sufficiently large $s$ the inequality $(\mathcal{M}^{s+})^* > \mathcal{M}^{s-}$ is valid. On the other hand, by \cref{graphicality}, there is $R = R(T_0)$ such that $(\mathcal{M}^{0+})_{T_0} \setminus B_R(0)$ is graphical over $(\Pi^0)_{T_0}$. So for $s > R$ we have $(\mathcal{M}^{s+})_t$ is graphical over $(\Pi^s)_t$ for all $t \in [0,T_0)$. This shows $S$ is not empty. \par
	It is clear that $(\mathcal{M}^{s+})^* > \mathcal{M}^{s-}$ is an open condition. To see that the graphicality condition is also an open condition, let $\theta_t(x)$ be the angle between the unit normal to the approximate tangent space at a point $x \in \mathcal{M}_t$ and $e_{n+1}$. Suppose that $s \in S$, then graphicality is equivalent to $\theta_t(x) < \frac{\pi}{2}$ for all $t \in [0,T_0)$ and $x \in (\mathcal{M}^{s+})_t$. Since the flow $\mathcal{M}$ is $C^{2,\alpha}$-asymptotically conical, for given $t$ there exists $\varepsilon > 0$ such that $\theta_t(x) < \pi/2$ for all $x \in (\mathcal{M}^{s'+})_t$ where $\abs{s' - s} < \varepsilon$. Since the time interval is compact, there is a universal $\varepsilon$ such that the above holds for all $t \in [0,T_0)$. This shows openness of $S$. \par 
	Finally we show $S$ is closed. Obviously if $s \in S$ then $[s,\infty) \subset S$. So we assume $(s,\infty) \subset S$ and suppose for a contradiction that $s \not \in S$. At level $s$, either $(\mathcal{M}^{s+})^* \cap \mathcal{M}^{s-} \ne \emptyset$ or there is some $t_0 \in [0,T_0)$ such that $(\mathcal{M}^{s+})_{t_0}$ fails to be graphical over $(\Pi^s)_{t_0}$. \par 
	In the first case, $s$ is necessarily the first level of contact. By choosing $r$ small enough we can ensure $(\mathcal{M}^{s+})^*$ and $\mathcal{M}^{s-}$ are graphical in $P(X,r)$ where $X \in (\mathcal{M}^{s+})^* \cap \mathcal{M}^{s-}$. Moreover, by \cref{smoothness}, the reflected part $(\mathcal{M}^{s+})^*$ is a smooth MCF, so all the conditions of the maximum principle \cref{maximum-principles} are satisfied (note that the Gaussian density bound is automatic from the entropy bound). Applying \cref{maximum-principles}, we see $(\mathcal{M}^{s+})^*$ and $\mathcal{M}^{s-}$ agree in a neighborhood of $X$. Now an identical argument as in the proof of \cref{smoothness} shows that $(\mathcal{M}^{s_0+})^*$ is simultaneously asymptotic to $\mathcal{C} \times [0,\infty)$ and $(\mathcal{C} + 2s_0e_{n+1}) \times [0,\infty)$, which is again a contradiction. \par 
	In the second case, WLOG we may assume $t_0$ is the first time the graphicality condition fails. Then there necessarily exists a point $X = (x,s,t_0) \in \mathcal{M}_{t_0} \cap \{x_{n+1} = s\}$ whose tangent space contains the vector $e_{n+1}$. We again check the condition to apply Hopf lemma \cref{hopf-lemma} to $\mathcal{M}^1 = (\mathcal{M}^{s+})^*$, $\mathcal{M}^2 = \mathcal{M}^{s-}$ and $\mathbb{H} = \{x_{n+1} < s\}$ as in the proof of \cref{smoothness}. Tameness follows from \cref{tameness}. Since $e_{n+1}$ is normal to the hyperplane $\{x_{n+1} = s\}$ and $X$ is a regular point of $\mathcal{M}$ by \cref{smoothness}, we see that $\partial \mathbb{H}$ is not the tangent flow to either $\mathcal{M}^1$ or $\mathcal{M}^2$ (here we used the fact that the tangent flow at a regular point agrees with the static flow of the tangent plane). The disjointness of the regular parts of $\mathcal{M}^1$ and $\mathcal{M}^2$ in $\mathbb{H}$ follows identically as in the proof of \cref{smoothness}. Hence, we may apply \cref{hopf-lemma} to conclude that $\mathcal{M}^1$ and $\mathcal{M}^2$ have distinct tangents, which is a contradiction since the tangent spaces agree at $X$. This concludes the proof that $S$ is closed. \par 
	This shows that $S = (0,\infty)$. At $s = 0$, one sees that the graphicality condition is preserved (alternatively one can run the moving plane method from the other side, i.e. $s < 0$), but the strict inequality $(\mathcal{M}^{0+})^* > \mathcal{M}^{0-}$ does not hold anymore, which implies that $(\mathcal{M}^{0+})^* \cap  \mathcal{M}^{0-} \ne \emptyset$. Applying the maximum principle \cref{maximum-principles} once again we conclude $(\mathcal{M}^{0+})^* = \mathcal{M}^{0-}$, and this is the required reflection symmetry across $\Pi^0 = \{x_{n+1} = 0\}$. 
\end{proof}

 \section{Rotational symmetry}
\label{rotational-symmetry-section}
In this section we prove \cref{main-theorem} and \cref{simons-cone-thm}. First let $\mathcal{C}$ be a smooth, rotationally symmetric double cone. A typical example of such a $\mathcal{C}$ is given by \cref{good-cone} which has the $x_1$ axis as its axis of symmetry. Our theorem allows more general cones of the form 
\begin{align*}
	x_1^2 =  \begin{cases} m_1^2(x_2^2 + x_3^2 + \cdots + x_n^2), & x_1 \ge 0 \\  m_2^2(x_2^2 + x_3^2 + \cdots + x_n^2), & x_1 < 0\end{cases},
\end{align*}
where $m_1, m_2 >0$ (i.e. the top and bottom parts of the cone can have different cone angles). A cone $\mathcal{C}$ of the above form should also satisfy $\lambda[\mathcal{C}] < 2$, but this is not explicitly known. See \cref{further-remarks} for more on the entropy of cones. 
\begin{proof}[Proof of \cref{main-theorem}]
	WLOG we may assume the axis of symmetry is the $x_1$-axis. Observe that rotational symmetry is equivalent to reflection symmetry across every hyperplane containing the $x_1$-axis. Up to a rotation it suffices to show that $\mathcal{M}$ is symmetric across the hyperplane $\{x_{n+1} = 0\}$. Since $\mathcal{C}$ is a smooth graph over $\{x_{n+1} = 0\}$, the desired conclusion follows from \cref{reflection-symmetry}. 
\end{proof}

\begin{proof}[Proof of \cref{cylindrical-singularity}]
	The rotational symmetry is \cref{main-theorem}. Since $\mathcal{C}$ is symmetric across the hyperplane $\{x_1 = 0\}$ as well, we can apply \cref{reflection-symmetry} to $\mathbb{H} = \{x_1 > 0\}$ to conclude that $\mathcal{M}$ is smooth away from $\{x_1 = 0\}$. Together with \cref{main-theorem} we infer that the only possible singularity of $\mathcal{M}$ is at the origin. Moreover, any tangent flow $\mathcal{X}$ at the first singular time must be rotationally symmetric. By the classification of rotationally symmetric self-shrinkers of Kleene and M\o ller \cite{KleeneMoller}, $\mathcal{X}$ has to be one of the following: a round sphere, a round cylinder $\mathbb{R} \times \mathbb{S}^{n-1}$ or a smooth embedded $\mathbb{S}^1 \times \mathbb{S}^{n-1}$. Since $\mathcal{M}$ is not closed, we conclude that $\mathcal{X}$ has to be a round cylinder. The uniqueness of the cylinder follows from the work of Colding--Minicozzi \cite{CMUniqueness}.
\end{proof}
\begin{rem}
	The above corollary does not guarantee the existence of a cylindrical singularity, as it is entirely possible that the flow remains smooth for all times. See \cref{disconnection} for sufficient conditions for the flow to have a singularity.
\end{rem}
Next we apply the same method to cones with more general symmetry groups. For this part we will work, for convenience, in $\mathbb{R}^{n+2}$ instead. Let $O(p)$ denote the symmetry group of $\mathbb{S}^{p-1} \subset \mathbb{R}^p$. Fix an integer $1 \le p \le n-1$ and suppose $\mathcal{C}$ is a smooth double cone with $\lambda[\mathcal{C}] < 2$ that has symmetry group $O(p+1) \times O(n-p+1)$. Typical examples are cones $\mathcal{C}_{n,p}$ over the families of minimal hypersurfaces in $\mathbb{S}^n$ given by
\begin{align*}
	\mathcal{S}_{n,p} = \sqrt{\frac{p}{n}} \mathbb{S}^p \times \sqrt{\frac{n-p}{n}} \mathbb{S}^{n-p} \subset \mathbb{S}^n.
\end{align*}
The cones $\mathcal{C}_{n,p}$ are known as Simons-type cones. An immediate consequence of \cref{simons-cone-thm} is:
\begin{cor}
	Any smooth self-expander coming out of a Simons-type cone $\mathcal{C}_{n,p}$ inherits the $O(p+1) \times O(n-p+1)$-symmetry of $\mathcal{C}_{n,p}$.
\end{cor}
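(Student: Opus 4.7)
The proof is a direct application of \cref{simons-cone-thm} to the smooth mean curvature flow induced by the self-expander. Given a smooth self-expander $\Sigma \subset \mathbb{R}^{n+2}$ asymptotic to $\mathcal{C}_{n,p}$, the self-expander equation \cref{self-expander-equation} ensures that the rescaled family $\mathcal{M}_t := \sqrt{t}\,\Sigma$ is a smooth mean curvature flow for $t > 0$. Furthermore, the smooth asymptotic convergence of $\Sigma$ to $\mathcal{C}_{n,p}$ at infinity implies that the associated Radon measures $\mu_t$ converge to $\mathcal{H}^{n+1}\llcorner \mathcal{C}_{n,p}$ as $t \to 0^+$. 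Thus, setting $\mu_0 = \mathcal{H}^{n+1}\llcorner \mathcal{C}_{n,p}$, the family $\{\mu_t\}_{t \in [0,\infty)}$ is an integral Brakke flow coming out of $\mathcal{C}_{n,p}$ that is smooth on all of $(0,\infty)$.

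The next step is to verify the remaining hypotheses of \cref{simons-cone-thm}. The entropy bound $\lambda[\mathcal{C}_{n,p}] < 2$ is tacitly part of the hypothesis, being the only substantive restriction on the pair $(n,p)$ for which the preceding theorem applies. Unit-regularity follows at once from the smoothness of $\mathcal{M}$, since the tangent flow at every smooth spacetime point is a multiplicity-one static plane. For cyclicity, one notes that since $\Sigma$ is a smoothly embedded hypersurface in $\mathbb{R}^{n+2}$, it bounds a well-defined open region, and hence the associated mod-$2$ flat chain $[V(\mu_t)]$ has no boundary for each $t > 0$. Applying \cref{simons-cone-thm} with any choice of $T > 0$ then yields that $\mathcal{M}$ is invariant under the $O(p+1) \times O(n-p+1)$ action for all $t \ge 0$.

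Evaluating at $t = 1$, we recover $\Sigma = \mathcal{M}_1$, which is therefore invariant under the same group action, as desired. There is no real obstacle in this plan: the only nontrivial issue is the entropy bound on $\mathcal{C}_{n,p}$, which is built into the interpretation of the statement (in line with the introductory discussion of Simons-type cones as examples satisfying the standing hypothesis $\lambda[\mathcal{C}] < 2$).
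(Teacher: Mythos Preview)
Your proposal is correct and follows the same approach as the paper, which simply records the result as ``an immediate consequence of \cref{simons-cone-thm}'' without giving a separate argument; you have written out the obvious details of why the smooth self-similar flow $\{\sqrt{t}\,\Sigma\}_{t>0}$ satisfies the hypotheses of that corollary. One minor remark: the entropy bound $\lambda[\mathcal{C}_{n,p}]<2$ is not merely a tacit hypothesis in the paper---it is actually verified in \cref{further-remarks} using the Ilmanen--White density formula together with the fact that the Simons cones are minimal (hence self-shrinkers), so that $\lambda[\mathcal{C}_{n,p}]=\Theta_{\mathcal{C}_{n,p}}(0)<2$ by Colding--Minicozzi.
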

\begin{rem}
	Similar results for minimal surfaces have been obtained by Mazet \cite{Mazet}, using the elliptic moving plane method.
\end{rem}
\begin{proof}[Proof of \cref{simons-cone-thm}]
	Write $(x_1,\ldots,x_{p+1},y_1,\ldots,y_{n-p+1})$ the standard coordinates on $\mathbb{R}^{n+2}$.  WLOG we may assume $\mathcal{L}(\mathcal{C)} = \sigma_1 \times \sigma_2$ where $\sigma_1$ is rotationally symmetric across the $x_1$ axis and $\sigma_2$ rotationally symmetric across the the $y_1$-axis. Evidently showing the rotational symmetry in $x$-coordinates is enough, as the identical argument works for the $y$-coordinates. It suffices to show that the reflection symmetry is preserved through all hyperplanes of the form
	\begin{align*}
		\sum_{i = 2}^{p+1} c_i x_i = 0,
	\end{align*}
	which, up to an ambient rotation in the $x$-coordinates, we may assume to be $\{x_{p+1} = 0\}$. Note that the a cone with $O(p+1)\times O(n-p+1)$ symmetry takes the form (up to relabeling) 
	\begin{align*}
		\sum_{i=1}^{p+1} a_i x_i^2 = \sum_{j=1}^{n-p+1} b_jy_j^2
	\end{align*}
	for suitable choices of coefficients $a_i$ and $b_j$. It is not hard to see that $\mathcal{C} \cap \{x_{p+1} > 0\}$ is a graph over $\{x_{p+1} = 0\}$ via
	\begin{align*}
		x_{p+1} = a_{p+1}^{-1/2}\left(\sum_{j=1}^{n-p+1} b_jy_j^2 - \sum_{i=1}^p a_ix_i^2\right)^{1/2}.
	\end{align*}
	Hence \cref{reflection-symmetry} applies and the desired reflection symmetry follows (of course, we have to put
	\begin{align*}
		\Pi^s = \{(x,x_{p+1},y) \in \mathbb{R}^{n+2} \mid x_{p+1} = s\} \times [0,\infty) \subset \mathbb{R}^{n+2} \times [0,\infty).
	\end{align*}
	and change the dimensions in the proofs accordingly).
\end{proof}

\section{Construction of the matching motion}
\label{construction-of-matching-motion}
In this section we use the ideas of Bernstein--Wang in \cite{BWTopologicalUniqueness} and \cite{BWTopology} to produce an immortal Brakke flow starting from a cone $\mathcal{C}$ with $\lambda[\mathcal{C}] < 2$ that is not self-expanding. This demonstrates that \cref{main-theorem} is not void. \par 
\subsection{Self-expanders}
\label{expander-preliminary}
Let us briefly summarize some basic facts about self-expanders. It is often helpful to consider the variational characterization of self-expanders. Formally, self-expanders are critical points of the functional:
\begin{align*}
	E[\Sigma] = \int_\Sigma e^{\frac{\abs{x}^2}{4}} d\mathcal{H}^n
\end{align*}
and equation \cref{self-expander-equation} corresponds to the Euler-Lagrange equation of $E[\Sigma]$. We record the second variation of $E$. For a proof see for example Proposition 4.2 in \cite{BWSpace}. 
\begin{thm}
	If $\{\phi_t\}_{t \in (-\varepsilon,\varepsilon)}$ is a compactly supported normal variation of $\Sigma$ with $\left.\frac{d\phi_t}{dt} \right|_{t = 0} = f\nu_\Sigma$, where $\nu_\Sigma$ is the outwards unit normal. Then 
	\begin{align*}
		\left.\frac{d^2}{dt^2}\right|_{t=0} E[\phi_t(\Sigma)] = -\int_{\Sigma} f L_\Sigma f d\mathcal{H}^n
	\end{align*}
	where  $L_\Sigma$ is the stability operator of $\Sigma$ given by
	\begin{align*}
		L_\Sigma = \Delta_\Sigma + \frac{1}{2} x\cdot \nabla_\Sigma + \abs{A_\Sigma}^2 - \frac{1}{2}.
	\end{align*}
\end{thm}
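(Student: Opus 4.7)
The plan is to differentiate $E[\phi_t(\Sigma)]$ twice in $t$ directly. Pulling everything back to $\Sigma$ via the tangential Jacobian $J_t = J\phi_t$ (with $J_0 = 1$), one has
\begin{align*}
E[\phi_t(\Sigma)] = \int_\Sigma e^{|\phi_t(x)|^2/4}\, J_t(x)\, d\mathcal{H}^n(x),
\end{align*}
so the task reduces to Taylor expanding the two factors $e^{|\phi_t|^2/4}$ and $J_t$ to order $t^2$ and collecting the coefficients.

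For the weight factor, since $\phi_t(x) = x + t f(x)\nu_\Sigma(x) + O(t^2)$, one finds $|\phi_t|^2 = |x|^2 + 2t f(x\cdot\nu_\Sigma) + t^2 f^2 + O(t^3)$, so
\begin{align*}
e^{|\phi_t|^2/4} = e^{|x|^2/4}\Bigl(1 + \tfrac{t}{2}f(x\cdot\nu_\Sigma) + \tfrac{t^2}{4} f^2 + \tfrac{t^2}{8} f^2(x\cdot\nu_\Sigma)^2 + O(t^3)\Bigr).
\end{align*}
For the Jacobian, using $d\phi_t(e_i) = e_i - t f A_\Sigma(e_i) + t(\nabla^\Sigma_{e_i} f)\nu_\Sigma$, one computes the induced metric $g_t$, takes the square root of $\det(g_t)/\det(g_0)$, and obtains
\begin{align*}
J_t = 1 - tHf + \tfrac{t^2}{2}\bigl(|\nabla^\Sigma f|^2 - f^2|A_\Sigma|^2 + f^2 H^2\bigr) + O(t^3).
\end{align*}

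Multiplying the two expansions, the $t^1$-coefficient recovers the first variation $\int_\Sigma f\bigl(\tfrac{1}{2}(x\cdot\nu_\Sigma) - H\bigr) e^{|x|^2/4}\, d\mathcal{H}^n$, whose vanishing is precisely the self-expander equation $H = \tfrac{1}{2}(x\cdot\nu_\Sigma)$. For the $t^2$-coefficient, using the same identity to substitute $x \cdot \nu_\Sigma = 2H$ causes all $H$-dependent terms to cancel: the Jacobian contribution $\tfrac{1}{2}f^2 H^2$, the cross term $-\tfrac{1}{2}f^2 H(x\cdot\nu_\Sigma) = -f^2 H^2$, and the weight contribution $\tfrac{1}{8}f^2(x\cdot\nu_\Sigma)^2 = \tfrac{1}{2}f^2 H^2$ sum to zero. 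What remains is
\begin{align*}
\tfrac{d^2}{dt^2}E[\phi_t(\Sigma)]\big|_{t=0} = \int_\Sigma \Bigl(|\nabla^\Sigma f|^2 - f^2|A_\Sigma|^2 + \tfrac{1}{2} f^2\Bigr) e^{|x|^2/4}\, d\mathcal{H}^n.
\end{align*}

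The last step is a weighted integration by parts. Since $\nabla^\Sigma e^{|x|^2/4} = \tfrac{1}{2} x^T e^{|x|^2/4}$, one has
\begin{align*}
\int_\Sigma |\nabla^\Sigma f|^2 e^{|x|^2/4}\, d\mathcal{H}^n = -\int_\Sigma f\Bigl(\Delta_\Sigma f + \tfrac{1}{2} x\cdot\nabla^\Sigma f\Bigr) e^{|x|^2/4}\, d\mathcal{H}^n,
\end{align*}
which produces the drift term of $L_\Sigma = \Delta_\Sigma + \tfrac{1}{2} x\cdot\nabla_\Sigma + |A_\Sigma|^2 - \tfrac{1}{2}$. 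Substituting yields $-\int_\Sigma f L_\Sigma f\, e^{|x|^2/4}\, d\mathcal{H}^n$, the second variation formula (understood with respect to the weighted measure $e^{|x|^2/4} d\mathcal{H}^n$, which is the one for which $L_\Sigma$ is self-adjoint). The main technical obstacle is the $O(t^2)$ expansion of the Jacobian, a standard but tedious Simons-type calculation; once that is in hand, the rest is bookkeeping that repeatedly uses the self-expander equation to cancel $H$-terms, followed by the weighted integration by parts. A conceptually cleaner alternative is to observe that self-expanders are minimal hypersurfaces in the conformal metric $\bar g = e^{|x|^2/(2n)}\delta$ and then invoke the standard second variation for minimal hypersurfaces in a Riemannian manifold, computing the Ricci correction and conformal change of $|A|^2$ explicitly.
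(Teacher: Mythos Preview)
The paper does not give its own proof of this statement; it simply cites Proposition~4.2 of \cite{BWSpace}. Your direct computation is correct and is the standard route: expand the weight and the tangential Jacobian to second order, use the self-expander equation $H=\tfrac{1}{2}(x\cdot\nu_\Sigma)$ to kill the $H$-terms, and finish with the weighted integration by parts that produces the drift $\tfrac{1}{2}x\cdot\nabla_\Sigma$. Your bookkeeping checks out, and you even correctly observe that the answer is $-\int_\Sigma f L_\Sigma f\, e^{|x|^2/4}\, d\mathcal{H}^n$, i.e.\ the weight $e^{|x|^2/4}$ belongs in the formula (the paper's stated version omits it, which is a slip; $L_\Sigma$ is self-adjoint precisely for this weighted measure, as you note).

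One small gap worth closing explicitly: you write $\phi_t(x) = x + t f\nu_\Sigma + O(t^2)$ and then silently drop the $O(t^2)$ acceleration term when expanding $|\phi_t|^2$ and $J_t$. That term \emph{does} enter at order $t^2$ in general; the reason it can be ignored here is that $\Sigma$ is a critical point of $E$, so the contribution of the acceleration is a first-variation term and hence vanishes. It is a one-line remark, but since the theorem as stated does not hypothesize $\Sigma$ to be a self-expander (it is implicit from the surrounding text), you should say it. Your alternative suggestion---viewing $\Sigma$ as minimal in the conformal metric $e^{|x|^2/(2n)}\delta$ and invoking the Riemannian second variation---is also valid and is closer in spirit to how such formulas are often derived in the literature.
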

A real number $\mu \in \mathbb{R}$ is an eigenvalue for $-L_\Sigma$ if there is a function $u \in W^1_{\frac{1}{4}}(\Sigma) \setminus \{0\}$ such that $-L_\Sigma u = \mu u$, where 
\begin{align*}
	W^1_{\frac{1}{4}}(\Sigma) = \{f: \Sigma \to \mathbb{R} \mid \int_\Sigma (\abs{f}^2 + \abs{\nabla f}^2)e^{\frac{\abs{x}^2}{4}} d\mathcal{H}^n < \infty\}.
\end{align*}
The \textit{index} of a self-expander is the number of negative eigenvalues of $-L_\Sigma$, which is equal to
\begin{align*}
	\sup \{\dim V \mid V \text{ linear subspace } \subset C_0^2(\Sigma), -\int_{\Sigma} f L_\Sigma f \le 0 \; \forall f \in V \setminus \{0\}\}.
\end{align*}
We say a self-expander is \textit{stable} if it has index zero. By Lemma 4.1 in \cite{BWIntegerDegree}, the operator $L_\Sigma$ is formally adjoint in $W^0_{\frac{1}{4}}(\Sigma)$, and $L_\Sigma$ has a discrete spectrum; that is, the eigenvalues of $-L_\Sigma$ can be ordered as $\mu_1 < \mu_2 < \cdots < \mu_n < \cdots$. Moreover, the space of eigenfunctions associated to the lowest eigenvalue $\mu_1$ is 1-dimensional, and any eigenfunction $f$ of $\mu_1$ has a sign. \par 
We need the following basic distance estimates of asymptotically conical self-expanders.
\begin{prop}
	\label{expander-curvature-estimates}
	Let $\mathcal{C}$ be a smooth cone. Suppose $\Sigma$ is a self-expander $C^{2,\alpha}$-asymptotic to $\mathcal{C}$, then there is $N > 0$ such that $\Sigma \setminus B_{NR}(0) \subset \mathcal{T}_{R^{-1}}(\mathcal{C})$ for $R > 1$.
\end{prop}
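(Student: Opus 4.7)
The plan is to reduce this elliptic estimate on the self-expander to a short-time parabolic estimate on its rescaling MCF, and then extract the linear-in-time rate from pseudolocality combined with interior parabolic Schauder.

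First I would set up the rescaling. Since $\Sigma$ is a self-expander, the family $M_s := \sqrt{s}\,\Sigma$ is a smooth MCF, invariant under the parabolic rescaling $(x, s) \mapsto (\lambda x, \lambda^2 s)$, and by the $C^{2,\alpha}$-asymptotic conicality $M_s \to \mathcal{C}$ in $C^{2,\alpha}_{\loc}(\mathbb{R}^{n+1}\setminus\{0\})$ as $s \to 0^+$. Because $\mathcal{C}$ is a cone, $\dist(p,\mathcal{C}) = \rho\,\dist(p/\rho,\mathcal{C})$ for $\rho = \abs{p}$, and the dilation $x \mapsto x/\rho$ sends $\Sigma$ to $M_{1/\rho^2}$, so a point $p \in \Sigma$ with $\abs{p}=\rho$ corresponds to a unit-norm point $p/\rho \in M_{1/\rho^2}$. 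A direct check then shows the lemma is equivalent to finding constants $C, s_0 > 0$ with
\begin{align*}
	M_s \cap \{\abs{x} = 1\} \subset \mathcal{T}_{Cs}(\mathcal{C}) \qquad \text{for all } s \in (0, s_0],
\end{align*}
after which $N := \max\{C, s_0^{-1/2}\}$ suffices in the statement.

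To prove this parabolic bound, fix $x_0 \in \mathcal{C}\cap\{\abs{x}=1\}$. Smoothness and compactness of $\mathcal{L}(\mathcal{C})$ yield a uniform $r_0 > 0$ such that $\mathcal{C}\cap B_{r_0}(x_0)$ is a smooth graph over $T_{x_0}\mathcal{C}$ with uniform $C^{2,\alpha}$-bounds. By the pseudolocality theorem for Brakke flows (Theorem 1.5 of \cite{IlmanenNevesSchulze}), for any $\eta > 0$ there is $s_1 \in (0, r_0^2)$, depending only on $\eta$ and these local bounds, such that for every $s \in (0, s_1]$ the slice $M_s \cap C_{r_0/2}(x_0)$ is a smooth graph over $B^n_{r_0/2}(x_0) \cap T_{x_0}\mathcal{C}$ with graph function $v(\cdot, s)$ of Lipschitz constant at most $\eta$ and initial datum $v(\cdot, 0) = u_0$, the smooth graph function of $\mathcal{C}$. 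Then $v$ satisfies a uniformly parabolic quasilinear MCF equation with $C^{2,\alpha}$ initial data, and standard parabolic Schauder estimates bound $\|\partial_s v\|_{C^0}$ uniformly on a slightly smaller cylinder $B^n_{r_0/4}(x_0) \times (0, s_1/2]$. Integrating in $s$ yields $\abs{v(\cdot, s) - u_0} \le Cs$, hence $M_s \cap C_{r_0/4}(x_0) \subset \mathcal{T}_{Cs}(\mathcal{C})$.

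Covering the compactum $\mathcal{C}\cap\{\abs{x}=1\}$ by finitely many such neighborhoods produces uniform constants $C$ and $s_0$; the $C^{2,\alpha}_{\loc}$ asymptotic convergence guarantees that for sufficiently small $s$ every point of $M_s \cap \{\abs{x}=1\}$ lies inside this finite cover, and undoing the rescaling gives the lemma. The main subtle point is obtaining the linear-in-time decay, rather than the $O(\sqrt{s})$ rate a bare shrinking-sphere barrier would give: this relies on the smoothness of the initial datum $u_0$ combined with the a priori Lipschitz control from pseudolocality, which together allow parabolic Schauder to bound $\partial_s v$ uniformly up to $s = 0$. Once this linear-in-time rate is in hand, the parabolic rescaling converts it directly into the $\abs{x}^{-1}$ spatial decay asserted in the lemma.
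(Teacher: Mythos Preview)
Your proof is correct but takes a genuinely different route from the paper's. The paper argues directly and elliptically: the $C^{2,\alpha}$-convergence $\rho\Sigma \to \mathcal{C}$ immediately gives a uniform bound $|A_{\rho\Sigma}| \le C$ on annuli away from the origin, which rescales to $|A_\Sigma(x)| \le C|x|^{-1}$ for large $|x|$; then the self-expander equation $2H_\Sigma = x^\perp$ yields $|x^\perp| \le C|x|^{-1}$, and since $\Sigma$ is $C^1$-close to the cone at large scales this normal component controls $\dist(x,\mathcal{C})$. That is essentially a two-line argument once the hypothesis is unpacked.

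Your parabolic approach---recasting $\Sigma$ as the self-similar flow $M_s = \sqrt{s}\,\Sigma$ and extracting an $O(s)$ rate for $M_s \to \mathcal{C}$ on the unit sphere via Schauder control of $\partial_s v$---is valid, and since it never uses the expander equation explicitly it would apply to any smooth MCF attaining a smooth cone as initial data. That said, your appeal to pseudolocality is a detour: the hypothesis already asserts $M_s \to \mathcal{C}$ in $C^{2,\alpha}_{\loc}(\mathbb{R}^{n+1}\setminus\{0\})$, which directly furnishes uniform $C^2$ bounds on the graph function $v(\cdot,s)$ near $\mathcal{L}(\mathcal{C})$ and hence, via the graphical MCF equation, the uniform bound on $\partial_s v$ you need. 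Once pseudolocality is removed, your argument is really the parabolic rephrasing of the paper's: the paper bounds $|A_\Sigma|$ and reads off $|x^\perp|$ from the expander equation, while you bound $|D^2 v|$ and read off $|\partial_s v|$ from the evolution equation---and under the rescaling these are the same quantity.
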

\begin{proof}
	Since $\Sigma$ is smooth and $\rho \Sigma \to \mathcal{C}$ in $C^{2,\alpha}_{loc}(\mathbb{R}^n \setminus \{0\})$, for sufficiently small $\rho$ we have on $\rho\Sigma \cap (B_{1}(0) \setminus \{0\})$
	\begin{align*}
		\abs{A_{\rho\Sigma}} \le C.
	\end{align*}
	Hence for $x \in \Sigma$ with $\abs{x}$ sufficiently large depending on the above,
	\begin{align*}
		\abs{A(x)} = \rho \abs{A_{\rho \Sigma}(\rho x)} \le C\abs{x}^{-1}
	\end{align*}
	if we pick $\rho = \frac{1}{2}\abs{x}^{-1}$ so that $\rho x \in B_1(0)$. This proves that $\abs{A(x)} \le C \abs{x}^{-1}$ for all $x \in \Sigma$. 
	Together with the self-expander equation, these imply that there is $C > 0$ with
	\begin{align*}
		\dist(x, \mathcal{C}) < C \abs{x}^{-1}
	\end{align*}
	for $x \in \mathcal{C} \setminus B_1(0)$. Finally by scaling it follows that there is $N$ such that for $R \ge 1$.
	\begin{align*}
		\Sigma \setminus B_{NR}(0) \subset \mathcal{T}_{R^{-1}}(\mathcal{C}). &\qedhere
	\end{align*}
\end{proof}
\begin{rem}
	Note that similar to the above we can also estimate the derivatives of $A$: 
	\begin{align*}
		\abs{\nabla^m A} \le C \abs{x}^{-m-1},
	\end{align*}
	provided the cone is sufficiently regular.
\end{rem}

\subsection{Mean curvature flow with boundary}
\label{prelim-mcf-boundary}
Since we are dealing with noncompact initial hypersurfaces, many existence theorems (in particular the unit density theorem 11.4 in \cite{Ilmanen}) do not apply directly in our case. To account for this we will utilize White's recent work on MCF with boundary \cite{WhiteMCFBoundary}, which is a generalization of the Brakke flow in \cref{brakkeflow}. For simplicity we will only work in the ambient manifold $\overbar{B_R(0)}$. \par 
Given a hypersurface $\Sigma$ with boundary $\Gamma \subset \partial B_R(0)$ in an open set $U \subset \overbar{B_R(0)}$ and an integral $n$-rectifiable Radon measure $\mu$, the first variation formula becomes:
\begin{align*}
	\int \Div_{V(\mu)} X d\mu = -\int H \cdot X d\mu + \int \nu_\mu \cdot X d(\mathcal{H}^{n-1} \llcorner \Gamma)
\end{align*}
for any compactly supported $C^1$ vector field $X$, where $H$ is the generalized mean curvature vector and $\nu$ the approximating normal vector to $\Gamma$. By an \textit{integral $n$-Brakke flow with boundary $\Gamma$} in $U$ we mean a family of integral $n$-rectifiable Radon measures $\mathcal{M} = \{\mu_t\}_{t \in I}$ satisfying the items (a), (b) and (c) in \cref{brakkeflow} with the extra condition:
\begin{enumerate}[label = (\alph*)]
	\setcounter{enumi}{3}
	\item For a.e. $t \in I$ the normal vector satisfies $\abs{\nu_{\mu_t}} \le 1$ $\mathcal{H}^{n-1}$-a.e on $\Gamma$.
\end{enumerate}
For simplicity we will refer to the above as Brakke flow with boundary $\Gamma$. This is unambiguous since, by item (d), the boundary $\Gamma$ stays unchanged under the Brakke flow. \par 
As before, a Brakke flow with boundary $\Gamma$ is unit-regular if, for a spacetime point $X = (x,t)$, $\mathcal{M}$ is smooth and has no sudden mass loss if a tangent flow at $X$ is a multiplicity one plane. $\mathcal{M}$ is cyclic if the associated mod-2 flat chain $[V(\mu_t)]$ has boundary equal to $\Gamma$. By works of White \cite{WhiteMCFBoundary}, Brakke flows with boundary $\Gamma$ produced by elliptic regularization are unit-regular and cyclic.
\begin{thm}[Theorem 1.1, Theorem 14.1 of \cite{WhiteMCFBoundary}]
	\label{brakkeflow-with-boundary}
	Let $\Sigma \subset \overbar{B_R(0)}$ be a hypersurface with boundary $\Gamma \subset \partial B_R(0)$. There exists a unit-regular and cyclic Brakke flow with boundary $\Gamma$, $\mathcal{M} = \{\mu_t\}_{t \in [0,\infty)}$ with $\mu_0 = \mathcal{H}^{n} \llcorner \Sigma$.
\end{thm}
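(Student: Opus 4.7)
The approach is to adapt Ilmanen's elliptic regularization from \cite{Ilmanen}, originally stated for closed hypersurfaces, to the boundary setting, following White \cite{WhiteMCFBoundary}. The central idea is to trade the parabolic problem of producing a Brakke flow for an elliptic weighted area problem in one higher ambient dimension, and then recover the flow by a limit. Concretely, for each $\varepsilon > 0$ I would work in $\overbar{B_R(0)} \times \mathbb{R}$ with coordinates $(x,z)$ and minimize the translator-type functional
\begin{align*}
	F_\varepsilon[M] = \int_M e^{-z/\varepsilon}\, d\mathcal{H}^{n+1}
\end{align*}
over $(n+1)$-dimensional integral currents with mod-$2$ boundary $(\Gamma \times \mathbb{R}) \cup (\Sigma \times \{0\})$, in an exhaustion of the slab by cylinders of large height. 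Any minimizer $M_\varepsilon$ satisfies the translator equation $H_{M_\varepsilon} = \varepsilon^{-1} e_z^\perp$, and its horizontal slices, reparametrized by $t = \varepsilon z$, yield a family $\{\mu_t^\varepsilon\}_{t\ge 0}$ which approximates a mean curvature flow with boundary $\Gamma$.

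The second step is to send $\varepsilon \to 0$. Using the Huisken-type monotonicity for translators together with standard compactness for integral varifolds of locally bounded first variation, one extracts a subsequential limit $\mathcal{M} = \{\mu_t\}_{t\ge 0}$ with $\mu_0 = \mathcal{H}^n \llcorner \Sigma$. Conditions (a)--(c) in \cref{brakkeflow} for the limit follow by a direct computation from the translator equation for $M_\varepsilon$ combined with Fubini in the $z$-variable, exactly as in Ilmanen's closed case. The new boundary condition (d) is inherited because each $M_\varepsilon$ has fixed boundary $\Gamma \times \mathbb{R}$, so the slicing theorem gives $\partial \mu_t^\varepsilon = \mathcal{H}^{n-1}\llcorner\Gamma$ for a.e. $t$; the boundary-first-variation estimate $\abs{\nu_{\mu_t}} \le 1$ then passes to the limit. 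Cyclicity is preserved for the same reason, as the $M_\varepsilon$ are mod-$2$ minimizers, so slices and varifold limits retain the flat chain boundary relation. Unit-regularity comes from the fact that wherever a tangent flow is a multiplicity-one plane, either Allard's interior regularity or White's boundary regularity theorem applies to the approximating $M_\varepsilon$, giving smooth convergence and hence no sudden mass loss at that point.

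The main obstacle is the boundary behavior: Ilmanen's construction is genuinely interior, and one must replace each invocation of Allard's interior theorem by a boundary version, while simultaneously ensuring that no mass concentrates at $\Gamma$ or escapes through $\partial B_R(0)$ under the slicing and limiting procedures. This is precisely what White accomplishes in \cite{WhiteMCFBoundary} by combining his boundary Allard-type theorem with well-chosen barriers near $\partial B_R(0)$ to control the boundary term of the first variation. With these boundary-regularity and barrier inputs in hand, the remaining verifications of Brakke's inequality, cyclicity, and immortality are parallel to the closed-surface case, so rather than reprove the full theorem here one simply cites \cite{WhiteMCFBoundary}.
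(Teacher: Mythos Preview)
The paper does not prove this theorem at all; it is stated purely as a citation of White's results (Theorem 1.1 and Theorem 14.1 of \cite{WhiteMCFBoundary}) and used as a black box in \cref{prop32}. Your proposal goes further by sketching the elliptic regularization mechanism behind White's theorem, but ultimately you also conclude by deferring to \cite{WhiteMCFBoundary}. So there is no discrepancy in approach to speak of: both you and the paper treat this as an imported result, and your sketch is a reasonable (if informal) summary of what White actually does.
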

Similarly Brakke flow with boundary needs not be unique, but White's theorem says that a unit-regular and cyclic one always exists. White proved in addition a strong boundary regularity theorem (Theorem 17.1 of \cite{WhiteMCFBoundary}) in the codimension one case, ruling out a scenario where interior singularities could accumulate to a boundary singularity. Hence the boundary $\Gamma$ truly remains unchanged in the classical sense.  

\subsection{Level set flows and matching motions}
The final ingredient we need is the set-theoretic generalization of MCF, initially developed by \cite{ChenGigaGoto} and \cite{EvansSpruck} as viscosity solutions to certain PDEs. Given a closed set $\Gamma_0 \subset \mathbb{R}^{n+1}$, we choose any uniformly continuous function $u_0$ such that $\Gamma_0 = \{x \in \mathbb{R}^{n+1}\mid u_0(x) = 0\}$. There exists a unique $u \in C(\mathbb{R}^{n+1} \times [0,\infty))$ which is a viscosity solution to the problem
\begin{align*}
	\begin{cases}
		u_t = \sum_{i,j=1}^{n+1} \left(\delta_{ij} - \frac{u_{x_i}u_{x_j}}{\abs{\nabla u}^2}\right)u_{x_ix_j} & \text{ on } \mathbb{R}^{n+1} \times [0,\infty) \\
		u(x,0) = u_0(x) & \text{ on } \mathbb{R}^{n+1} \times \{0\}.
	\end{cases}
\end{align*}
Let $\Gamma_t = \{x \in \mathbb{R}^{n+1} \mid u(x,t) = 0\}$. We call $\mathcal{K} = \bigcup_{t \in [0,\infty)} \Gamma_t \times \{t\}$ the \textit{level set flow} of $\Gamma_0$.  \par 
Since the viscosity solution is unique, level set flow is also unique with given initial data. However, level set flows might fatten, i.e. $\mathcal{K}$ might develop a non-empty interior (for example the figure eight fattens immediately). Formally, the level set flow of $\Gamma_0$ fattens if $\mathcal{H}^{n+1}(\Gamma_t) > 0$ for some $t > 0$. A theorem of Ilmanen (11.3 in \cite{Ilmanen}) shows that fattening phenomenon is not generic and can therefore be perturbed away. \par 
Alternatively, it has been observed that the level set flow can be characterized as the "biggest flow" of a closed set satisfying the avoidance principle. There is a rich literature on this more geometrically intuitive way of handling set flows and we refer to \cite{WhiteTopology}, \cite{HershkovitsWhite} and \cite{BCW} for more information on this approach. \par

Ilmanen \cite{Ilmanen} combined ideas from Brakke flows and level set flows and introduced the notion of a matching motion, which turns out to be the suitable notion for our purposes. Let $I_n(U)$ be the set of integral $n$-current in $U$.
\begin{defn}[8.1, 9.1 of Ilmanen \cite{Ilmanen}]
	Let $\mathcal{K} \in I_{n+1}(\mathbb{R}^{n+1} \times \mathbb{R}^+)$, $\mathcal{M} = \{\mu_t\}_{t \in [0,\infty)}$ be a Brakke flow and $\Gamma_0 \in I_n(\mathbb{R}^{n+1})$ with finite mass and empty boundary. A pair $(\mathcal{K},\mathcal{M})$ is an \textit{enhanced motion} with initial data $\Gamma_0$ if
	\begin{enumerate}[label = (\alph*)]
		\item $\partial \mathcal{K} = \Gamma_0$ and $\mathcal{K}_t \in I_n(\mathbb{R}^{n+1})$ for a.e. $t \ge 0$;
		\item $\partial \mathcal{K}_t = 0$ and $t \to \mathcal{K}_t$ is continuous in the flat topology for $t \ge 0$;
		\item $\mu_0 = \mu_{\Gamma_0}$, $\mathbb{M}[\mu_t] \le \mathbb{M}[\mu_0]$ and $\mu_{\mathcal{K}_t} \le \mu_t$ for a.e. $t \ge 0$.
	\end{enumerate}
	If the pair $(\mathcal{K},\mathcal{M})$ further satisfies
	\begin{enumerate}[label = (\alph*),resume]
		\item $\mu_t = \mu_{\mathcal{K}_t} = \mu_{V(\mu_t)}$ for $t \ge 0$,
	\end{enumerate}
	then we call it a \textit{matching motion}.
\end{defn}
Note that in the above definition we have already abused some notation. Indeed, in our applications $\mathcal{K}$ is going to be the level set flow from $\Gamma_0$. A fundamental result of Ilmanen (Section 12 of \cite{Ilmanen}) shows that a nonfattening level set flow is a matching motion, which justifies our abuse of notation here. \par 
We will use the following result of S. Wang \cite{Shengwen} which asserts that limit of low entropy matching motions is a matching motion. Recall that a sequence of Brakke flow $\mathcal{M}_i = \{\mu_t^i\}_{t \in [0,\infty)}$ converges to $\mathcal{M} = \{\mu_t\}_{t \in [0,\infty)}$ if $\mu_t^i \to \mu_t$ as Radon measures and, after possibly passing to a subsequence, $V(\mu_t^i) \to V(\mu_t)$ as varifolds for a.e $t \in [0,\infty)$. 
\begin{prop}[Theorem 3.5 of \cite{Shengwen}]
	\label{limit-matching-motion}
	Let $(\mathcal{K}_i,\mathcal{M}_i)$ be a sequence of matching motions converging to an enhanced motion $(\mathcal{K},\mathcal{M})$ with $\lambda[\mathcal{M}] < 2$, then $(\mathcal{K},\mathcal{M})$ is a matching motion.
\end{prop}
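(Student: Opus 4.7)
The plan is to verify the matching condition (d) for the limit pair $(\mathcal{K},\mathcal{M})$, since items (a)--(c) of enhanced motion are assumed to pass to the limit. Concretely, I must show $\mu_t = \mu_{V(\mu_t)} = \mu_{\mathcal{K}_t}$ for $t \ge 0$. The entropy bound $\lambda[\mathcal{M}]<2$ is the sole extra input, and it enters the argument twice: once to enforce multiplicity one in the varifold limit, and once to rule out mass cancellation in the flat limit of currents.

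For the identity $\mu_t = \mu_{V(\mu_t)}$, I would argue that $V(\mu_t)$ has multiplicity one $\mu_t$-a.e.\ for a.e.\ $t$. By Huisken's monotonicity together with the definition of entropy, the Gaussian density satisfies $\Theta(\mathcal{M},(x,t)) \le \lambda[\mathcal{M}] < 2$ at every spacetime point. Since $V(\mu_t)$ is integral for a.e.\ $t$ and a point of multiplicity $k$ has Gaussian density at least $k$ (via the trivial mult-$k$ hyperplane tangent), integrality combined with the strict density upper bound forces multiplicity one $\mu_t$-a.e., mirroring the reasoning in the proof of \cref{tameness}.

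For the identity $\mu_t = \mu_{\mathcal{K}_t}$, the inequality $\mu_{\mathcal{K}_t} \le \mu_t$ is already built into the enhanced motion condition, so only the reverse must be shown. For a.e.\ $t$ the prelimit matching identity reads $\mathbb{M}(\mathcal{K}_{i,t}\llcorner U) = \mu_t^i(U) = \mathbb{M}(V(\mu_t^i)\llcorner U)$ on every open $U$. Varifold convergence sends the rightmost term to $\mathbb{M}(V(\mu_t)\llcorner U) = \mu_t(U)$ on sets $U$ with $\mu_t(\partial U)=0$, while flat slicing yields $\mathcal{K}_{i,t}\to \mathcal{K}_t$ as integral currents with the standard lower semicontinuity $\mathbb{M}(\mathcal{K}_t\llcorner U) \le \liminf \mathbb{M}(\mathcal{K}_{i,t}\llcorner U)$. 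Passing to the limit gives $\mu_{\mathcal{K}_t}(U)\le \mu_t(U)$, and the crucial claim is that equality holds: any strict drop would represent flat cancellation, i.e.\ oppositely oriented sheets collapsing, but the same two sheets would survive in the varifold limit as a multiplicity-two layer, contradicting the first step. Hence $\mu_{\mathcal{K}_t}=\mu_t$.

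The main obstacle is making the no-cancellation step airtight. I expect the cleanest route to be pointwise: if $\mu_{\mathcal{K}_t} < \mu_t$ on some open ball, I would select a Lebesgue point $x$ of the defect at which $V(\mu_t)$ has a multiplicity-one tangent plane, produced by Allard's theorem together with the multiplicity-one conclusion of the previous step. Rescaling around $x$ would yield this tangent plane as the varifold blow-up, while the flat blow-up of $\mathcal{K}_t$ would be of strictly smaller mass, impossible for a multiplicity-one integral current sitting on the same rectifiable set. It is precisely here that $\lambda[\mathcal{M}] < 2$ is indispensable: without it, a limit with a multiplicity-two sheet could be orientation-cancelling in the current while genuine in the varifold, breaking the matching property.
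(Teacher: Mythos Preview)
The paper does not prove this proposition; it is quoted verbatim from \cite{Shengwen} (Theorem 3.5) and stated without proof. The only accompanying material is the remark immediately following it, which explains via the grim reaper example why the hypothesis $\lambda[\mathcal{M}]<2$ cannot be dropped: the varifold limit of translated grim reapers is a multiplicity-two line while the flat limit of the associated currents vanishes by orientation cancellation. There is therefore no proof in this paper against which to compare your attempt.

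That said, your sketch correctly isolates the two places the entropy bound enters and is consistent with the heuristic behind the cited remark. The first step (multiplicity one from $\Theta<2$ and integrality) is essentially complete. The second step is where the substance lies, and your outline is plausible but not yet a proof: one must justify that the slices $\mathcal{K}_{i,t}$ converge flatly to $\mathcal{K}_t$ for a.e.\ $t$ (this uses the slicing theory for the spacetime current $\mathcal{K}$ together with lower semicontinuity of mass), and the blow-up comparison at a Lebesgue point of the defect requires care to align the varifold tangent with the current tangent. For a rigorous treatment you should consult \cite{Shengwen} directly.
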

\begin{rem}
	The theorem fails without the entropy assumption as the set-theoretic limit of a sequence of grim reapers (which has entropy 2) is two lines but the limit in the sense of currents is empty (as the two lines cancel each other). 
\end{rem} 

\subsection{Construction of the smooth flow}
\label{construction-of-smooth-flow}
Before we construct the weak flow, we briefly summarize the construction of Morse flow line starting from an unstable self-expander \cite{BWTopologicalUniqueness}. For reader's convenience we have also included a summary of the theorems with some proofs in \cref{smooth-construction-appendix}. \par 
Recall that for an unstable self-expander $\Sigma$, the stability operator $L_\Sigma$ has discrete spectrum and eigenfunctions to the lowest eigenvalue $\mu_1 < 0$ have a sign. Let $f$ be the unique positive eigenfunction of $\mu_1$ with $\norm{f}_{W^0_{\frac{1}{4}}(\Sigma)} = 1$. For $\varepsilon > 0$ we form the perturbations of $\Sigma$ by $f$ given by
\begin{align}
\label{sigma-epsilon}
	\Sigma^\varepsilon = \Psi^\varepsilon(\Sigma) \text{ where } \Psi^\varepsilon(x) = x + \varepsilon f(x) \nu_\Sigma.
\end{align}
By \cref{eigenfunction-decay-estimates}, there is $N> 0$ depending on $\varepsilon$ such that $\Sigma^\varepsilon \setminus B_{NR}(0) \subset \mathcal{T}_{R^{-1}}(\mathcal{C})$ for all $R > 1$. By \cref{expander-mean-convexity}, $\Sigma^\varepsilon$ is expander mean-convex (see \cref{smooth-construction-appendix} for the definition) for $\varepsilon$ sufficiently small. By the existence theorem \cref{existence-theorem} there is a unique MCF starting from an asymptotically conical, expander mean-convex hypersurface, and the expander-mean-convexity is preserved along the flow until the first singular time. Applying \cref{existence-theorem} to $\Sigma^\varepsilon$, we get for each $\varepsilon$ a unique MCF $\mathcal{M}^\varepsilon = \{\Sigma^\varepsilon_t\}_{t \in [1,T^\varepsilon)}$ with $\Sigma^\varepsilon_1 = \Sigma^\varepsilon$. Moreover, by \cref{eigenfunction-decay-estimates} and \cref{expander-curvature-estimates}, $\Sigma^\varepsilon$ has uniformly bounded curvature, so the interior estimates of Ecker--Huisken \cite{EHInterior} implies that the interval of existence of is independent of $\varepsilon$. Moreover, at first singular time $T^\varepsilon$, 
\begin{align*}
	\lim_{t \to T^\varepsilon} \sup_{\Sigma^\varepsilon_t \cap B_{N'\sqrt{t}}} \abs{A_{\Sigma_t^\varepsilon}} = \infty
\end{align*}
for some constant $N' > 0$. \par 

\subsection{Construction of the Brakke flow}
We now turn to our construction of the weak flow. One of the method to construct a weak flow is via capping off the hypersurfaces $\Sigma^\varepsilon \cap \overbar{B_R(0)}$ smoothly and taking a sequence of Brakke flows (or weak set flows) starting from these capped-off hypersurfaces. However, one has to be careful with the above method when working with entropy bounds, since the cap might increase the entropy. On the other hand, the capping method has the advantage that, by a suitable choice of the caps, the expander mean-convexity is preserved through the cap and hence in the limits (here with weak set flows one has to interpret the mean-convexity in the weak sense as well). Such a construction for self-shrinkers can be found in Section 7 of \cite{CCMSGeneric} and, for self-expanders, in \cite{BCW}. \par 
Here we use an alternative approach of Brakke flow with boundary. This construction is less technical and respects the entropy well. However, it is also less clear how expander mean-convexity is preserved through the flows and hence in the limit. This is not needed in our case because we are only concerned with existence. It is an interesting question to determine whether the flows constructed from the two methods above agree (they are, of course, the same notion when smooth, so the point is to determine how each of them flows past singularities). We believe this should be the case with the entropy bound $\lambda[\mathcal{C}] < 2$, but the general picture might be less clear. \par 
The following proposition is the weak flow analogy of the smooth flow produced in \cref{existence-theorem}.
\begin{prop}
	\label{prop32}
	Let $\mathcal{C}, \Sigma$ be as in \cref{nontrivial-flow-lines}, and $\Sigma^\varepsilon$ be as in \cref{sigma-epsilon}. There exists $\varepsilon_0 > 0$ such that, for $\abs{\varepsilon} < \varepsilon_0$, there exists an immortal matching motion $(\mathcal{K}^\varepsilon,\mathcal{M}^\varepsilon)$ where $\mathcal{K}^\varepsilon = \{\Gamma^\varepsilon_t\}_{t \in [1,\infty)}$ and $\mathcal{M}^\varepsilon = \{\mu^\varepsilon_t\}_{t \in [1,\infty)}$ such that $\Gamma^\varepsilon_1 = \Sigma^\varepsilon$ and $\mu^\varepsilon_1 = \mathcal{H}^{n} \llcorner \Sigma^\varepsilon$. Moreover the flow $(\mathcal{K}^\varepsilon,\mathcal{M}^\varepsilon)$ agrees with the smooth flow starting from $\Sigma^\varepsilon$ for $t \in [1,T^\varepsilon)$.
\end{prop}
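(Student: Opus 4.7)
The plan is to construct $(\mathcal{K}^\varepsilon,\mathcal{M}^\varepsilon)$ as a subsequential limit of White's Brakke flows with boundary supported in larger and larger balls, and then to invoke \cref{limit-matching-motion} to promote the enhanced-motion limit to a matching motion. The entropy bound $\lambda[\mathcal{C}] < 2$ is what makes this final step possible: by Huisken monotonicity applied to $\{\sqrt{t}\Sigma\}_{t>0}$ one has $\lambda[\Sigma] \leq \lambda[\mathcal{C}] < 2$, and by continuity of entropy under the $C^2$-small normal perturbation $\Psi^\varepsilon$ (whose generator $f$ enjoys the decay established in \cref{eigenfunction-decay-estimates}) we may choose $\varepsilon_0$ so small that $\lambda[\Sigma^\varepsilon] < 2$ whenever $\abs{\varepsilon} < \varepsilon_0$.

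Fix such an $\varepsilon$. By the asymptotic decay estimates of \cref{expander-curvature-estimates} and \cref{eigenfunction-decay-estimates}, for all sufficiently large $R$ the sphere $\partial B_R(0)$ meets $\Sigma^\varepsilon$ transversely, so the truncation $\Sigma^{\varepsilon,R} := \Sigma^\varepsilon \cap \overbar{B_R(0)}$ is a smooth compact hypersurface with smooth boundary $\partial \Sigma^{\varepsilon,R} \subset \partial B_R(0)$. Apply \cref{brakkeflow-with-boundary} inside $\overbar{B_R(0)}$, started at $t = 1$ from $\mathcal{H}^n \llcorner \Sigma^{\varepsilon,R}$, to obtain a unit-regular, cyclic, immortal Brakke flow with boundary $\partial \Sigma^{\varepsilon,R}$, denoted $\mathcal{M}^{\varepsilon,R}$, together with an associated integral current flow $\mathcal{K}^{\varepsilon,R}$ satisfying $\partial \mathcal{K}^{\varepsilon,R}_t = \partial \Sigma^{\varepsilon,R}$ for all $t \geq 1$. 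By unit regularity, cyclicity, and the uniform entropy bound $\lambda[\mathcal{M}^{\varepsilon,R}] \leq \lambda[\Sigma^{\varepsilon,R}] < 2$ (from Huisken monotonicity), the pair $(\mathcal{K}^{\varepsilon,R},\mathcal{M}^{\varepsilon,R})$ is itself a matching motion in $\overbar{B_R(0)}$.

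Send $R \to \infty$. The uniform entropy bound gives uniform local mass bounds, so Brakke compactness and the standard compactness theorem for integral currents yield subsequential limits $\mathcal{M}^\varepsilon$ and $\mathcal{K}^\varepsilon$. Since $\partial \Sigma^{\varepsilon,R}$ escapes to infinity, $\partial \mathcal{K}^\varepsilon_t = 0$ for all $t \geq 1$; the inequalities $\mathbb{M}[\mu^\varepsilon_t] \leq \mathbb{M}[\mu^\varepsilon_1]$ and $\mu_{\mathcal{K}^\varepsilon_t} \leq \mu^\varepsilon_t$ descend from the matching property of each approximation, and the initial identities follow from the local convergences $\Sigma^{\varepsilon,R} \to \Sigma^\varepsilon$ as $R \to \infty$. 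Thus $(\mathcal{K}^\varepsilon,\mathcal{M}^\varepsilon)$ is an enhanced motion with initial data $\Sigma^\varepsilon$, and \cref{limit-matching-motion} upgrades it to a matching motion. To verify agreement with the smooth flow on $[1,T^\varepsilon)$, fix any compact $K$ and $\tau < T^\varepsilon$: the Ecker--Huisken flow of \cref{existence-theorem} is smooth with uniformly bounded geometry on $K \times [1,\tau]$, so Brakke regularity together with local uniqueness of smooth MCF with bounded curvature force each $\mathcal{M}^{\varepsilon,R}$ to coincide with $\mathcal{H}^n \llcorner \Sigma^\varepsilon_t$ on $K \times [1,\tau]$ once $R$ is large enough that $\partial \Sigma^{\varepsilon,R}$ lies outside the parabolic domain of influence of $K$; this agreement then persists in the limit.

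The principal technical obstacle is the promotion from enhanced motion to matching motion: a priori, weak limits of integral currents can cancel against varifold limits of mass, as cautioned in the remark following \cref{limit-matching-motion}, and it is precisely the strict entropy bound $\lambda[\mathcal{C}] < 2$ that rules this out. A secondary concern is the transversality of $\partial B_R(0)$ with $\Sigma^\varepsilon$ for every large $R$ needed to apply \cref{brakkeflow-with-boundary}, which is handled by the quantitative asymptotic estimates of \cref{expander-curvature-estimates} and \cref{eigenfunction-decay-estimates}.
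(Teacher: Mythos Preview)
Your overall strategy---truncate $\Sigma^\varepsilon$ to balls, run White's Brakke flow with boundary, pass to a limit, and invoke \cref{limit-matching-motion}---is the same as the paper's. There is, however, a genuine gap in how you obtain the \emph{matching} motions that feed into \cref{limit-matching-motion}. You assert that the pair $(\mathcal{K}^{\varepsilon,R},\mathcal{M}^{\varepsilon,R})$ coming out of White's construction ``is itself a matching motion'' simply because it is unit-regular, cyclic, and has entropy below $2$. None of these properties, singly or jointly, yields the matching condition $\mu_t=\mu_{\mathcal{K}_t}$: entropy below $2$ forces the varifold to have unit multiplicity a.e., but the enhanced-motion inequality $\mu_{\mathcal{K}_t}\le\mu_t$ only gives multiplicity at most $1$ for $\mathcal{K}_t$ and does not by itself rule out the current vanishing on part of $\supp\mu_t$. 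The paper closes this gap by a different device: it restricts $\mathcal{M}^{\varepsilon,R}$ to $B_{R/2}(0)$ to obtain a genuine boundary-free Brakke flow, takes the associated \emph{level set flow} as the set-theoretic component, and uses the genericity of nonfattening to select radii $R_i\to\infty$ for which this level set flow does not fatten. Ilmanen's theorem (Section~12 of \cite{Ilmanen}) then guarantees that each such pair is a matching motion, after which one passes to the limit and applies \cref{limit-matching-motion}.

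A secondary difference concerns the agreement with the smooth flow on $[1,T^\varepsilon)$. Your appeal to ``Brakke regularity together with local uniqueness of smooth MCF with bounded curvature'' is plausible but underspecified. The paper instead argues via pseudolocality (as in \cref{interior-estimates}) that $\supp\mu^\varepsilon_t$ and $\Sigma^\varepsilon_t$ coincide outside a ball of radius $N'\sqrt{t}$ for a short time, then uses uniqueness of the \emph{level set flow} to propagate $\Gamma^\varepsilon_t=\Sigma^\varepsilon_t$ globally, and finally invokes the matching property once more to conclude $\supp\mu^\varepsilon_t=\Sigma^\varepsilon_t$ up to $T^\varepsilon$.
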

\begin{proof}
	Suppose $\varepsilon > 0$, as the argument for $\varepsilon < 0$ is identical. Let $\Sigma^{\varepsilon,R} = \Sigma^\varepsilon \cap \overbar{B_R(0)}$ be the hypersurface in $\overbar{B_R(0)}$ with boundary $\Sigma^{\varepsilon} \cap \partial B_R(0)$. By \cref{brakkeflow-with-boundary}, there exists an unit-regular and cyclic Brakke flow with boundary $\mathcal{M}^{\varepsilon,R} = \{\mu^{\varepsilon,R}_t\}_{t \in [0,\infty)}$ starting from $\Sigma^{\varepsilon,R}$. The flow $\mathcal{M}^{\varepsilon,R} \llcorner B_{R/2}(0)$ is therefore a (usual) Brakke flow inside $B_{R/2}(0)$. Since nonfattening is generic, we may choose a sequence $R_i \to \infty$ such that the associated level set flow of $\mathcal{M}^{\varepsilon,R_i} \llcorner B_{R_i/2}(0)$ is nonfattening. This produces a sequence of matching motions
	\begin{align*}
			(\mathcal{K}^{\varepsilon,R_i},\mathcal{M}^{\varepsilon,R_i} \llcorner B_{R_i/2}(0)).
	\end{align*}
	By compactness of Brakke flow we may now pass to a subsequence $R_i \to \infty$ to obtain a limiting enhanced motion $(\mathcal{K}^{\varepsilon},\mathcal{M}^\varepsilon)$ in $\mathbb{R}^{n+1}$ starting from $\Sigma^\varepsilon$. \par 
	By Lemma 3.5 of \cite{BWSmoothCompactness}, $\lambda[\Sigma] = \lambda[\mathcal{C}] < 2$, and by Lemma 6.2 of \cite{BWTopologicalUniqueness}, for every $\delta > 0$ there exists $\varepsilon_0$ such that $\abs{\lambda[\Sigma^\varepsilon] - \lambda[\Sigma]} < \delta$ for $\varepsilon < \varepsilon_0$. Choosing $\delta$ small enough so that $\lambda[\mathcal{C}] + \delta < 2$ and $\varepsilon_0$ small according to $\delta$ ensures that $\lambda[\mathcal{M}^\varepsilon] = \lambda[\Sigma^\varepsilon] < 2$, and so, in view of \cref{limit-matching-motion}, $(\mathcal{K}^\varepsilon,\mathcal{M}^\varepsilon)$ is matching. \par 
	Finally, using the argument in \cref{interior-estimates} with pseudolocality of MCF replaced by that of Brakke flow there exist $\delta > 0$ and $N' > 0$ such that $\supp \mu^\varepsilon_t \setminus B_{N'\sqrt{t}}(0) = \Sigma^\varepsilon_t \setminus B_{N'\sqrt{t}}(0)$ for $t \in [1,1+\delta]$. Since $(\mathcal{K}^\varepsilon,\mathcal{M}^\varepsilon)$ is matching, $\Gamma_t^\varepsilon \setminus B_{N'\sqrt{t}}(0) = \Sigma^\varepsilon_t \setminus B_{N'\sqrt{t}}(0)$ as well. It follows from uniqueness of level set flow that $\Gamma_t^\varepsilon$ agrees with $\Sigma_t^\varepsilon$. Using the matching property again we infer that $\supp \mu_t^\varepsilon = \Sigma_t^\varepsilon$. It is easy to see that these flow agree up to the first singular time of $\Sigma^\varepsilon_t$ (i.e. $T^\varepsilon$).
\end{proof}

Let $(\mathcal{K}^\varepsilon,\mathcal{M}^\varepsilon)$ be the matching motions constructed as above. We can once again take a limit as $\varepsilon \to 0^+$ to obtain a limiting enhanced motion $(\mathcal{K},\mathcal{M})$ where $\mathcal{K} = \{\Gamma_t\}_{t \in [1,\infty)}$ and $\mathcal{M} = \{\mu_t\}_{t \in [1,\infty)}$ such that $\Gamma_0 = \Sigma$ and $\mu_0 = \mathcal{H}^n \llcorner \Sigma$. However, this limit is not enough to prove \cref{nontrivial-flow-lines} as we will only recover the flow of the self-expander $\Sigma$ (as in the case of the smooth flow). Moreover, the limit does not attain the cone as its initial data. We need to translate the flow properly so that the starting time can be extended back to 0, and argue that we do not get the original flow of the self-expander back in the process. 
\begin{proof}[Proof of \cref{nontrivial-flow-lines}] Again WLOG suppose $\varepsilon > 0$. Let $(\mathcal{K}^\varepsilon,\mathcal{M}^\varepsilon)$ be the matching motions from \cref{prop32}. It is convenient to work with the following rescaled MCF: 
	\begin{align*}
		\tilde{\mu}^{\varepsilon}_s = \mu^{\varepsilon}_t \circ t^{-\frac{1}{2}} \text{ and } \tilde{\Gamma}^{\varepsilon}_s = \Gamma^\varepsilon_t, \;\; s = \log t.
	\end{align*}
	and let $(\tilde{\mathcal{K}}^\varepsilon, \tilde{\mathcal{M}}^\varepsilon)$ denote the rescaled flow. Under the rescaling, a smooth MCF will satisfy the rescaled MCF equation
	\begin{align*}
		\left(\frac{\partial x}{\partial s}\right)^\perp = H_{\Sigma_s} - \frac{x^\perp}{2},
	\end{align*}
	which has self-expanders as stable solutions. The flow $\tilde{\mathcal{M}}^\varepsilon$ is defined on the time interval $[0,\infty)$ with $\supp \tilde{\mu}^\varepsilon_0 = \Sigma^\varepsilon$. Since $\Sigma$ is unstable, the lowest eigenvalue $\lambda_1$ of $-L_\Sigma$ satisfies $\lambda_1 < \frac{1}{2}$, and consequently the $\mathcal{M}^{\varepsilon}$ "flows faster" than the parabolic rescaling $\sqrt{t}$. To be precise, for the the flow of the self-expander $\Sigma$ we have 
	\begin{align*}
		\lim_{\lambda \to 0^+} \dist(\Sigma, \lambda \Sigma_{\lambda^{-2}}) = \lim_{\lambda \to 0^+} \dist(\Sigma, \lambda \sqrt{\lambda^{-2}} \Sigma) = 0
	\end{align*}
	but since we are perturbing $\Sigma$ by its eigenfunction whose eigenvalue is below $\frac{1}{2}$, we must have 
	\begin{align*}
		\lim_{\lambda \to 0^+} \dist(\Sigma^\varepsilon, \lambda \supp \mu^\varepsilon_{\lambda^{-2}}) = \infty.
	\end{align*}
	In the rescaled setting, the above means that $\tilde{M}^\varepsilon$ moves out exponentially (the exact rate of which depends on $\lambda_1$). As such, we can find a sequence of time $s_\varepsilon$ with
	\begin{align*}
		d(\supp \tilde{\mu}^\varepsilon_{s_\varepsilon},x_0) = \gamma
	\end{align*}
	for some fixed point $x_0 \in \Sigma$ and positive constant $\gamma$. On the other hand $\varepsilon \to 0^+$, the rescaled flows $\tilde{\mathcal{M}}^\varepsilon$ converge to the static rescaled flow of $\Sigma$, so in fact $s_\varepsilon \to \infty$ as $\varepsilon \to 0^+$, i.e. one has to go further in time to reach a distance $\gamma$ away from $x_0 \in \Sigma$. This fact allows us to time translate $\tilde{\mathcal{M}}$ by $-s_\varepsilon$ to obtain a sequence of rescaled MCFs $\tilde{\mathcal{M}}^{\varepsilon,s_\varepsilon} = \{\tilde{\mu}^{\varepsilon,s_\varepsilon}_s\}_{s \in [-s_\varepsilon,\infty)}$ such that 
	\begin{align*}
		\supp \tilde{\mu}^{\varepsilon,s_\varepsilon}_{-s_\varepsilon} = \Sigma^\varepsilon \text{ and } d( \supp \tilde{\mu}^{s,s_\varepsilon}_0, x_0) = \gamma.
	\end{align*}
	By compactness of Brakke flows we can take $\varepsilon \to 0^+$ to obtain a limiting flow $\tilde{\mathcal{M}}$ defined on $(-\infty,\infty)$. It is easy to see that $\tilde{\mathcal{M}}$ is not the flow of $\Sigma$ as $d(\supp \tilde{\mu}_0, \Sigma) \ge \gamma$. Moreover, since $\Sigma$ is asymptotically conical and $\Sigma^\varepsilon \to \Sigma$ smoothly as $\varepsilon \to 0$, it follows that 
	\begin{align*}
		\lim_{s \to -\infty} \supp \tilde{\mu}_s = \lim_{\varepsilon \to 0} \supp \tilde{\mu}_{-s_\varepsilon}^{\varepsilon,s_\varepsilon} = \lim_{\varepsilon \to 0} \Sigma^\varepsilon = \Sigma.
	\end{align*}
	In view of the rescaling, this proves that the flow achieves $\mathcal{C}$ as the initial data. \par 
	Since $\tilde{\mu}_s$ is integral, it follows from strong maximum principle for varifolds \cite{SolomonWhite} that $\lim_{s \to -\infty} \tilde{\mu}_s = k \mathcal{H}^n \llcorner \Sigma$. Since $\lambda[\mathcal{C}] < 2$, $k = 1$, so the desired regularity follows from Brakke regularity theorem.
\end{proof}
\subsection{An example of singularity formation}
To conclude the section, we give a sufficient condition when there exists a flow coming out of $\mathcal{C}$ that has a singularity. The proof makes heavy use of the structure theory of self-expanders developed by Bernstein and Wang in a series of papers starting from \cite{BWSpace}. It would be interesting to see if a simpler proof exists. 
\begin{prop}
\label{disconnection}
	Suppose $\mathcal{C} \subset \mathbb{R}^{n+1}$, $2 \le n \le 6$, is a smooth double cone with $\lambda[\mathcal{C}] < 2$ and that the two connected components of $\mathcal{L}(\mathcal{C})$ are graphs over some (fixed) hyperplane. Suppose that there is a connected self-expander asymptotic to $\mathcal{C}$.	Then there exists an integral Brakke flow coming out of $\mathcal{C}$ that has a singularity in finite time.
\end{prop}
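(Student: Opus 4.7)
The strategy is to invoke \cref{nontrivial-flow-lines} on a carefully chosen unstable self-expander $\Sigma_u$, with the sign of the first-eigenfunction perturbation picked so that the resulting flow $\mathcal{M}$ descends to a \emph{disconnected} stable self-expander; smoothness for all time would then contradict the topological invariance of a smooth MCF. The key geometric ingredients are the existence of a disconnected stable self-expander $\Sigma_d$ asymptotic to $\mathcal{C}$ together with an unstable self-expander whose descending gradient trajectory reaches it.

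First, I would construct a disconnected stable self-expander $\Sigma_d = \Sigma_d^+ \sqcup \Sigma_d^-$. Because each component of $\mathcal{L}(\mathcal{C})$ is graphical over the prescribed hyperplane, one can solve the asymptotic Dirichlet problem for the self-expander equation separately on each side (or equivalently minimize the renormalized expander energy among graphs over the given hyperplane), producing a convex graphical self-expander $\Sigma_d^\pm$ asymptotic to the corresponding component of $\mathcal{C}$; the dimensional bound $2 \le n \le 6$ ensures smoothness of the minimizer, and convexity gives stability. The disjoint union $\Sigma_d$ is then a disconnected stable self-expander asymptotic to $\mathcal{C}$, topologically distinct from the given connected $\Sigma$.

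Next I would produce the unstable $\Sigma_u$. If the given $\Sigma$ is itself unstable, set $\Sigma_u := \Sigma$. Otherwise $\Sigma$ is stable and topologically distinct from $\Sigma_d$, and I would invoke the Bernstein--Wang min-max framework on the space of asymptotically conical self-expanders with entropy below $2$ (as in \cite{BWSpace, BWIntegerDegree, BWTopologicalUniqueness}, working with the renormalized expander energy so the variational problem is finite) to produce a smooth index-one critical point $\Sigma_u$ separating $\Sigma$ from $\Sigma_d$, with one branch of its one-dimensional unstable manifold limiting under the rescaled MCF to $\Sigma_d$. Applying \cref{nontrivial-flow-lines} to $\Sigma_u$ with the sign of the eigenperturbation chosen to select this branch yields an immortal integral Brakke flow $\mathcal{M}$ coming out of $\mathcal{C}$ that is smooth on some $(0,T)$.

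Finally, suppose for contradiction that $\mathcal{M}$ were smooth on all of $(0,\infty)$. Then the rescaled flow $\tilde{\mathcal{M}}_s$ would be a smooth eternal rescaled MCF; by monotonicity of the renormalized energy, the entropy bound $\lambda[\mathcal{C}]<2$, and Bernstein--Wang smooth compactness \cite{BWSmoothCompactness}, $\tilde{\mathcal{M}}_s$ would converge subsequentially as $s \to \infty$ to a stable self-expander $\Sigma_\infty$ asymptotic to $\mathcal{C}$ with strictly smaller renormalized energy than $\Sigma_u$, and by the choice of branch $\Sigma_\infty = \Sigma_d$. However, a smooth MCF preserves the number of connected components, so $\tilde{\mathcal{M}}_s$ remains connected for all $s$ (being a perturbation of the connected $\Sigma_u$), and any smooth subsequential limit is connected, contradicting the disconnectedness of $\Sigma_d$. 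Hence $\mathcal{M}$ must have a finite-time singularity. The main obstacle will be the min-max construction of $\Sigma_u$ and the identification of its unstable branch with the flow line to $\Sigma_d$: carrying this out rigorously requires a careful adaptation of the Bernstein--Wang Morse-theoretic framework under the entropy bound, and the restriction $n \le 6$ is used to ensure regularity of the saddle point; a secondary technical point is to certify that the matching-motion flow produced by \cref{nontrivial-flow-lines} agrees with the Morse flow line out of $\Sigma_u$, which ultimately relies on the uniqueness of low-entropy matching motions.
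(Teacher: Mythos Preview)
Your high-level strategy matches the paper's: locate an unstable self-expander, run the flow of \cref{nontrivial-flow-lines} with the inward sign, and argue that smoothness would force subsequential convergence of the rescaled flow to a disconnected stable expander, contradicting connectedness. However, two of your steps have genuine gaps that the paper handles by a different mechanism.

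First, the Bernstein--Wang mountain pass theorem you invoke (Corollary~1.2 of \cite{BWMountainPass}) requires the cone to be \emph{generic}, i.e.\ to admit no $C^2$-asymptotic self-expander with nontrivial Jacobi fields fixing infinity. This is not part of the hypotheses on $\mathcal{C}$, and you cannot simply assume it. The paper deals with this by taking a sequence of generic cones $\mathcal{C}_i \to \mathcal{C}$ in $C^{2,\alpha}$ (density of generic cones, \cite{BWSpace}), carrying out the entire argument for each $\mathcal{C}_i$, and only at the very end passing a time-translated compactness limit to obtain a flow out of $\mathcal{C}$ itself with a singularity. Your proposal skips this approximation entirely.

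Second, and more seriously, your identification ``by the choice of branch $\Sigma_\infty = \Sigma_d$'' is unjustified. Even granting an index-one $\Sigma_u$, the inward branch of the rescaled flow need only converge to \emph{some} stable self-expander with lower relative energy on that side; nothing in your argument rules out a connected stable expander sitting between $\Sigma_u$ and $\Sigma_d$. (The same objection applies in your case $\Sigma_u := \Sigma$ when the given $\Sigma$ is already unstable.) The paper avoids this by using the partial ordering of self-expanders asymptotic to a fixed generic cone (Theorem~4.1 of \cite{BWTopologicalUniqueness}) to select an \emph{innermost connected} self-expander $\Sigma_i$; by construction the only self-expander strictly inside $\Sigma_i$ is the disconnected $\Gamma_i$, so the limit of the inward-moving rescaled flow is forced to be $\Gamma_i$. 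The mountain pass is then used not to produce $\Sigma_u$ directly, but to show that this innermost $\Sigma_i$ must itself be unstable. Without an ``innermost'' selection of this kind, your contradiction does not close.

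A minor point: the dimension restriction $2 \le n \le 6$ is used for regularity in the mountain pass construction, not for the graphical disconnected expander, which is smooth in all dimensions by Ecker--Huisken.
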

\begin{rem}
	We note that the above is consistent with the topological uniqueness result of \cite{BWTopologicalUniqueness}. Indeed, by Proposition 5.6 of \cite{BWTopologicalUniqueness}, if $\lambda[\mathcal{C}] < \lambda[\mathbb{S}^{n-1} \times \mathbb{R}]$, the flow produced by \cref{nontrivial-flow-lines} is smooth for all time. On the other hand, any such double cone will not have a connected self-expander.
\end{rem}
\begin{proof}
	 Let $\sigma = \mathcal{L}(\mathcal{C})$, and let $W$ be the connected component of $\mathbb{S}^n$ lying between the two connected components of $\sigma$. By Corollary 1.2 of \cite{BWSpace}, the set of generic cones (in the sense that there is no $C^{2}$-asymptotically self-expander with nontrivial Jacobi fields that fix the infinity) whose link lie in $W$, is dense near $\mathcal{C}$.  These facts allow us to take a sequence of $C^{2,\alpha}$-hypersurfaces $\sigma_i$ in $\mathbb{S}^2$ such that 
	\begin{itemize}
		\item $\sigma_i \to \sigma$ in $C^{2,\alpha}(\mathbb{S}^{n})$ as $i \to \infty$;
		\item $\mathcal{C}_i$ is a generic, smooth double cone for all $i$, where $\mathcal{C}_i$ is the cone over $\sigma_i$;
		\item $\lambda[\mathcal{C}_i] < 2$ for sufficiently large $i$, by Lemma 6.2 of \cite{BWTopologicalUniqueness}.
	\end{itemize}
	From the above we immediately see that there exists a unique disconnected, stable self-expander $\Gamma_i$ $C^{2,\alpha}$-asymptotic to $\mathcal{C}_i$ (by evolution of entire graph \cite{EHEntireGraph}). We also see that $\mathcal{C}_i \subset \Omega$, where $\Omega$ is the connected component of $\mathbb{R}^{n+1} \setminus \mathcal{C}$ that contains $W$. Denote by $\Sigma_0$ the connected self-expander asymptotic to $\mathcal{C}$. Using a direct method with $\Sigma_0$ as the barrier, similar to Lemma 8.2 of \cite{BWIntegerDegree}, we can find a connected self-expander asymptotic to $\mathcal{C}_i$  in $\Omega'$, where $\Omega'$ is the connected component of $\mathbb{R}^{n+1} \setminus \Sigma_0$ such that the outward unit normal of $\mathcal{C}$ points into $\Omega'$. 
	\par 
	Since there exists a unique disconnected self-expander $\Gamma_i$ asymptotic to $\mathcal{C}_i$, by the partial ordering of self-expanders asymptotic to a fixed cone (Theorem 4.1 of \cite{BWTopologicalUniqueness}), we can pick an innermost connected self-expander $\Sigma_i$ $C^{2,\alpha}$-asymptotic to $\mathcal{C}_i$ (i.e. pick any $\Sigma_i$ such that the only self-expander lying on the inside of $\Sigma_i$ is the disconnected $\Gamma_i$ - note that $\Sigma_i$ might not be unique). We claim that $\Sigma_i$ is unstable. If not, the mountain pass theorem (Corollary 1.2 \cite{BWMountainPass}, this requires $2 \le n \le 6$) and the genericity of $\mathcal{C}_i$ imply the existence of an unstable self-expander $\Sigma'$ lying between $\Sigma_i$ and $\Gamma_i$. $\Sigma'$ must then be connected, but this contradicts the partial ordering. \par 
	Since $\Sigma_i$ is unstable and $\lambda[\mathcal{C}_i] < 2$, we can produce using \cref{nontrivial-flow-lines} an integral Brakke flow $\mathcal{M}^i = \{\mu^i_t\}_{t \in (0,\infty)}$ that moves inwards initially (by expander mean convexity) and satisfies $\lim_{t \to 0} \mu^i_t = \mathcal{H}^n \llcorner \mathcal{C}_i$. Suppose for a contradiction that $\mathcal{M}^i$ is smooth for all $t$, then the flow is expander mean convex (in the classical sense) and moves inwards for all time. Moreover, using an almost identical argument as in Proposition 5.1(3) of \cite{BWTopologicalUniqueness}, the rescaled flow (rescaling as in the proof of \cref{nontrivial-flow-lines}) $\tilde{\mathcal{M}}^i$ converges as $s \to \infty$ to a smooth, stable self-expander asymptotic to $\mathcal{C}_i$, which must lie inside $\Sigma_i$. Since $\Sigma_i$ is an innermost connected self-expander, the stable limit must be $\Gamma_i$ which is disconnected, a contradiction. \par 
	Now let $s^i$ denote the first singular time of the rescaled flows $\tilde{\mathcal{M}}^i$, and time translate $\tilde{\mathcal{M}}^{i}$ by $-s^i$ to obtain rescaled flows $\tilde{\mathcal{M}}^{i,s^i}$ with a singularity at time $0$. By compactness of MCF we obtain a rescaled flow $\tilde{\mathcal{M}}$ such that $\tilde{\mathcal{M}}^{i,s^i} \to \tilde{\mathcal{M}}$ subsequentially. Rescaling back we see that, by upper semicontinuity of the Gaussian density, $\mathcal{M}$ has its first singularity at time $t = 1$. Finally, we claim that $\mathcal{M}$ indeed comes out of the cone $\mathcal{C}$. As $\mathcal{M}$ is smooth on $(0,1)$, it is enough to show that 
	\begin{align}
	\label{link-convergence}
		\lim_{t \to 0} \supp {\mu}_t \cap \mathbb{S}^{n} = \sigma \text{ in } C^{2,\alpha}(\mathbb{S}^n).
	\end{align}
	Since each $\mathcal{M}^i$ attains $\mathcal{C}_i$ as initial data and $\mathcal{C}_i$ is $C^{2,\alpha}$-regular, we have
	\begin{align*}
		\lim_{t \to 0} \supp \mu_t^i \cap \mathbb{S}^{n} = \sigma_i \text{ in } C^{2,\alpha}(\mathbb{S}^{n}).
	\end{align*}
	As $\sigma_i \to \sigma$ in $C^{2,\alpha}(\mathbb{S}^n)$, by a diagonalization argument, we see that \cref{link-convergence} holds. This completes the proof.
\end{proof}
Assuming \cref{yao-conjecture}, the assumptions in \cref{disconnection} are satisfied by cones of the type \cref{good-cone}, given that the parameter $m$ is sufficiently small. In fact, numerical computations do confirm that the cones $x_1^2 = m^2(x_2^2 + x_3^2)$ for $m \le 1$ have entropy less than 2. Moreover, by \cite{AngenentIlmanenChopp}, there exists a connected self-expander for sufficiently small $m$. In these cases, combining the above with \cref{cylindrical-singularity}, we have the much stronger conclusion that any such cone has a potential evolution which disconnects at a cylindrical singularity.

\section{Self expanders with triple junctions}
\label{ode-appendix}
An important question in the study of self-expander is to determine the number of self-expanders coming out of a given cone. A classical result of Ecker--Huisken \cite{EHEntireGraph} shows that there exists a unique self-expander coming out of a graphical cone. In general, however, Angenent--Ilmanen--Chopp \cite{AngenentIlmanenChopp} showed numerically that uniqueness fails for double cones. It is proved rigorously by Helmensdorfer \cite{Helmensdorfer} that there are at least three distinct smooth self-expanders asymptotic to a rotationally symmetric double cone of the form \cref{good-cone} provided the cone angle is sufficiently large (when the cone angle is small, a barrier argument shows that uniqueness indeed holds --- see Lemma 8.1 of \cite{BWIntegerDegree}). \par 
In this section we prove a simple ODE result on the existence of two self-expanders with triple junctions for rotationally symmetric double cones with sufficiently large cone angle. The proof roughly follows the setup of Helmensdorfer \cite{Helmensdorfer}, although we do not need the clearing out lemma for MCF in the following analysis. This provides an example of a singular self-expander, and also illustrates that the cyclicity assumption in \cref{reflection-symmetry} is essential as tameness (\cref{tameness}) clearly fails for self-expanders with triple junction singularities. However, the examples constructed below are in fact still rotationally symmetric. \par 
We consider cones $\mathcal{C}_m$ of the form \cref{good-cone}, where $m$ is the parameter therein. Observe that $\mathcal{C}_m$ has a rotational symmetry across the $x_1$-axis as well as a reflection symmetry across the $\{x_1 = 0\}$ hyperplane. \par 
Assume the expander $\Sigma$ has a triple junction singularity at a point $(0,x_0)$. Imposing rotational symmetry on $\Sigma$ across the $x_1$-axis we may assume $x_0 = (a,0,\ldots,0)$. Note also at a triple junction singularity, the tangent cone is stationary and is therefore the union of three half-lines meeting at an angle of $2\pi/3$. These observations reduce the problem to finding a function $u: \mathbb{R}^+ \to \mathbb{R}$ satisfying the following ODE (written in spherical coordinates):
\begin{align}
	\label{expander-ode}
	\frac{u_{rr}}{1+ u_r^2} - \frac{n-1}{u} + \frac{1}{2}r u_r - \frac{1}{2}u = 0.
\end{align}
with initial data $u(0) = a$ and $u'(0) = \frac{\sqrt{3}}{3}$. The solution is asymptotic to $\mathcal{C}_m$ if 
\begin{align}
	\label{cone-condition}
	\lim_{r \to \infty} \frac{u(r)}{r} = \frac{1}{m}.
\end{align}
Of course, by the usual ODE existence and uniqueness theorem, the solution to the problem \cref{expander-ode} is unique with a given initial data $a$. First we show that any solution to \cref{expander-ode} is asymptotically conical. 
\begin{prop}
	\label{prop-c1}
	For any $a > 0$, there is a (unique) $m = m(a)$ such that \cref{cone-condition} holds.
\end{prop}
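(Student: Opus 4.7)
The plan is to reparametrize \cref{expander-ode} by arc length to obtain a smooth autonomous system, establish global existence via a phase-plane trapping argument, and then derive asymptotic conicality from a damping estimate on the support function $\phi(r) = u(r) - r u'(r)$.

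Introducing the tangent angle $\theta(s) = \arctan u'(r(s))$, the curve $(r(s), u(s))$ satisfies the autonomous system
\begin{align*}
\dot r = \cos\theta, \quad \dot u = \sin\theta, \quad \dot\theta = \frac{(n-1)\cos\theta}{u} + \frac{u\cos\theta - r\sin\theta}{2},
\end{align*}
which is smooth wherever $u > 0$, with initial data $(0, a, \pi/6)$. First I would show that $\theta$ is trapped in $(0, \pi/2)$: at $\theta = 0$ one has $\dot\theta = (n-1)/u + u/2 > 0$, and at $\theta = \pi/2$ (with $r > 0$) one has $\dot\theta = -r/2 < 0$. Since $\dot u = \sin\theta > 0$ gives $u(s) \geq a$, and $|\dot r|, |\dot u| \leq 1$ rules out finite-time blowup, the solution extends globally to $s \in [0, \infty)$. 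A standard phase-plane argument then forces $r, u \to \infty$ as $s \to \infty$, since otherwise $\theta$ would be pushed toward a trapping value while $\dot\theta$ would take the wrong sign.

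For the asymptotic behavior, the key identity is $(u/r)'(r) = -\phi(r)/r^2$. Differentiating $\phi$ and applying \cref{expander-ode} gives
\begin{align*}
\phi'(r) = -\frac{r(1+u'^2)}{2}\left(\phi(r) + \frac{2(n-1)}{u(r)}\right).
\end{align*}
Setting $\Phi := \phi + 2(n-1)/u$, this becomes a linear ODE
\begin{align*}
\Phi' = -\frac{r(1+u'^2)}{2}\,\Phi - \frac{2(n-1)\,u'}{u^2}
\end{align*}
with strongly damping coefficient and small forcing. An integrating-factor estimate yields $\Phi \to 0$ at infinity (quantitatively, $\Phi = O(r^{-3})$), so $\phi = -2(n-1)/u + o(1/u)$. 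This decay is fast enough that $(u/r)' = -\phi/r^2$ is integrable at infinity, and therefore $u(r)/r$ converges to some limit $1/m$.

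The main obstacle will be to show that $1/m$ is a positive finite real number, so that $m$ is a genuine finite cone angle. The quantitative bound $\dot\theta \sim -r/2$ when $\theta$ is near $\pi/2$ rules out $\theta \to \pi/2$, giving $1/m < \infty$. For $1/m > 0$ I would analyze the slow manifold $\phi = -2(n-1)/u$ explicitly: solving the first-order ODE $u' = u/r + 2(n-1)/(ru)$ produces the explicit family of hyperbolas $u^2 = r^2/m^2 - 2(n-1)$ parametrized by $m \in (0, \infty)$; stability of this manifold forces the IVP solution to eventually shadow one such hyperbola, and the strictly positive initial slope $u'(0) = \sqrt{3}/3$ together with a continuity argument selects a specific $m \in (0, \infty)$, giving the required $m = m(a)$ and completing the proof.
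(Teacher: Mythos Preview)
Your approach differs substantially from the paper's. There the key observation is that $\alpha(r) = \arctan(u/r)$ can only have strict local minima as critical points (at $\alpha' = 0$ one has $ru' = u$, so the ODE forces $u'' > 0$ and hence $\alpha'' > 0$), which makes $\alpha$ eventually monotone and therefore convergent. The endpoints are then excluded by comparison with level-set flows: if $\alpha \to 0$ the surface would be trapped in the level-set flow of the $x_1$-axis, which vanishes instantly; if $\alpha \to \pi/2$ it would be trapped in that of the hyperplane $\{x_1 = 0\}$, which is static and incompatible with $u'(0) = \sqrt{3}/3$. Your route via the damped linear equation for $\Phi$ is a legitimate, purely ODE-based alternative for the convergence step; what it buys is independence from the MCF machinery, at the cost of a longer computation.

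Two points need attention. The rate $\Phi = O(r^{-3})$ is circular: extracting that decay from the integrating factor requires the forcing $q = 2(n-1)u'/u^2$ to be $O(r^{-2})$, which already presumes $u$ grows linearly and $u'$ stays bounded. This is harmless, since mere boundedness of $\Phi$ (which does follow, as $\int_0^\infty q = 2(n-1)/a < \infty$) already makes $\phi/r^2$ integrable and hence gives convergence of $u/r$. The real gap is your final paragraph. The hyperbolas $u^2 = r^2/m^2 - 2(n-1)$ solve the reduced first-order relation $\phi = -2(n-1)/u$ but are \emph{not} solutions of \cref{expander-ode}, and ``stability forces shadowing of one hyperbola'' together with ``a continuity argument selects $m$'' is a heuristic, not a proof that $\lim u/r > 0$. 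A clean fix within your own framework: suppose $L := \lim u/r = 0$ and integrate $(u/r)' = 2(n-1)/(ur^2) - \Phi/r^2$ from $R$ to $\infty$ to obtain $\int_R^\infty \Phi(s)\,s^{-2}\,ds > u(R)/R$; bounding the left side by $\sup_{s \ge R}|\Phi|\cdot R^{-1}$ gives $\sup_{s \ge R}|\Phi| > u(R) \ge a$, contradicting $\Phi \to 0$.
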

\begin{proof}
	First we prove $u > 0$ for all $r > 0$. Suppose for a contradiction that there is $r_0$ such that $u(r_0) < 0$. Since initially $u$ and $u_r$ are both positive, by the mean value theorem there must be a local maximum $r_1 \in (0,r_0)$ with $u(r_1) > 0$, but at such an $r_1$ we can use \cref{expander-ode} to get
	\begin{align*}
		u_{rr}(r_1) = \frac{n-1}{u(r_1)} + \frac{1}{2}u(r_1) > 0,
	\end{align*}
	a contradiction. So $u$ is indeed positive. By the above calculation, this implies that all critical points of $u$ are local minima. \par 
	To continue, let $\alpha(r) = \arctan(u/r)$. Differentiating, we obtain
	\begin{align*}
		\alpha'(r) = \frac{ru_r - u}{u^2 + r^2} \text{ and } \alpha''(r) = \frac{ru_{rr}}{u^2 + r^2} - \frac{(ru_r - u)(2uu_r + 2r)}{(u^2+r^2)^2}.
	\end{align*}
	At a critical point $r_0$ of $\alpha(r)$, we have $r_0u_r(r_0) - u(r_0)= 0$ and \cref{expander-ode} implies that $u_{rr}(r_0) = \frac{n-1}{u(r_0)} > 0$. Hence $\alpha''(r_0) = \frac{ru_{rr}}{u^2 + r^2} > 0$. Therefore all critical points of $\alpha(r)$ are local minima as well. Since $\alpha(r)$ is bounded and only has local minima, monotone convergence theorem shows that $\lim_{r \to \infty} \alpha(r)$ exists. \par 
	\par
	It remains to show that the $\lim_{r \to \infty} \alpha(r) \in (0,\frac{\pi}{2})$. If the limit is 0, then $m = 0$ and the MCF of $\Sigma$ is contained in the level set flow of the $x_1$-axis, which disappears immediately. This is impossible. If the limit is $\frac{\pi}{2}$, the MCF of $\Sigma$ is contained in the level set flow of the hyperplane $\{x_1 = 0\}$, which is static. This is again impossible by, say, the initial condition $u_r(0) = \frac{\sqrt{3}}{3}$.
\end{proof}
Knowing the above, our problem becomes essentially a shooting problem: Given $m$, we wish to find the appropriate initial condition $a$ so that the solution to \cref{expander-ode} satisfies \cref{cone-condition}. Let $u^a(r)$ be the solution to \cref{expander-ode} with initial condition $u^a(0) = a$. Consider the asymptotic cone angle parameter $m$ as a function of $a$: 
\begin{align*}
	m(a) = \lim_{r \to \infty} \frac{u^a(r)}{r} = \lim_{r \to \infty} u_r^a(r) \in (0,\infty).
\end{align*}
\begin{prop}
	\label{continuity}
	$m(a)$ is a continuous function on $(0,\infty)$.
\end{prop}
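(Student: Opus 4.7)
\medskip
\noindent\textbf{Proof proposal.} The plan is a standard shooting-method continuity argument. Given $a_0>0$ and any sequence $a_i\to a_0$, the goal is to prove $m(a_i)\to m(a_0)$, which I will achieve through an $\varepsilon/3$ argument built from (i) classical $C^2_{\mathrm{loc}}$ continuous dependence of ODE solutions on their initial data, plus (ii) a uniform tail estimate controlling how fast $u^a_r(r)$ approaches $m(a)$.

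\medskip
\noindent For (i), the right hand side of \cref{expander-ode}, namely
\begin{align*}
u_{rr}=(1+u_r^2)\!\left(\tfrac{n-1}{u}-\tfrac{1}{2}(ru_r-u)\right),
\end{align*}
is smooth in the region $\{u>0\}$, and \cref{prop-c1} already guarantees $u^a>0$ for all $r\ge 0$ and for each fixed $a>0$. Standard Picard--Lindel\"of theory with parameters then yields $u^{a_i}\to u^{a_0}$ in $C^2_{\mathrm{loc}}([0,\infty))$, and in particular $u^{a_i}_r(R)\to u^{a_0}_r(R)$ for every fixed $R>0$.

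\medskip
\noindent For (ii), I would introduce $\phi^a(r)=ru^a_r-u^a$, whose derivative satisfies
\begin{align*}
(\phi^a)'=ru^a_{rr}=r(1+(u^a_r)^2)\!\left(\tfrac{n-1}{u^a}-\tfrac{\phi^a}{2}\right),
\end{align*}
so that $\phi^a$ has an approximate equilibrium $\phi^a_{\mathrm{eq}}(r)=2(n-1)m(a)/r$ obtained by substituting $u^a\sim r/m(a)$. Linearizing at $\phi^a_{\mathrm{eq}}$ yields a strongly contracting equation with exponent of order $-(1+m(a)^2)r/2$, which by a Gronwall/barrier argument forces $\phi^a(r)=\phi^a_{\mathrm{eq}}(r)+O(r^{-3})$ once $r$ is past some $R_0$ for which the initial-slice data $(u^a(R_0),u^a_r(R_0))$ lies in a small (locally uniform) neighborhood of its asymptotic value. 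Since $(u^a(R_0),u^a_r(R_0))$ depends continuously on $a$ by (i), and since the bounds from \cref{prop-c1} provide a locally uniform a priori control on $u^a/r$ and $u^a_r$, one can choose $R_0$ and the constant $C$ in
\begin{align*}
|u^a_r(r)-m(a)|\le C r^{-2},\qquad r\ge R_0,
\end{align*}
uniformly for $a$ in a compact neighborhood $K\subset(0,\infty)$ of $a_0$. Combining this with (i) in the triangle inequality
\begin{align*}
|m(a_i)-m(a_0)|\le |m(a_i)-u^{a_i}_r(R)|+|u^{a_i}_r(R)-u^{a_0}_r(R)|+|u^{a_0}_r(R)-m(a_0)|
\end{align*}
and choosing $R$ large and then $i$ large completes the proof.

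\medskip
\noindent\textbf{Main obstacle.} The delicate step is the uniform tail estimate: one must show that $m(a)$ is actually well-approximated by $u^a_r(R)$ with an error controlled independently of $a\in K$, rather than just pointwise in $a$. This requires either a uniform Gronwall estimate for the linearized $\phi^a$-equation near its time-dependent equilibrium $2(n-1)m(a)/r$, or explicit sub/super-solutions bracketing $u^a$ between two asymptotically conical barriers whose cone angles depend continuously on $a$. The bound on $(1+(u^a_r)^2)$ from \cref{prop-c1} together with the locally uniform positivity of $u^a$ on $\{r\ge R_0\}$ is what makes this linearization tractable.
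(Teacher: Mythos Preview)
Your overall strategy---uniform tail estimate plus continuous dependence plus triangle inequality---is exactly the paper's. The difference lies in how the tail estimate is obtained. The paper works with $u/r$ rather than $u_r$, and in place of your Gronwall/linearization argument it makes the elementary observation that $u_{rr}^{a'}$ can have at most one zero (by differentiating \cref{expander-ode} and checking that $u_{rrr}<0$ at any such zero). This structural fact, together with the trivial lower bound $u^{a'}\ge a-1/N$, gives $(u^{a'}/r)_r \le c/r^2$ uniformly for $|a'-a|<1/N$, which integrates directly to the required tail bound without any linearization or barrier.

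Your route is viable in principle, but two points need attention. First, a minor slip: the equilibrium is $2(n-1)/(m(a)r)$, not $2(n-1)m(a)/r$. Second, and more substantively, you invoke \cref{prop-c1} for ``locally uniform a priori control on $u^a/r$ and $u^a_r$,'' but that proposition is pointwise in $a$ and supplies no such uniform bound; moreover, your proposed equilibrium and Gronwall constants depend on $m(a)$ itself, which is circular since continuity of $m$ is what you are proving. This is repairable---the contraction coefficient satisfies $r(1+u_r^2)/2 \ge r/2$ regardless of $u_r$, and only the elementary bound $u^{a'}\ge a-1/N$ is needed to control the forcing term---but the paper's sign analysis of $u_{rr}$ is shorter and avoids the issue entirely.
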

\begin{proof}
	First we record that
	\begin{align*}
		u_{rr}(0) = \frac{4}{3} \left(\frac{n-1}{a} + \frac{1}{2}a\right) > 0.
	\end{align*}
	From the proof of \cref{prop-c1}, we see that a critical point of $u$ must be a local minimum, but since $u$ is initially increasing and smooth, there cannot be any critical point at all. So $u$ is strictly increasing and we deduce that $u_r > 0$ for all $r > 0$. On the other hand, l'H\^{o}pital's rule on \cref{cone-condition} yields $\lim_{r \to \infty} u_r = m$. Going back to \cref{expander-ode} and taking the limit as $r \to \infty$ yield also $\lim_{r \to \infty} u_{rr} = 0$. \par 
	Fix an $a \in (0,\infty)$. Clearly $u^a \ge a$ and $u_r^{a}$ is bounded above. Therefore we may fix a constant $N$ with $\frac{1}{N} < a$ such that $\abs{a' - a} < \frac{1}{N}$ implies that 
	\begin{align}
		\label{uniform-bound}
		\frac{1}{u^{a'}} \le c 
	\end{align}
	for some constant $c$ depending on $a$ and $N$. We compute 
	\begin{align}
		\label{estimate-2}
		\left(\frac{u}{r}\right)_r = \frac{1}{r}\left(u_r - \frac{u}{r}\right) = \frac{2}{r^2}\left(\frac{n-1}{u} - \frac{u_{rr}}{1 + (u_r)^2}\right).
	\end{align}
	For $\abs{a - a'} < \frac{1}{N}$ there are two cases. If $u_{rr}^{a'}$ is never zero, then it is always positive. This immediately gives the bound:
	\begin{align}
		\label{upper-bound}
		\left(\frac{u^{a'}}{r}\right)_r \le \frac{c}{r^2}.
	\end{align}
	If $u_{rr}^{a'}(r_0) = 0$ at some point $r_0$, differentiating \cref{expander-ode} once we get 
	\begin{align}
		\label{urrr}
		\frac{1}{1+u_r^2}\left(u_{rrr} - \frac{2u_r(u_{rr})^2}{1+u_r^2}\right) + \frac{n-1}{u^2} u_r + \frac{1}{2}ru_{rr} = 0.
	\end{align}
	From this we immediately see that when $u_{rr}^{a'}(r_0) = 0$, 
	\begin{align*}
		u_{rrr}^{a}(r_0) = -(1+u_{r}^2)\frac{n-1}{u^2} < 0,
	\end{align*}
	so every critical point of $u_r^{a'}$ is a local maximum, for which there can be at most one of them. Therefore $r_0$ is the only zero of $u_{rr}^{a'}$. In this case, we see from \cref{urrr} that, at any negative local minimum of $u_{rr}^{a'}$, we have
	\begin{align*}
		\frac{2u_r^{a'}}{(1 + (u_r^{a'})^2)^2} (u_{rr}^{a'})^2 \le  \frac{(n-1)u_r^{a'}}{(u^{a'})^2} \implies \frac{ (u_{rr}^{a'})^2}{(1 + (u_r^{a'})^2)^2} \le \frac{n-1}{2(u^{a'})^2} \le c,
	\end{align*}
	where we used \cref{uniform-bound}. Going back to \cref{estimate-2}, this yields the same type of uniform upper bound as \cref{upper-bound} (up to increase $c$). Therefore we conclude \cref{upper-bound} holds for all $\abs{a - a'} < \frac{1}{N}$. \par 
	Integrating \cref{upper-bound} from $r$ to $\infty$, we obtain the estimate: 
	\begin{align*}
		m(a') - \frac{u^{a'}(r)}{r} < \frac{c}{r}, \; \; \abs{a' - a} < \frac{1}{N}.
	\end{align*}
	Now given $\varepsilon > 0$ we can pick $r_0 > 0$ such that $c/r_0 < \varepsilon/3$ and $\delta$ so small that $\abs{a' - a} < \delta$ implies that $\abs{u^{a'} - u^a} < \varepsilon/3$ on $(0,r_0]$ (this follows from continuous dependence on initial data as we are now in a compact set). Using the triangle inequality we get that
	\begin{align*}
		\abs{m(a) - m(a')} \le \abs{m(a) - \frac{u^a(r_0)}{r_0}} + \abs{\frac{u^a(r_0)}{r_0} - \frac{u^{a'}(r_0)}{r_0}} + \abs{m(a') - \frac{u^{a'}(r_0)}{r_0}} < \varepsilon.
	\end{align*}
	This finishes the proof of continuity.
\end{proof}
We are now in the position to prove the existence theorem.
\begin{thm}
\label{existence-singular-expander}
	There is an $M_0 > 0$ such that for all $M > M_0$, there exists at least two distinct values $a_1, a_2 \in (0,\infty)$ depending on $M$ such that $m(a_1) = m(a_2) = M$.
\end{thm}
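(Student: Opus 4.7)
The plan is to show that $m(a) \to \infty$ as both $a \to 0^+$ and $a \to \infty$. Combined with the continuity of $m$ from \cref{continuity}, this forces $m$ to attain a positive minimum $M_0 = m(a^*)$ at some interior point $a^* \in (0,\infty)$. For any $M > M_0$, the intermediate value theorem applied on $(0,a^*)$ and $(a^*,\infty)$ then produces two distinct values $a_1 < a^* < a_2$ with $m(a_1) = m(a_2) = M$.

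To analyze both limits, I would recast \cref{expander-ode} in the phase-plane variables $\theta(r) = \arctan(u_r(r))$ and $v(r) = u(r) - r u_r(r)$. Direct differentiation yields
\begin{align*}
\theta_r = \frac{v}{2} + \frac{n-1}{u}, \qquad v_r = -r(1+u_r^2)\theta_r,
\end{align*}
and the asymptotic cone condition \cref{cone-condition} established in \cref{prop-c1} translates to $v(r) \to 0$ and $\tan\theta(r) \to m(a)$ as $r \to \infty$. The initial data become $\theta(0) = \pi/6$, $v(0) = a$, $r(0) = 0$.

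For $a \to \infty$, since $u \ge a$ throughout (as $u^a$ is monotone increasing), the term $(n-1)/u$ is $O(1/a)$ and negligible compared to $v/2 \sim a$. Parameterizing by $\theta$, the leading-order system is $r_\theta = 2/v$, $v_\theta = -r\sec^2\theta$. For $a$ large, $v$ persists near $a$ while $r(\theta) \approx 2(\theta - \pi/6)/a$, so
\begin{align*}
v(\theta) \approx a - \frac{2}{a}\int_{\pi/6}^{\theta}(\phi - \pi/6)\sec^2\phi\, d\phi.
\end{align*}
Since $\sec^2\phi$ is nonintegrable at $\phi = \pi/2$, the first root $\theta_\infty$ of $v$ must satisfy $\theta_\infty \to \pi/2$ as $a \to \infty$, so that $m(a) = \tan\theta_\infty \to \infty$. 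For $a \to 0^+$, rescale $v_a(s) = u^a(as)/a$, so that $v_a$ satisfies
\begin{align*}
\frac{(v_a)_{ss}}{1 + (v_a)_s^2} - \frac{n-1}{v_a} + \frac{a^2}{2}\bigl(s(v_a)_s - v_a\bigr) = 0,
\end{align*}
with $v_a(0) = 1$, $(v_a)_s(0) = 1/\sqrt{3}$. As $a \to 0^+$, $v_a$ converges on compact subsets to the generalized catenoid $v_0$ solving $v_{ss}/(1+v_s^2) = (n-1)/v$, whose first integral $1 + v_s^2 = \tfrac{4}{3}v^{2(n-1)}$ forces $v_s$ to grow like $v^{n-1}$. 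The catenoid-like regime remains valid until $u \sim \sqrt{2(n-1)}$, where the expander terms become comparable to $(n-1)/u$; at that exit point $(r_1, u_1)$ one computes $u_1 \sim \sqrt{2(n-1)}$, $r_1 \to 0$, and $u_r(r_1) \to \infty$, making $v(r_1)$ very negative. Running the same phase-plane analysis on the branch where $\theta$ decreases from near $\pi/2$ again forces $\theta_\infty \to \pi/2$, and hence $m(a) \to \infty$.

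The main obstacle is to make the heuristic leading-order calculations rigorous: controlling the error from dropping $(n-1)/u$ in the large-$a$ regime, from the approximation $r(\theta) \approx 2(\theta - \pi/6)/a$, and from matching the catenoid and expander regimes for small $a$ (where one must verify that the quasi-equilibrium asymptote picked up beyond the catenoid exit genuinely inherits the large slope from the very negative $v(r_1)$). Standard continuous-dependence results for ODEs and Gronwall-type estimates, together with the monotonicity of $\alpha(r) = \arctan(u(r)/r)$ established in the proof of \cref{prop-c1} (whose critical points must be local minima), should suffice to close the argument.
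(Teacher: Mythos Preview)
Your overall strategy coincides with the paper's: show $m(a)\to\infty$ as $a\to 0^+$ and as $a\to\infty$, then invoke continuity. The tactics, however, are quite different. The paper proceeds by elementary ODE arguments built around the structural fact (proved in \cref{continuity}) that $u_{rr}^a$ has at most one zero. For $a\to\infty$ it first shows $\sup_r u_r^a$ cannot stay bounded, then splits into the cases ``$u_{rr}^a$ has a zero $r_a$'' versus ``$u_{rr}^a>0$ everywhere''; in the first case $r_a$ is the global maximum of $u_r^a$, and plugging $u_{rr}=0$ into \cref{expander-ode} together with the monotonicity estimate $(u/r)'>0$ for $r>r_a$ forces $m(a)\ge u(r_a)/r_a\to\infty$. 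For $a\to 0^+$ the paper again shows $\sup_r u_r^a$ is unbounded (by a direct lower bound on $u_{rr}$ near $r=0$ coming from the singular term $(n-1)/u$), then runs a similar, if more delicate, case analysis. No phase-plane variables, no asymptotic matching, and no appeal to catenoid-type limits are needed.

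Your route is plausible but, as you note, heuristic; let me flag two places where the sketch may be harder to close than you expect. First, in the large-$a$ analysis you parametrize by $\theta$ and speak of the ``first root $\theta_\infty$ of $v$''. In fact $\theta$ need not be globally monotone (since $u_{rr}$ may change sign once), and when $u_{rr}>0$ throughout one has $v_r=-r u_{rr}<0$ with $v\to 0^+$ at infinity, so $v$ has no root at all---$\theta_\infty$ is a limiting value, not a zero. Your bootstrap $r(\theta)\approx 2(\theta-\pi/6)/a$ also relies on $v\approx a$, which fails precisely in the regime where $v$ is small, so extracting $\theta_\infty\to\pi/2$ rigorously requires an additional argument. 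Second, for $a\to 0^+$ the catenoid comparison is correct on compacts, but your matching claim (``the catenoid regime remains valid until $u\sim\sqrt{2(n-1)}$, then the expander branch inherits the large slope'') is exactly the hard step: one must control the solution through a transition layer where neither the catenoid nor the expander approximation dominates, and then argue that the subsequent evolution does not relax the slope. The paper sidesteps all of this by the direct $u_{rr}$-based dichotomy, which is both shorter and avoids any asymptotic-matching machinery.
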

\begin{proof}
	We will show that $m(a) \to \infty$ both as $a \to 0$ and as $a \to \infty$. In view of \cref{continuity} this will prove the theorem.  \par 
	Let us show that $m(a) \to \infty$ as $a \to \infty$. First of all, we show that $u_r^a$ cannot be uniformly bounded above. Indeed if $u_r^a$ is uniformly bounded above by some constant $C$, then $u - ru_r \ge a - C$ for $r \in [0,1]$ and so $u_{rr} \ge \frac{1}{2}(a-C)$ on $[0,1]$ by \cref{expander-ode}. But then 
	\begin{align*}
		u^a_r(1) = \frac{\sqrt{3}}{3} + \int_0^1 u^a_{rr}(r)dr \ge \frac{\sqrt{3}}{3} + \frac{1}{2}(a - C) \to \infty 
	\end{align*}
	as $a \to \infty$, a contradiction. \par 
	Recall from the proof of \cref{continuity} that $u_{rr}^{a}$ can have at most one zero. If there is a sequence $a_i \to \infty$ such that $u_{rr}^{a_i}$ has one zero $r_i$, then \cref{estimate-2} immediately implies that 
	\begin{align*}
		\left(\frac{u^{a_i}}{r}\right)_r > 0, r > r_i.
	\end{align*}
	Integrating the above from $r_i$ to $\infty$ we get 
	\begin{align}
		\label{estimate-1}
		m(a_i) > \frac{u^{a_i}(r_i)}{r_i}.
	\end{align}
	On the other hand, at a zero of $u_{rr}$, \cref{expander-ode} gives that
	\begin{align}
		\label{expander-ode-2}
		\frac{u^{a_i}(r_i)}{r_i} = u^{a_i}_r(r_i)- \frac{2(n-1)}{u^{a_i}(r_i)r_i} 
	\end{align}
	Observe that $u_r^{a_i}(r_i) = \sup_{r} u_r^{a_i}(r)$ because $r_i$ is a local maximum of $u^{a_i}_r$ and $u^{a_i}_{rr}(r) < 0$ for all $r > r_i$. Since $u_r^{a_i}$ is not uniformly bounded, we have that \begin{align*}
		\lim_{i \to \infty} \frac{u^{a_i}(r_i)}{r_i} = \lim_{i \to \infty} u^{a_i}_r(r_i)- \frac{2(n-1)}{u^{a_i}(r_i)r_i} = \infty, 
	\end{align*}
	where we also used $u^{a_i}(r_i) > a_i$. Recalling \cref{estimate-1}, we see that $\lim_{i \to \infty} m(a_i) = \infty$. \par
	Otherwise there is $a_0 > 0$ such that $u_{rr}^a$ has no zero for all $a > a_0$. This means that $u_r^a$ is strictly increasing for all $a > a_0$. Since $u_r^a$ is not uniformly bounded we can find a sequence $a_i \to \infty$ and $\{r_i\} \subset [0,\infty)$ such that $u_r^{a_i}(r_i) > i$. Of course monotonicity implies that $m(a_i) \ge u_r^{a_i}(r_i) > i$. This shows that $m(a) \to \infty$ as $a \to \infty$. \par 
	Next we will show that $m(a) \to \infty$ as $a \to 0$ as well. Again we will first argue that $u_r^a$ cannot be uniformly bounded near $0$. Suppose for a contradiction that there is $a_0 > 0$ such that $\abs{u_r^a} \le C$ for all $0 < a < a_0$, then since $u_r^a > 0$ we get 
	\begin{align}
		\label{upper-bound-2}
		u^a(r) = a + \int_0^r u_r^a(t) dt \le a + Cr, \;\; a < a_0.
	\end{align}
	On the other hand, \cref{expander-ode} gives that 
	\begin{align*}
		u_{rr}^a \ge \frac{n-1}{u^a} + \frac{1}{2}(u^a - r u_r^a) \ge \frac{n-1}{2u^a} + \sqrt{n-1} - Cr, \;\; a < a_0.
	\end{align*}
	In particular for sufficiently small $a$ we can ensure $\sqrt{n-1} \ge Cr$ and so that $u_{rr}^a(r) \ge \frac{n-1}{2u^a(r)}$ for $r < \sqrt{a}$. Hence, using \cref{upper-bound-2}, we may estimate
	\begin{align*}
		u_{r}^a(\sqrt{a}) &\ge \frac{\sqrt{3}}{3} + \int_0^{\sqrt{a}} \frac{n-1}{2u^a(r)} dr \\
		&\ge \frac{\sqrt{3}}{3} + \frac{n-1}{2}\int_0^{\sqrt{a}} \frac{1}{a + Cr} dr = \frac{\sqrt{3}}{3} + \frac{n-1}{2C} \log(1 + Ca^{-\frac{1}{2}}) \to \infty
	\end{align*}
	as $a \to 0$, a contradiction. This shows that $u_r^a$ is not uniformly bounded near $0$. \par 
	Suppose there is a sequence $a_i \to 0$ such that $u_{rr}^{a_i}$ has one zero $r_i$. In view of \cref{expander-ode-2}, if $r_i$ is bounded away from 0, then taking $i \to \infty$ will give  
	\begin{align*}
		\lim_{i \to \infty} \frac{u^{a_i}(r_i)}{r_i} = \lim_{i \to \infty} u^{a_i}_r(r_i)- \frac{2(n-1)}{u^{a_i}(r_i)r_i} = \infty,
	\end{align*}
	where we used the fact that $u^{a_i}(r_i) \ge a_i + \frac{\sqrt{3}}{3}r_i$ and that $u^{a}_r$ is not uniformly bounded near 0. By \cref{estimate-1}, we can conclude $m(a_i) \to \infty$ as $i \to \infty$ as before. So we henceforth assume that $r_i \to 0$ and assume for a contradiction that there is a constant $C$ such that $r^{-1}u^{a_i}(r) \le C$ uniformly for $r \ge r_i$. Since $u_r$ is decreasing on $[r_i,\infty)$ we get
	\begin{align*}
		u^{a_i}(2r_i) \ge \int_{r_i}^{2r_i} u^{a_i}_r(t) dt \ge r_i u^{a_i}_r(2r_i) \implies u_r^{a_i}(2r_i) \le \frac{u^{a_i}(2r_i)}{r_i} \le 2C.
	\end{align*}
	Since $u^{a_i}_{rr}(2r_i) < 0$, \cref{expander-ode} gives that
	\begin{align*}
		0 < \frac{u^{a_i}(2r_i)}{2r_i} \le \frac{-2(n-1)}{2r_iu^{a_i}(2r_i)} + u^{a_i}_r(2r_i).
	\end{align*}
	Rearranging, we deduce that $r_iu^{a_i}(2r_i) \ge (n-1)u_r^{a_i}(2r_i)^{-1} \ge \frac{1}{2}(n-1)C^{-1}$, and so 
	\begin{align*}
		2Cr_i^2 \ge r_iu^{a_i}(2r_i) \ge \frac{1}{2}(n-1)C^{-1} \implies r_i^2 \ge \frac{1}{4}(n-1)C^{-2}
	\end{align*}
	a contradiction as $r_i \to 0$. Hence $r^{-1}u^{a_i}(r)$ is not uniformly bounded on $(r_i,\infty)$, so for each $i$ we may find $r'_{i} \in (r_i,\infty)$ such that $(r_i')^{-1}u^{a_i}(r'_i) > i$. \cref{estimate-1} then implies $m(a_i) \to \infty$ as $i \to \infty$. \par 
	Otherwise $u_{rr}^{a}$ has no zero for sufficiently small $a$ and monotonicity as before implies that $m(a) \to \infty$ as $a \to 0$. This completes the proof.
\end{proof}

\section{Further remarks}
\label{further-remarks}
We conclude our article with some open questions and conjectures, some of which we have already alluded to before.  \par 
The most natural question to ask is what happens if $\mathcal{C}$ is rotationally symmetric and the link $\mathcal{L}(\mathcal{C})$ has three (or more) connected components. It is not expected that self-expanders asymptotic to $\mathcal{C}$ will be rotationally symmetric. This should be compared to the case of minimal surfaces - the Costa surface, which is not rotationally symmetric, has two catenoidal ends and one planar end. It is therefore natural to expect that something similar happens if the cone is given by the union of a rotationally symmetric double cone and a hyperplane. We suspect that gluing method by desingularizing the connected expander (asymptotic to the cone) with the hyperplane will produce a counterexample. \par 
Another important problem is to determine the entropy of rotationally symmetric double cones. In fact, we conjecture that the assumption $\lambda[\mathcal{C}] < 2$ in \cref{main-theorem} is redundant (where as the same assumption in \cref{reflection-symmetry} is essential). More precisely we conjecture:
\begin{conj}
\label{yao-conjecture}
	Let $\mathcal{C}$ be a rotationally symmetric double cone of the form:
	\begin{align*}
		x_1^2 =  \begin{cases} m_1(x_2^2 + x_3^2 + \cdots + x_n^2), & x_1 \ge 0 \\  m_2(x_2^2 + x_3^2 + \cdots + x_n^2), & x_1 < 0\end{cases},
	\end{align*}
	where $m_1, m_2 > 0$. Then $\lambda[\mathcal{C}] < 2$. 
\end{conj}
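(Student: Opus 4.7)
The plan is to reduce via scale invariance to controlling $\sup_{x_0} F[\mathcal{C} + x_0]$, decompose along the two-cone structure, and combine a single-cone bound with a competition argument between the two halves. Since $\rho\mathcal{C} = \mathcal{C}$ for all $\rho > 0$, scale invariance yields $\lambda[\mathcal{C}] = \sup_{x_0} F[\mathcal{C} + x_0]$, so it suffices to show this supremum is strictly less than $2$. Writing $\mathcal{C} = \mathcal{C}_+ \cup \mathcal{C}_-$ for the two one-sided round cones with apices at the origin, common axis $e_1$, and cone half-angles $\theta_\pm \in (0, \pi/2)$ (determined by $m_1, m_2$), we have $F[\mathcal{C} + x_0] = F[\mathcal{C}_+ + x_0] + F[\mathcal{C}_- + x_0]$, so it suffices to control each summand.

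The first step is the uniform bound $F[\mathcal{C}_\pm + x_0] < 1$ for all $x_0$, with supremum equal to $1$ approached (but not attained) by translations pushing the apex of $\mathcal{C}_\pm$ to infinity along a ray of the cone, so that the origin lies on the translated cone near a smooth point far from the apex (where the cone is locally close to its tangent hyperplane, whose Gaussian area is one). Proving this strict inequality is the technical heart of the argument: using the rotational symmetry of $\mathcal{C}_\pm$ we may reduce $x_0$ to the span of $e_1$ and one additional direction, and an application of Jensen's inequality to the remaining angular integral reduces the problem to a two-variable Gaussian integral inequality amenable to critical-point analysis plus asymptotics as $|x_0| \to \infty$.

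Given the single-cone bound, the strict inequality $F[\mathcal{C} + x_0] < 2$ follows from a competition argument: for $F[\mathcal{C}_+ + x_0]$ to exceed $1 - \delta$, the asymptotic analysis forces $x_0$ to lie in a neighborhood of the axial ray of $\mathcal{C}_+$ at large distance, and on this region the opposite cone $\mathcal{C}_-$ is pushed far from the origin, yielding $F[\mathcal{C}_- + x_0] = O(e^{-c|x_0|^2})$. A symmetric statement handles the reverse regime, and on the complementary region where neither summand is near-saturated the bound is automatic. The principal obstacle is the degenerate limit $\theta_\pm \to \pi/2$: here each $\mathcal{C}_\pm$ approaches a full hyperplane and the single-cone bound saturates, so quantitative control becomes delicate. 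Handling this limit cleanly likely requires a compactness argument in which any counterexample sequence produces a blow-up limit contradicting the smoothness of $\mathcal{C}$.
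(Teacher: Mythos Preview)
This statement is labeled \emph{Conjecture} in the paper, and the paper does not prove it; the authors explicitly write that ``the analysis of the Gaussian area functional on the cone centered away from the origin is complicated to handle'' and offer only numerical evidence. So there is no proof in the paper to compare against, and what you have written should be read as a proposed attack on an open problem.

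Your attack has a genuine gap: the single-cone bound $\sup_{x_0} F[\mathcal{C}_\pm + x_0] \le 1$ is false in general. For a half-cone $\mathcal{C}_+$ in $\mathbb{R}^{n+1}$ with half-angle $\theta$ from the axis, parametrize by arclength $t$ from the apex and integrate directly: for $x_0 = h e_1$ on the axis one finds
\[
F_{he_1,1}[\mathcal{C}_+] \;=\; \frac{\sigma_{n-1}}{(4\pi)^{n/2}}\,\sin^{n-1}\theta \; e^{-h^2\sin^2\theta/4}\int_0^\infty t^{n-1} e^{-(t-h\cos\theta)^2/4}\,dt.
\]
For $n=2$ and $h$ large this is asymptotic to $\sqrt{\pi}\, h\sin\theta\cos\theta \, e^{-h^2\sin^2\theta/4}$, which is maximized at $h\sin\theta=\sqrt{2}$ with value $\sqrt{2\pi/e}\,\cos\theta$. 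For small $\theta$ this is close to $\sqrt{2\pi/e}\approx 1.52>1$. Geometrically this is exactly the cylinder regime the paper flags: at height $h=\sqrt{2}/\sin\theta$ the cross-sectional radius is $h\tan\theta\approx\sqrt{2}=\sqrt{2(n-1)}$, the self-shrinking cylinder radius, and the apex is far away at scale $1$, so the half-cone is well approximated by $\mathbb{S}^1\times\mathbb{R}$, whose entropy exceeds $1$. Your heuristic that ``far from the apex the cone looks like its tangent hyperplane'' is only valid when the center lies \emph{on} the cone; centered on the axis one sees a cylinder instead.

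Since $\lambda[\mathcal{C}_\pm]$ can exceed $1$, the decomposition $F[\mathcal{C}+x_0]=F[\mathcal{C}_+ +x_0]+F[\mathcal{C}_- +x_0]$ together with a per-summand bound cannot work as stated. The competition idea is not hopeless---in the regime above $F[\mathcal{C}_- + he_1]$ is exponentially small in $h$, so the sum is still well below $2$---but making this rigorous requires understanding \emph{all} near-maximizers of $F[\mathcal{C}_+ + x_0]$, not just those along rays of the cone, and proving a quantitative trade-off. That is essentially the difficulty the paper identifies as unresolved, and your sketch does not supply the missing ingredient.
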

When $m_1 = m_2 = m$, observe that when $m \to 0$, $\mathcal{C}$ converges to a multiplicity 2 plane which has entropy 2, and that when $m \to \infty$, $\mathcal{C}$ converges after suitable translations to a cylinder $\mathbb{R} \times \mathbb{S}^{n-1}$ which has entropy strictly less than 2. This also explains why the two connected components of $\mathcal{L}(\mathcal{C})$ need to be in different half-spaces, for otherwise when $m$ is small the double cone could be close to a multiplicity two cylinder which has entropy strictly larger than 2. \par 
It is not so hard to calculate the Gaussian area of $\mathcal{C}$ at the origin, but unlike self-shrinkers, the maximum in our case needs not to happen at the origin (the argument for self-shrinkers can be found in eg. Section 7 of \cite{CMGeneric}). In fact when $m$ is large, the entropy is achieved far away from the origin (in a region where the cone looks more like a cylinder). Although we have strong numerical evidence that the conjecture is true, the analysis of the Gaussian area functional on the cone centered away from the origin is complicated to handle. \par 
Less clear is the entropy of cones with $O(p + 1) \times O(n - p + 1)$ symmetry.  Ilmanen and White \cite{IlmanenWhite} give an exact formula for the Gaussian density of $\mathcal{C}_{n,p}$ at the origin:
\begin{align*}
	\Theta_{\mathcal{C}_{n,p}}(0) = \frac{\sigma_{p} \sigma_{n-p}}{\sigma_{n}} \left(\frac{p}{n}\right)^{p/2}\left(\frac{n-p}{n}\right)^{(n-p)/2},
\end{align*}
where $\sigma_p$ is the volume of the unit sphere in $\mathbb{R}^{p+1}$. It can be checked this is less than 2 for all $n,p$ (the proof is by rather tedious computation so we omit it here, but one can easily verify by numerics as well). Since $\mathcal{C}_{n,p}$ are minimal, they are also self-shrinkers. It follows from a theorem of Colding--Minicozzi \cite{CMGeneric} that the entropy of $\mathcal{C}_{n,p}$ is achieved at the origin. Hence, in fact, $\lambda[\mathcal{C}_{n,p}] = \Theta_{\mathcal{C}_{n,p}}(0) < 2$. For example, the cone $\mathcal{C}_{2,1} \subset \mathbb{R}^4$ has entropy $\frac{3}{2}$. It is therefore reasonable to make the following conjecture, partly due to Solomon:
\begin{conj}[cf. Section 4 of \cite{IlmanenWhite}]
	Any double cone with $O(p+1) \times O(n - p + 1)$ symmetry has entropy at least that of $\mathcal{C}_{n,p}$ and at most 2.
\end{conj}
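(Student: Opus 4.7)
Since this is a conjecture, the plan is speculative. The strategy is to split the inequality $\lambda[\mathcal{C}_{n,p}] \le \lambda[\mathcal{C}] < 2$ into an entropy lower bound by symmetrization and an entropy upper bound by compactness-contradiction.

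For the lower bound, the starting point is the identity $\lambda[\mathcal{C}_{n,p}] = \Theta_{\mathcal{C}_{n,p}}(0)$, which holds because $\mathcal{C}_{n,p}$ is minimal and hence a self-shrinker, so its entropy is attained at the origin by the Colding--Minicozzi argument \cite{CMGeneric}. Since $\Theta_{\mathcal{C}}(0)$ for any cone is proportional to the volume of the link $\mathcal{L}(\mathcal{C})$, it suffices to show $\mathcal{H}^n(\mathcal{L}(\mathcal{C})) \ge \mathcal{H}^n(\mathcal{S}_{n,p})$, i.e., that $\mathcal{S}_{n,p}$ minimizes volume among $O(p+1) \times O(n-p+1)$-invariant hypersurfaces in the sphere of the appropriate dimension. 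The plan is a symmetrization: any such invariant hypersurface is determined by a profile curve in a spherical quadrant, reducing the volume functional to a one-dimensional integral whose Euler--Lagrange equation is solved by the Clifford profile of $\mathcal{S}_{n,p}$. One then hopes to upgrade the Simons/Bombieri--De Giorgi--Giusti calibration of $\mathcal{C}_{n,p}$ in $\mathbb{R}^{n+2}$ to a spherical analog, showing the critical point is a global minimizer within the invariant class.

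For the upper bound, the plan is a compactness-contradiction argument: suppose a sequence $\mathcal{C}_i$ of $O(p+1)\times O(n-p+1)$-invariant double cones has $\lambda[\mathcal{C}_i] \to 2$, with near-maximizers $(\rho_i, x_i)$. By averaging over the symmetry group, one may take $x_i$ to lie in the fixed set of $O(p+1) \times O(n-p+1)$, which is either the origin or a low-dimensional subspace. Passing to a subsequential limit of the rescaled translates, one obtains a stationary varifold with entropy exactly $2$, which by Brakke regularity and the low-entropy classification must be a multiplicity-two plane. The hope is to show this limit is incompatible with the prescribed symmetry: the sphere factors $\mathbb{S}^p$ and $\mathbb{S}^{n-p}$ cannot collapse while $\mathcal{C}_i$ remains a smooth double cone with the full $O(p+1) \times O(n-p+1)$ action, and translations along the fixed subspace cannot produce a plane of multiplicity two while preserving the invariant structure on both ``halves'' of the double cone.

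The main obstacle is the same one that blocks \cref{yao-conjecture}: the supremum defining $\lambda[\mathcal{C}]$ need not be attained at the origin, so both steps are sensitive to the behavior of the Gaussian area under translations along the invariant subspace. For the lower bound, the known minimality results for Simons-type cones are often local, and their extension to a global area inequality within the full symmetric class is subtle, especially in low dimensions where the Simons cone itself is unstable. For the upper bound, pinpointing precisely which limiting behavior drives $\lambda[\mathcal{C}_i]$ to $2$ demands a quantitative analysis of the Gaussian area functional on the moduli space of invariant cones, which the paper itself describes as ``complicated to handle.'' Resolving either direction is likely to require substantially more input than the moving plane arguments used elsewhere in the paper.
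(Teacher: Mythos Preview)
The statement is labeled a conjecture in the paper's concluding section and is presented there as open; the paper gives no proof, only the density formula of Ilmanen--White and the observation that $\lambda[\mathcal{C}_{n,p}]=\Theta_{\mathcal{C}_{n,p}}(0)<2$. You correctly flag this at the outset, so there is no argument in the paper to compare your proposal against.

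That said, your lower-bound strategy contains a genuine gap distinct from the one you name. You reduce $\lambda[\mathcal{C}]\ge\lambda[\mathcal{C}_{n,p}]$ to the link-volume inequality $\mathcal{H}^n(\mathcal{L}(\mathcal{C}))\ge\mathcal{H}^n(\mathcal{S}_{n,p})$ via the chain $\lambda[\mathcal{C}]\ge\Theta_{\mathcal{C}}(0)\propto\mathcal{H}^n(\mathcal{L}(\mathcal{C}))$. The chain is valid, but the link-volume inequality is false in general. An $O(p+1)\times O(n-p+1)$-invariant hypersurface link in the sphere is a finite union of principal orbits $\cos\theta_i\,\mathbb{S}^p\times\sin\theta_i\,\mathbb{S}^{n-p}$, each of volume proportional to $\cos^p\theta_i\sin^{n-p}\theta_i$; this quantity is maximized at the Clifford angle and tends to zero as $\theta_i\to 0$ or $\theta_i\to\pi/2$. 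A two-component link with both $\theta_i$ near an endpoint therefore has $\mathcal{H}^n(\mathcal{L}(\mathcal{C}))<\mathcal{H}^n(\mathcal{S}_{n,p})$, so no symmetrization or calibration argument can establish the inequality you want. The entropy of such a degenerating cone is presumably still bounded below (it is close to a high-multiplicity object at some off-origin scale), but that is precisely the contribution your reduction throws away. Note also that your stated obstacle is inverted for this direction: the fact that the entropy supremum need not be attained at the origin \emph{helps} the lower bound, since $\lambda[\mathcal{C}]\ge\Theta_{\mathcal{C}}(0)$ is automatic; the actual problem is that $\Theta_{\mathcal{C}}(0)$ itself can drop below $\lambda[\mathcal{C}_{n,p}]$, so the lower bound has to come from some other center and scale that your plan never examines.
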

Finally we discuss the rotational symmetry of singular self-expanders. As we remarked before, the Hopf lemma \cref{hopf-lemma} cannot deal with triple junction singularities which are a possibility as we have seen in \cref{ode-appendix}.
\begin{ques}
	What can we say about the symmetry of a singular self-expander coming out of a rotationally symmetric double cone? \par 
	In particular, if $\Sigma$ is a self-expander asymptotic to $\mathcal{C}_m$ (notation as in \cref{ode-appendix}) that is smooth away from $\{x_1 = 0\}$ and only has triple junction singularities on $\{x_1 = 0\}$, is $\Sigma$ rotationally symmetric?
\end{ques}
It is possible that one can use a parabolic variant of the argument of Bernstein--Maggi \cite{BernsteinMaggi} for singular Plateau surfaces (in particular Lemma 2.4 therein) to prove the desired rotational symmetry.

\appendix

\section{Construction of the Smooth Flow}
\label{smooth-construction-appendix}
Here we give a more detailed overview of the construction of smooth Morse flow lines from \cref{construction-of-smooth-flow}. The construction is an adaptation of the work of Bernstein and Wang in a series of papers, including \cite{BWMountainPass}, \cite{BWRelativeEntropy}, \cite{BWTopologicalUniqueness} and \cite{BWTopology}. \par 
Throughout the section, we assume $\mathcal{C} \subset \mathbb{R}^{n+1}$ is a smooth cone. The first proposition was originally proven for self-shrinkers in \cite{BWTopology} and we give a modified proof for self-expanders.
\begin{prop}[cf. Lemma 4.3 of \cite{BWTopology}]
	\label{distance-estimate}
	Suppose $\Sigma$ is an hypersurface $C^{2,\alpha}$-asymptotic to $\mathcal{C}$, and there is $N > 0$ such that $\Sigma \setminus B_{NR}(0) \subset \mathcal{T}_{R^{-1}}(\mathcal{C})$ for $R > 1$. If $\{\Sigma_t\}_{t \in [1,T]}$ is an integral Brakke flow starting from $\Sigma$ in $\mathbb{R}^{n+1}$, then there is a constant $N' > 0$ such that 
	\begin{align*}
		\Sigma_t \setminus B_{N'R \sqrt{t}}(0) \subset \mathcal{T}_{R^{-1}\sqrt{t}}(\mathcal{C})
	\end{align*}
	for $R > 1$.
\end{prop}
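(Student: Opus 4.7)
The strategy is to apply the avoidance principle for integral Brakke flows against smooth shrinking sphere barriers, which is the weak-flow analogue of the argument in \cite{BWTopology}.

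Fix $R > 1$ and $t \in [1,T]$ and, arguing by contradiction, assume there is $x_0 \in \Sigma_t$ with $|x_0| \geq N'R\sqrt{t}$ and $d := \dist(x_0, \mathcal{C}) \geq R^{-1}\sqrt{t}$. The basic barrier is the shrinking sphere $\Lambda_s = \partial B_{r(s)}(x_c)$ with $r(s)^2 = r_0^2 - 2n(s - 1)$, which is a smooth MCF on $[1, 1 + r_0^2/(2n))$. By avoidance of integral Brakke flows against smooth MCF (\S 10 of \cite{Ilmanen}), the inclusion $B_{r(s)}(x_c) \cap \Sigma_s = \emptyset$ persists in $s$ whenever $B_{r_0}(x_c) \cap \Sigma = \emptyset$. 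Thus if we can choose $x_c$ and $r_0$ so that (i) $x_0 \in B_{r(t)}(x_c)$, (ii) $r_0^2 > 2n(t-1)$, and (iii) $B_{r_0}(x_c) \cap \Sigma = \emptyset$, then $x_0 \notin \Sigma_t$, a contradiction.

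Condition (iii) is arranged using the initial asymptotic estimate: for every $R_1 > 1$ we have $\Sigma \subset B_{NR_1}(0) \cup \mathcal{T}_{R_1^{-1}}(\mathcal{C})$, so it is enough that the initial ball avoid both $B_{NR_1}(0)$ and $\mathcal{T}_{R_1^{-1}}(\mathcal{C})$. When $d \gtrsim \sqrt{t-1}$ one may simply take $x_c = x_0$ and $r_0 \sim d/2$; when $d \ll \sqrt{t-1}$ (which is the case of interest for large $R$) we instead offset $x_c$ from $x_0$ in the direction $\nu_{\mathcal{C}}(y_0)$, where $y_0 \in \mathcal{C}$ is a point closest to $x_0$, choosing the offset so that the larger ball of radius $r_0 \sim \sqrt{t-1}$ required for persistence still fits in the complement of $\mathcal{T}_{R_1^{-1}}(\mathcal{C})$ while still containing $x_0$ throughout $[1,t]$.

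The main obstacle is the balance between (ii), which forces $r_0 \gtrsim \sqrt{t-1}$, and (iii), which for large $R$ restricts $r_0$ to a tube complement of width only $\sim R^{-1}\sqrt{t}$. This tension can be managed for all $R > 1$ uniformly by choosing $N'$ large in terms of $N$, $n$, and $T$ and exploiting the parabolic scale invariance of the statement: one partitions $[1,t]$ into a finite number of sub-intervals of parabolic length comparable to $R^{-2}t$, uses the distance estimate already obtained at time $s_i$ as the input for the barrier on $[s_i, s_{i+1}]$, and iterates. Carrying out this bookkeeping carefully produces the desired $N' = N'(N, n, T)$ valid for every $R > 1$.
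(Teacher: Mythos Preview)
Your barrier-and-avoidance setup is correct and matches the paper's, but the resolution you propose for the ``main obstacle'' does not close. Partitioning $[1,t]$ into sub-intervals of parabolic length $\sim R^{-2}t$ gives on the order of $R^2(t-1)/t$ steps; on each step the crude barrier only shows the flow drifts by at most $C R^{-1}\sqrt{t}$ into a larger tube, so after $\sim R^2$ iterations the accumulated tube width is $\sim R\sqrt{t}$, not $R^{-1}\sqrt{t}$. Any iteration scheme of this kind will produce an $N'$ depending on $R$, which is exactly what the statement forbids.

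The point you are missing is that the ``tube complement of width only $\sim R^{-1}\sqrt{t}$'' is the wrong picture: because $\mathcal{C}$ is a cone, the normal tubular region $\{x + s\,\nu_\mathcal{C}(x): 0<s<\lambda_0|x|\}$ has width growing \emph{linearly} in $|x|$. The paper exploits this in a single shot. After the crude estimate $\Sigma_t\setminus B_{NR+\sqrt{2n(t-1)}}(0)\subset \mathcal{T}_{R^{-1}+\sqrt{2n(t-1)}}(\mathcal{C})$, one takes $y\in \Sigma_t\setminus B_{N'R\sqrt{t}}(0)$, lets $x\in\mathcal{C}$ be its nearest-point projection, and centers the barrier not near $y$ but at $y_0 = x + \lambda_0|x|\,\nu_\mathcal{C}(x)$. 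The initial ball around $y_0$ disjoint from $\Sigma$ has radius $\rho(y_0)=\lambda_0|x|-R^{-1}\gtrsim \lambda_0 N'R\sqrt{t}$, which for $N'$ large (depending only on $n$, $N$, and the cone via $\lambda_0$) comfortably exceeds $\sqrt{2n(t-1)}$. A direct computation then gives $\dist(y,\mathcal{C})\le \lambda_0|x|-\sqrt{(\lambda_0|x|-R^{-1})^2-2n(t-1)}\le R^{-1}\sqrt{t}$. So the correct fix is geometric (offset by $\lambda_0|x|$, not by $O(\sqrt{t})$), not iterative.
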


\begin{proof}
	For any $x \in \mathbb{R}^{n+1} \setminus (B_{NR}(0) \cup \mathcal{T}_{R^{-1}}(\mathcal{C}))$ let 
	\begin{align*}
		\rho(x) = \inf\{ \rho' \ge 0 \mid B_{\rho'}(x) \cap (B_{NR}(0) \cup \mathcal{T}_{R^{-1}}(\mathcal{C})) \ne \emptyset\}
	\end{align*}
	and let 
	\begin{align*}
		\rho_t(x) = \begin{cases} 
			\sqrt{\rho(x)^2 - 2n(t-1)} & \rho(x)^2 \ge 2n(t-1) \\ 0 & \rho(x)^2 < 2n(t-1)
		\end{cases}
	\end{align*}
	be the corresponding MCF starting from $B_{\rho}(x)$. Let 
	\begin{align*}
		U_t = \bigcup_{\rho_t(x) > 0} B_{\rho_t(x)}(x).
	\end{align*}
	be the time $t$ slice of the above MCF. Since initially $\Sigma \cap U_1 = \emptyset$ maximum principle implies that $U_t \cap \Sigma_t = \emptyset$ for all $t \in [1,T)$. This proves that 
	\begin{align}
		\Sigma_t \setminus B_{NR+ \sqrt{2n(t-1)}}(0) \subset \mathcal{T}_{R^{-1} + \sqrt{2n(t-1)}}(\mathcal{C})
		\label{eq31}
	\end{align}
	since $\mathbb{R}^{n+1} \setminus (B_{NR+ \sqrt{2n(t-1)}}(0) \cup \mathcal{T}_{R^{-1} + \sqrt{2n(t-1)}}(\mathcal{C})) \subset U_t$.\par 
	Next we consider the map $\Phi: \mathcal{C} \setminus \{0\} \times \mathbb{R} \to \mathbb{R}^{n+1}$ given by $\Phi(x,\lambda) = x + \lambda \nu_\mathcal{C}$. Choose $\lambda_0 < 1/2$ depending on $\mathcal{C}$ small enough so that $\Phi|_{\mathcal{L}(\mathcal{C}) \times (-2\lambda_0,2\lambda_0)}$ is a diffeomorphism onto its image. It follows, since $\mathcal{C}$ is a cone, that $\Phi$ is a diffeomorphism on the set 
	$\{(x,\lambda) \mid \abs{\lambda} < 2\lambda_0 \abs{x}\}$. \par 
	Consider, for some $N'$ to be chosen later,
	\begin{align*}
		V_t = \mathbb{R}^{n+1} \setminus (U_t \cup  B_{N'R\sqrt{t}}(0)).
	\end{align*}
	We first claim that $N'$ can be chosen so that $y \in V_t$ can be written as $x + \lambda'\abs{x} \nu_\mathcal{C}(x)$ for some $\abs{\lambda'} < \lambda_0$. Indeed, since $y \not \in U_t$ we have that $\dist(y, \mathcal{C}) \le R^{-1} + \sqrt{2n(t-1)}$ provided we choose $N'$ large enough so that 
	\begin{align*}
		N'R\sqrt{t} > NR + \sqrt{2n(t-1)}.
	\end{align*}
	Let $x$ be the nearest point projection of $y$ onto $\mathcal{C}$, then using $R > 1$ we have that
	\begin{align*}
		\frac{\abs{y-x}}{\abs{x}} &\le \frac{1 + \sqrt{2nt}}{N'\sqrt{t} - 1 - \sqrt{2nt}} = \frac{(\sqrt{t})^{-1} + \sqrt{2n}}{N' - (\sqrt{t})^{-1} - \sqrt{2n}} < \frac{1+\sqrt{2n}}{N' - 1 - \sqrt{2n}} < \lambda_0
	\end{align*}
	provided we choose $N'$ large depending on $n$ only. Hence the claim holds. \par 
	Let $y \in \Sigma_t \setminus B_{N'R\sqrt{t}}(0) \subset V_t$. We claim that, up to further increasing $N'$, $\dist(y, \mathcal{C}) < R^{-1}\sqrt{t}$ and this will finish the proof. To this end consider $y_0 = x + \lambda_0  \abs{x} \nu_\mathcal{C}$. We have that
	\begin{align*}
		\abs{y_0} = \abs{x + \lambda_0 \abs{x}\nu_\mathcal{C}} = \sqrt{1 + \lambda_0^2}\abs{x} > NR + \lambda_0\abs{x} - \frac{1}{3}R^{-1},
	\end{align*}
	if $N'$ is chosen large enough so that
	\begin{align*}
		\abs{x} \ge (N'R - \sqrt{2n})\sqrt{t} - R^{-1} > 4NR - R^{-1},
	\end{align*}
	where we used the fact that $\sqrt{1 + \lambda_0^2} - \lambda_0 > \frac{1}{3}$.
	This implies that $B_{\lambda_0\abs{x} - R^{-1}}(y_0) \cap B_{NR}(0) = \emptyset$. Since $\dist(y_0,\mathcal{C}) = \lambda_0 \abs{x}$ we conclude that
	\begin{align*}
		\rho(y_0) = \lambda_0\abs{x} - R^{-1}.
	\end{align*}
	Increasing $N'$ if necessary we can also ensure that
	\begin{align*}
		\lambda_0\abs{x} - R^{-1} > \sqrt{2n(t-1)}.
	\end{align*}
	Since $y \not\in B_{\rho_{t}(y_0)}(y_0) \subset U_t$ we can estimate
	\begin{align*}
		\dist(y,\mathcal{C}) &\le \dist(y_0,\mathcal{C}) - \rho_t(y_0) = 
		\lambda_0 \abs{x} - \sqrt{(\lambda_0 \abs{x} - R^{-1})^2 - 2n(t-1)}.
	\end{align*}
	To finish the proof we compute, for large $N'$, 
	\begin{align*}
		&\phantom{{}={}}(\lambda_0 \abs{x} - R^{-1}\sqrt{t})^2 - ((\lambda_0 \abs{x} - R^{-1})^2 - 2n(t-1)) \\
		&= 2R^{-1}\lambda_0 \abs{x} (1 - \sqrt{t}) + R^{-2}(t - 1) + 2n(t-1) \\
		&= R^{-1}(\sqrt{t} - 1)(-2\lambda_0 \abs{x} + (R^{-1} + 2nR)(\sqrt{t}+1)) \le 0.
	\end{align*}
	which is equivalent to 
	\begin{align*}
		\lambda_0 \abs{x} - R^{-1}\sqrt{t} \le \sqrt{(\lambda_0 \abs{x} - R^{-1})^2 - 2n(t-1)} \implies \dist(y,\mathcal{C}) \le R^{-1}\sqrt{t} 
	\end{align*}
	provided $N'$ is chosen large enough so that
	\begin{align*}
		\lambda_0 \abs{x} \ge (N'R - \sqrt{2n})\sqrt{t} - R^{-1} \ge (R^{-1} + 2nR)\sqrt{t}. &\qedhere
	\end{align*}
\end{proof}

The next proposition shows the desired regularity for an asymptotically conical MCF. Since the proof is used repeatedly in our presentation, we have also included a proof for the sake of completeness. In the proof we will use the notation $[f]_{\alpha;\Omega}$ to denote the $\alpha$-H\"{o}lder seminorm of $f$ on $\Omega$ (note $\Omega$ could be a subset of $\mathbb{R}^{n+1}$ or a time interval), i.e.
\begin{align*}
	[f]_{\alpha;\Omega} = \sup_{x, y \in \Omega, x \ne y} \frac{\abs{f(x) - f(y)}}{\abs{x - y}^\alpha}.
\end{align*}
\begin{prop}[Lemma 5.3(2) of \cite{BWTopologicalUniqueness}]
	\label{interior-estimates}
	Suppose $\Sigma$ is an hypersurface $C^{2,\alpha}$-asymptotic to $\mathcal{C}$ and let $\{\Sigma_t\}_{t \in [1,T)}$ be a MCF starting from $\Sigma$. If there is $N > 0$ such that $\Sigma_1 \setminus B_{NR}(0) \subset \mathcal{T}_{R^{-1}}(\mathcal{C})$ for all $R > 1$, then there is $N' > 0$ such that $\Sigma_t \setminus B_{N'\sqrt{t}}(0)$ can be written as a (smooth) normal graph over $\mathcal{C} \setminus B_{N'\sqrt{t}}(0)$. In particular we have the uniform curvature bound 
	\begin{align*}
		\sup_{t \in [1,T)} \sup_{\Sigma_t \setminus B_{N'\sqrt{t}}} \abs{A_{\Sigma_t}} < \infty.
	\end{align*}
\end{prop}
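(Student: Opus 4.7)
The plan is to reduce the statement to a uniform time-$1$ estimate via parabolic rescaling, then combine the distance bound of \cref{distance-estimate} with pseudolocality of mean curvature flow and the Ecker--Huisken interior estimates. The central observation is that $\mathcal{C}$ is scale-invariant, so $t_0^{-1/2}\mathcal{C} = \mathcal{C}$ for every $t_0 > 0$, which makes parabolic rescaling a natural device here.

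First, fix $t_0 \in [1,T)$ and consider the rescaled flow $\tilde{\Sigma}_s = t_0^{-1/2}\Sigma_{t_0 s}$ for $s \in [t_0^{-1}, T/t_0)$. Since $\mathcal{C}$ is a cone, the hypothesis on $\Sigma_1$ transfers to $\tilde\Sigma_{t_0^{-1}}$ with an even tighter distance estimate to $\mathcal{C}$. Applying \cref{distance-estimate} to $\tilde{\Sigma}$ (or equivalently, to $\Sigma$ and rescaling its conclusion by $t_0^{-1/2}$), for any prescribed small $\eta > 0$ one obtains $N' = N'(\eta,N,n)$ with
\[
\tilde\Sigma_1 \setminus B_{N'}(0) \subset \mathcal{T}_\eta(\mathcal{C}).
\]
Because scaling by $\sqrt{t_0}$ converts $\tilde\Sigma_1$ back to $\Sigma_{t_0}$, it suffices to produce a normal $C^{2,\alpha}$ graph structure of $\tilde\Sigma_1$ over $\mathcal{C} \setminus B_{N'}(0)$ together with a bound $|A_{\tilde\Sigma_1}| \le C$, all with constants independent of $t_0 \in [1,T)$.

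Second, I fix a base point $x_0 \in \mathcal{C}$ with $|x_0| \ge N'$. Smoothness of $\mathcal{C}$ (more precisely, of its link $\mathcal{L}(\mathcal{C})$) gives $r_0 = r_0(\mathcal{C}) > 0$ such that $\mathcal{C} \cap B_{r_0|x_0|}(x_0)$ is a Lipschitz graph over $T_{x_0}\mathcal{C}$ with constant at most $\eta/4$. Since $\Sigma$ is $C^{2,\alpha}$-asymptotic to $\mathcal{C}$ and $t_0 \ge 1$, the rescaled initial datum $\tilde\Sigma_{t_0^{-1}}$ is a normal $C^{2,\alpha}$-graph over $\mathcal{C}$ on $B_{r_0|x_0|}(x_0)$ with uniformly small $C^{2,\alpha}$-norm; in particular it is an $\eta/2$-Lipschitz graph over $T_{x_0}\mathcal{C}$ on $B_{r_0|x_0|/2}(x_0)$. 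Pseudolocality for smooth mean curvature flow (e.g.\ the smooth case of \cite{IlmanenNevesSchulze}) then preserves this graphicality with a slightly worse constant on the cylinder $C_{r_0 N'/8}(x_0)$ for a parabolic time of order $(r_0 N')^2$; by enlarging $N'$ once more I ensure $s = 1$ lies in this interval. The Ecker--Huisken interior curvature estimates \cite{EHInterior} then yield $|A_{\tilde\Sigma_s}| \le C$ on a further shrunk cylinder up to and including $s = 1$, with $C$ depending only on $\mathcal{C}$ and the $C^{2,\alpha}$-asymptotic constants of $\Sigma$. Parabolic Schauder theory upgrades these to uniform $C^{2,\alpha}$ bounds.

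Finally, covering $\mathcal{C} \setminus B_{N'}(0)$ by such cylinders and using $\tilde\Sigma_1 \subset \mathcal{T}_\eta(\mathcal{C})$ to rule out folding (each fiber of the tubular neighborhood meets $\tilde\Sigma_1$ at most once, thanks to the small Lipschitz bound over the local tangent plane), one recovers a genuine normal graph structure of $\tilde\Sigma_1$ over $\mathcal{C} \setminus B_{N'}(0)$, and unscaling by $\sqrt{t_0}$ gives the stated result with the scale-invariant curvature bound $|A_{\Sigma_{t_0}}| \le C t_0^{-1/2}$. The main obstacle I foresee is uniformity of the constants as $t_0 \to T$, where interior singularities may well be forming near the origin; this is handled by the fact that \cref{distance-estimate} only uses an exterior avoidance barrier, so any singularity inside $B_{N'\sqrt{t}}(0)$ is irrelevant to estimates on $\Sigma_t \setminus B_{N'\sqrt{t}}(0)$, and pseudolocality runs forward from the initial slice of the rescaled flow rather than backward from $T$.
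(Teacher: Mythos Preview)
Your approach is correct and takes a genuinely different, arguably cleaner, route than the paper. The paper works directly on the unrescaled flow: for fixed $t$ it writes $\Sigma_s$, $s\in[1,t]$, as a graph $f_{x_0}(s,\cdot)$ over $T_{x_0}\mathcal{C}$ (via pseudolocality), then applies H\"older and Schauder estimates to the quasilinear graphical MCF equation, tracking the $\sqrt{t}$ scale by hand through estimates on $|f(s,x)-f(1,x_0)|$ and $|\nabla f(s,x)-\nabla f(1,x_0)|$ on balls of radius $\rho\eta$ with $\eta\sim|x_0|$. Your parabolic rescaling $\tilde\Sigma_s = t_0^{-1/2}\Sigma_{t_0 s}$ absorbs all the $\sqrt{t}$ factors into the geometry, reducing everything to a single uniform time-$1$ estimate; scale invariance of $\mathcal{C}$ replaces the paper's explicit bookkeeping. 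Both arguments pivot on pseudolocality; the paper then runs Schauder theory on the graph PDE, while you invoke the Ecker--Huisken interior curvature estimates and only afterwards upgrade via Schauder. Your version is more conceptual; the paper's yields slightly more explicit decay information along the way.

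One point to tighten: when $t_0$ is close to $1$ the rescaled elapsed time $1 - t_0^{-1}$ is small, so the smoothing form of Ecker--Huisken (with the $1/t$ term) does not by itself give a uniform bound at $s=1$. You need either the version that propagates an initial curvature bound, or to split into cases $t_0\in[1,2]$ and $t_0\ge 2$. The required initial bound is available, since $\tilde\Sigma_{t_0^{-1}} = t_0^{-1/2}\Sigma_1$ inherits $|A|\le C|x|^{-1}$ from the $C^{2,\alpha}$-asymptotic hypothesis and this scales correctly; your phrase ``depending on the $C^{2,\alpha}$-asymptotic constants of $\Sigma$'' suggests you had this in mind, but make explicit which Ecker--Huisken statement you are invoking.
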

\begin{proof}
	Fix $t \in [1,T)$ and let $\delta > 0$. Since $\Sigma$ is asymptotically conical, by \cref{expander-curvature-estimates} there is $N_1$ and $\varepsilon > 0$ depending on $\delta$ such that for any $x_0 \in \mathcal{C} \setminus B_{N_1}(0)$, $\Sigma_1 \cap C_{\eta}(x_0)$ can be written as a graph $\tilde{f}_{x_0}(x)$ over some neighborhood of $x_0$ in $T_x\mathcal{C}$ containing $B^n_{\eta}(x_0)$. Here $\eta = \varepsilon\abs{x}$. Moreover up to increasing $N_1$ we can ensure \cref{distance-estimate} holds, and that $\tilde{f}_{x_0}$ satisfies the estimates
	\begin{align}
		\label{appendix-eq-1}
		\sum_{i=0}^2 \eta^{-1+i} \sup_{B^n_\eta(x_0)} |\nabla^i \tilde{f}_{x_0}| + r^{1 + \alpha} [\nabla^2 \tilde{f}_{x_0}]_{\alpha} < \delta
	\end{align}
	By pseudolocality of MCF (Theorem 1.5 in \cite{IlmanenNevesSchulze}), given $\varepsilon > 0$, there is $N_1 > 0$ such that for every $x_0 \in \Sigma_1 \setminus B_{N_1}(0)$ and $s \in [1,t]$, $\Sigma_s \cap C_{\eta/2}(x_0)$ can be written as a normal graph over $\Sigma_1 \cap B^n_{\eta/2}(x_0)$. Combining the above two facts, we see that for sufficiently small $\delta$ and $\varepsilon$, $\Sigma_s \cap C_{\eta/2}(x_0)$ can be written as the graph of a function $f_{x_0}(s,x)$ over some neighborhood of  $T_{x_0}\mathcal{C}$ for all $s \in [1,t]$ and $x_0 \in \mathcal{C} \setminus B_{N_1}(0)$. Moreover, $f_{x_0}$ satisfies the pointwise estimates 
	\begin{align*}
		(\eta/2)^{-1} \sup_{B^n_{\eta/2}(x_0)} \abs{f_{x_0}(s,\cdot)} + \sup_{B^n_{\eta/2}(x_0)} \abs{\nabla_x f_{x_0}(s,\cdot)} < 1.    
	\end{align*}
	For the rest of the proof we fix an $x_0$ and put $f = f_{x_0}$. Since $\{\Sigma_s\}_{s \in [1,t]}$ is a graphical MCF near $x_0$, $f$ satisfies the evolution equation:
	\begin{align*}
		\frac{\partial f}{\partial s} = \sqrt{1 + \abs{\nabla_{x} f}^2} \Div\left(\frac{\nabla_{x}f}{\sqrt{1 + \abs{\nabla_{x} f}^2}}\right).
	\end{align*}
	This is a quasilinear parabolic equation, so we may use H\"{o}lder estimates (eg. Theorem 1.1 in Chapter 4 of \cite{LSU}) to get that 
	\begin{align*}
		\sup_{s \in [1,t]} [\nabla_x f(s,\cdot)]_{\alpha;B_{\eta/4}(x_0)} + \sup_{B^n_{\eta/4}(x_0)}[\nabla_x f(\cdot,x)]_{\alpha/2;[1,t]} \le C(\eta/4)^{-\alpha}.
	\end{align*}
	for any $\alpha \in (0,1)$. Standard Schauder estimates (see eg. Chapter 5 of \cite{Lieberman} or Theorem 5.1 in Chapter 4 of \cite{LSU}) yield higher order estimates of the form
	\begin{align*}
		\sum_{i=0}^2 (\eta/8)^{i-1}\sup_{B^n_{\eta/8}(x_0)}\abs{\nabla^i f(s,\cdot)} + (\eta/8)^{1 + \alpha} [\nabla^2 f(s,\cdot)]_{\alpha;B_{\eta/8}(x_0)} \le C
	\end{align*}
	for $s \in [1,t]$ and 
	\begin{align*}
		\sup_{x \in B^n_{\eta/8}(x_0)} [\nabla_x f_{x_0}(s,x)]_{\frac{1}{2};[1,t]} \le C (\eta/8)^{-1}.
	\end{align*}
	From the above we may estimate
	\begin{align*}
		\abs{f_{x_0}(s,x) - f_{x_0}(1,x_0)} \le C(s-1)(\eta/8)^{-1} + \delta \abs{x-x_0} +  C(\eta/8)^{-1}\abs{x-x_0}^2
	\end{align*}
	where we also used the evolution equation and the fact that $\abs{\nabla_x f(1,x_0)} < \delta$ from \cref{appendix-eq-1}. These implies that, for $\rho < 1/8$, a fixed $s \in [1,t]$ and $x_0 \in \mathcal{C} \setminus B_{\tilde{N}\sqrt{s}}(0)$,
	\begin{align*}
		(\rho \eta)^{-1}\sup_{x \in B_{\rho \eta}^n(x_0)} \abs{f(s,x)} &\le (\rho\eta)^{-1}N_1\abs{x_0}^{-1} +  C(s-1)\rho^{-1}\eta^{-2} + \delta + C\rho \\
		&\le  \frac{(\rho \varepsilon)^{-1} N_1}{\tilde{N}^2 s} + \frac{C(s-1)\rho^{-1}}{\tilde{N}^2 s} + \delta + C\rho
	\end{align*}
	where we used that $\abs{f(1,x_0)} < N_1 \abs{x_0}^{-1}$ by \cref{distance-estimate}. The right hand side of the above equation can be made arbitrarily small provided we choose $\delta$ and $\rho$ small enough and $\tilde{N}$ large enough. Similarly we can estimate the derivative 
	\begin{align*}
		\abs{\nabla_{x_0} f_{x_0}(s,x) - f_{x_0}(1,x_0)} \le C(\eta/8)^{-1} \abs{x - x_0} + C(\eta/8)^{-1} \sqrt{s - 1}
	\end{align*}
	and 
	\begin{align*}
		\sup_{x \in B^n_{\rho\eta}(x_0)}\abs{\nabla_x f(s,x)} \le \delta + C\rho + C\frac{\sqrt{s-1}}{\tilde{N} \sqrt{s}}
	\end{align*}
	which can be made arbitrarily small as well. These two decay estimates together with Schauder estimates above with $\eta/8$ replaced by $\rho \eta$ give
	\begin{align*}
		\sum_{i=0}^2 (\eta/8)^{i-1}\sup_{B^n_{\eta/8}(x_0)}\abs{\nabla^i f(s,\cdot)} + (\eta/8)^{1 + \alpha} [\nabla^2 f(s,\cdot)]_{\alpha;B_{\eta/8}(x_0)} \le \frac{1}{2} + C(\rho + \rho^{1 + \alpha}) 
	\end{align*}
	which can be made to be less than 1 provided $\rho$ is chosen small enough. This proves that $\Sigma_t \cap C_{\rho\eta}(x_0)$ is a graph over (a neighborhood of) $T_{x_0}\mathcal{C}$ for $x_0 \in \mathcal{C} \setminus B_{\tilde{N}\sqrt{t}}(0)$ with derivative bounds up to the second order. The curvature bounds follow easily, and higher order bounds follow similarly using Schauder estimates.
\end{proof}

Given a MCF $\{\Sigma_t\}_{t \in I}$, the \textit{expander mean curvature} of $\Sigma_t$ is  
\begin{align*}
	E_{\Sigma_t}(x) = 2tH_{\Sigma_t} + \inner{x,\nu_{\Sigma_t}}.
\end{align*}
We say $\{\Sigma_t\}$ is \textit{expander mean convex} if the $E_{\Sigma_t}(x) > 0$ along the flow. For a fixed time $t$ and a hypersurface $\Sigma$, the relative expander mean curvature of $\Sigma$ is 
\begin{align*}
	E_\Sigma^t(x) = 2tH_\Sigma + x^\perp.
\end{align*}
For $\beta > 0$ define the auxiliary function $g_\beta: \mathbb{R}^+ \to \mathbb{R}^+$ by
\begin{align*}
	g_\beta(s) = s^{-\beta} e^{-\beta s}.
\end{align*}
We now prove the main existence theorem for expander mean convex hypersurfaces without entropy bound.
\begin{thm}[Existence Theorem, cf. Proposition 5.1 of \cite{BWTopologicalUniqueness}]
	\label{existence-theorem}
	Let $\Sigma$ be a hypersurface $C^{2,\alpha}$-asymptotic to $\mathcal{C}$ with no closed components. Suppose that there is $N$ such that $\Sigma \setminus B_{NR}(0) \subset \mathcal{T}_{R^{-1}}(\mathcal{C})$ and that there is $c,\beta > 0$ such that 
	\begin{align*}
		E_\Sigma(x) \ge cg_\beta (1 + \abs{x}^2) > 0, \;\; x\in \Sigma.
	\end{align*}
	Then there exists a unique MCF $\{\Sigma_t\}_{t \in [1,T)}$ with $\Sigma_1 = \Sigma$, where $T$ is the first singular time (possibly $\infty$). Moreover the MCF satisfies
	\begin{enumerate}
		\item $\Sigma_t$ is $C^{2,\alpha}$-asymptotic to $\mathcal{C}$ for all $t \in [1,T)$.
		\item $E_{\Sigma_t}(x) > cg_\beta(1 + \abs{x}^2 + 2n(t-1))$ for all $t \in [1,T)$ and $x \in \Sigma_t$.
		\item If $T < \infty$, we have 
		\begin{align*}
			\lim_{t \to T} \sup_{\Sigma_t \cap B_{N'\sqrt{t}}} \abs{A_{\Sigma_t}} = \infty.
		\end{align*}
	\end{enumerate}
\end{thm}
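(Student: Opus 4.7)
The plan is to follow the strategy of Proposition 5.1 in \cite{BWTopologicalUniqueness}. The proof decomposes into four parts: short-time existence and uniqueness of a smooth MCF, preservation of the asymptotic conical structure (item (1)), preservation of the quantitative expander mean-convexity (item (2)), and the blow-up characterization (item (3)).

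For existence, I would approximate $\Sigma$ by compact hypersurfaces $\Sigma^{R_i}$ obtained by intersecting $\Sigma$ with $\overbar{B_{R_i}(0)}$ and closing off smoothly along $\mathcal{C}$, so that the resulting surfaces have uniformly controlled second fundamental form and remain close to $\mathcal{C}$ at infinity. Standard short-time existence for compact MCF gives smooth flows $\{\Sigma^{R_i}_t\}$; the barrier argument of \cref{distance-estimate} and the Schauder-based graphical estimates of \cref{interior-estimates} provide $R_i$-independent $C^{2,\alpha}$ bounds on compact spacetime subsets, allowing a smooth subsequential limit. Uniqueness follows because pseudolocality forces any two solutions to agree outside a large ball, and then compact uniqueness for MCF takes over. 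Item (1) is then immediate from \cref{distance-estimate} and \cref{interior-estimates} applied to the resulting flow.

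For item (2), a direct computation using the MCF evolution equations for $H$ and $\langle x,\nu\rangle$ yields
\begin{align*}
(\partial_t - \Delta_{\Sigma_t} - \abs{A_{\Sigma_t}}^2)\, E_{\Sigma_t} = 0.
\end{align*}
Set $\Phi(x,t) = c\, g_\beta(s)$ where $s = 1 + \abs{x}^2 + 2n(t-1)$. Since $\partial_t s = 2n$, $\Delta_{\Sigma_t} s = 2n - 2H\langle x,\nu\rangle$, $\abs{\nabla_{\Sigma_t} s}^2 \le 4\abs{x}^2$, and $g_\beta''(s) \sim \beta^2 g_\beta(s)$ for large $s$, a direct calculation gives
\begin{align*}
(\partial_t - \Delta_{\Sigma_t})\Phi = -4c\, g_\beta''(s)\abs{x^T}^2 + 2c\, g_\beta'(s)\, H\langle x,\nu\rangle.
\end{align*}
The leading contribution $-4c\beta^2 g_\beta \abs{x^T}^2$ at infinity dominates both the other term above and the zeroth-order contribution $c\abs{A_{\Sigma_t}}^2 g_\beta$ (using the curvature decay $\abs{A_{\Sigma_t}}^2 \lesssim \abs{x}^{-2}$ from \cref{interior-estimates}) once $\beta$ is chosen sufficiently large depending on $\mathcal{C}$, $c$, and $n$. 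Hence $\Phi$ is a strict subsolution of the same parabolic equation. Since both $E_{\Sigma_t}$ and $\Phi$ have controlled decay at infinity by item (1), applying the parabolic maximum principle on a compact exhaustion of $\Sigma_t$ and passing to the limit yields the strict inequality $E_{\Sigma_t} > \Phi$ on $[1,T)$.

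For (3), if $T < \infty$ but $\sup_{\Sigma_t \cap B_{N'\sqrt{t}}}\abs{A_{\Sigma_t}}$ remained bounded as $t \to T$, then together with the exterior curvature bound from \cref{interior-estimates} we would have a uniform global curvature bound on $[1,T)$, allowing the flow to be extended smoothly past $T$ by standard MCF extension theorems and contradicting the minimality of $T$. The main technical obstacle is item (2): the bookkeeping required to verify the strict subsolution property uniformly in $\beta$, the careful choice of $\beta$ depending on $\mathcal{C}$ and on the constants from the interior curvature estimates, and the justification of the parabolic maximum principle on the noncompact $\Sigma_t$ (which is valid precisely because the asymptotic decay from item (1) ensures $E_{\Sigma_t} - \Phi$ vanishes at infinity at a rate allowing the exhaustion argument to close).
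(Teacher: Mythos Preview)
Your overall decomposition matches the paper's (and Bernstein--Wang's) strategy, and items (1) and (3) are handled essentially as in the paper, which simply defers them to \cref{interior-estimates}. For existence the paper takes a shorter route than your compact exhaustion: since $\Sigma$ is $C^{2,\alpha}$-asymptotically conical, the normal exponential map $\Phi(x,\lambda)=x+\lambda\nu_\Sigma$ is a diffeomorphism on a uniform tubular neighborhood, and one can invoke a standard short-time existence theorem directly in that parametrization. Your approximation scheme should also work, but it is more laborious than necessary.

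There is, however, a genuine gap in your argument for item (2). First, $\beta$ is \emph{given} in the hypothesis and appears with the same value in the conclusion; you are not free to enlarge it. (One can increase $\beta$ at the cost of shrinking $c$, since $g_{\beta'}/g_\beta\to 0$ for $\beta'>\beta$, but then you would only be proving a weaker statement than the one asserted.) Second, and more seriously, your absorption claim does not close globally. The identity you derived,
\[
(\partial_t-\Delta_{\Sigma_t}-|A|^2)\Phi
=-4c\,g_\beta''(s)\,|x^T|^2+2c\,g_\beta'(s)\,H\langle x,\nu\rangle-c\,|A|^2 g_\beta(s),
\]
has two already nonpositive terms (note $g_\beta''>0$ and $-c|A|^2g_\beta\le 0$; the latter need not be ``dominated'' at all). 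The only potentially positive contribution is $2cg_\beta'H\langle x,\nu\rangle$ when $H\langle x,\nu\rangle<0$, and you claim that $-4c\beta^2 g_\beta|x^T|^2$ controls it. At infinity this is fine because $|x^T|\sim|x|$, $\langle x,\nu\rangle=O(1)$, and $H=O(|x|^{-1})$. But on the compact part of $\Sigma_t$ there is no reason for $|x^T|^2$ to dominate $|\langle x,\nu\rangle|$: at any point where the surface is nearly orthogonal to the position vector one has $|x^T|$ small and $|\langle x,\nu\rangle|\approx|x|$, and your leading term vanishes while the cross term need not. The curvature decay $|A|^2\lesssim|x|^{-2}$ you invoke is likewise only an asymptotic statement and gives nothing on compacta. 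So the strict subsolution property is not established on all of $\Sigma_t$, and the maximum principle step does not go through as written. The paper does not give this computation either; it cites Lemma~5.4 of \cite{BWTopologicalUniqueness}, whose proof handles the interior region by a more careful argument than the one you have sketched.
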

\begin{proof}
	Consider the map $\Phi: \Sigma \times (-\varepsilon,\varepsilon) \to \mathbb{R}^{n+1}$ given by
	\begin{align*}
		\Phi(x,\lambda) = x + \lambda \nu_\Sigma.
	\end{align*}
	Since $\Sigma$ is asymptotically conical, we can choose $\varepsilon$ sufficiently small so that the above map is a diffeomorphism onto its image for every $\lambda \in (-\varepsilon,\varepsilon)$. Using this parametrization we can invoke standard existence theorem for MCF to conclude that there exists a unique MCF starting from $\Sigma_1 = \Sigma$. For the properties, Item (1) and (3) follow from \cref{interior-estimates}, and Item (2) is Lemma 5.4 of \cite{BWTopologicalUniqueness}. 
\end{proof}

Given an unstable self-expander $\Sigma$ asymptotic to $\mathcal{C}$, we wish to apply \cref{existence-theorem} to the perturbed self-expander $\Sigma^\varepsilon$ defined by
\begin{align*}
	\Sigma^\varepsilon = \Psi^\varepsilon(\Sigma) \text{ where } \Psi^\varepsilon(x) = x + \varepsilon f(x) \nu_\Sigma.
\end{align*}
It remains to check that $\Sigma^\varepsilon$ satisfies the assumption of \cref{existence-theorem}. We need the following $C^0$-estimate on the first eigenfunction $f$.
\begin{lem}[Proposition 3.2 of \cite{BWMountainPass}]
	\label{eigenfunction-decay-estimates}
	Let $\Sigma$ be an unstable connected self-expander. Let $f$ be the unique positive eigenfunction of $L_\Sigma$ with eigenvalue $\mu_1 < 0$ and $\norm{f}_{W^0_{1/4}(\Sigma)} = 1$. Then $f$ satisfies the following $C^0$ estimate:
	\begin{align*}
		C^{-1}(1 + \abs{x}^2)^{-\frac{1}{2}(n+1 - 2\mu_1)} e^{-\frac{\abs{x}^2}{4}} \le f \le C(1 + \abs{x}^2)^{-\frac{1}{2}(n+1 - 2\mu_1)} e^{-\frac{\abs{x}^2}{4}}.
	\end{align*}
	where $C = C(\Sigma)$.
\end{lem}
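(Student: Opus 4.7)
The strategy is to reduce the eigenvalue equation to an equation for $h = f \, e^{|x|^2/4}$ that looks like a shrinker-type drift equation, and then to sandwich $h$ between carefully constructed radial barriers whose exponents realize the sharp decay rate $\alpha = n+1-2\mu_1$.

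First I would perform the substitution. Writing $f = h\, e^{-|x|^2/4}$ and using the self-expander identity $H = \frac{1}{2}x^\perp$ together with $\Div_\Sigma(x^T) = n - \tfrac{1}{2}|x^\perp|^2$, a direct calculation converts $-L_\Sigma f = \mu_1 f$ into
\begin{align*}
    \Delta_\Sigma h - \tfrac{1}{2}\, x \cdot \nabla_\Sigma h + \Bigl(|A|^2 - \tfrac{n+1}{2} + \tfrac{|x^\perp|^2}{4} + \mu_1\Bigr) h = 0.
\end{align*}
By \cref{expander-curvature-estimates} and the self-expander equation, both $|A|^2$ and $|x^\perp|^2$ are $O(|x|^{-2})$ at infinity, so the equation for $h$ is an $O(|x|^{-2})$ perturbation of $\Delta_\Sigma h - \tfrac{1}{2} x \cdot \nabla_\Sigma h + \bigl(\mu_1 - \tfrac{n+1}{2}\bigr) h = 0$. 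On the asymptotic cone $\mathcal{C}$ this reduced equation is explicitly solvable in polar coordinates by separation of variables, and a radial power $h = r^{-\alpha}$ satisfies it to leading order exactly when $\alpha = n+1-2\mu_1$, since the dominant term in $\Delta_\Sigma r^{-\alpha} - \tfrac{1}{2} x \cdot \nabla_\Sigma r^{-\alpha}$ is $\tfrac{\alpha}{2} r^{-\alpha}$ which must balance $(\tfrac{n+1}{2}-\mu_1) r^{-\alpha}$.

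Next I would run a barrier argument in two rounds. In the first round, I take test functions $\phi_\pm^\varepsilon = (1+|x|^2)^{-(\alpha\mp\varepsilon)/2}$ for small $\varepsilon > 0$ and compute directly that, in the asymptotically conical region $\Sigma \setminus B_R(0)$ for $R = R(\varepsilon)$ large, the $r^{-\alpha\pm\varepsilon}$-coefficient of the transformed operator applied to $\phi_+^\varepsilon$ is strictly negative while that applied to $\phi_-^\varepsilon$ is strictly positive; the $O(|x|^{-2})$ corrections from $|A|^2$ and $|x^\perp|^2$ are absorbed by the $\mp\varepsilon r^2/2$ gain in the leading $r^2$ term. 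Applying the maximum principle to $h$ (which is positive by the sign of $f$) on $\{R \le |x| \le R'\}$ with $R'\to\infty$ and using the rapid decay $f \in W^0_{1/4}(\Sigma)$ to control the behavior at infinity, this yields the rough bounds $C_\varepsilon^{-1} |x|^{-\alpha-\varepsilon} \le h \le C_\varepsilon |x|^{-\alpha+\varepsilon}$ outside a large ball. In the compact region $\{|x| \le R\}$, the smooth positive function $h$ is trapped between positive constants by Harnack's inequality.

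In the second round I would upgrade $\varepsilon$ to $0$ by refining the barriers. I take
\begin{align*}
    \Phi_\pm(x) = (1+|x|^2)^{-\alpha/2}\bigl(1 \pm K(1+|x|^2)^{-1}\bigr)
\end{align*}
for a large constant $K$, and again compute the transformed operator. The leading $r^{-\alpha}$ and $r^{-\alpha} \cdot r^2$ terms now cancel exactly by the choice of $\alpha$, so the sign is determined at order $r^{-\alpha-2}$; here the $\pm K$ correction combines with the $O(|x|^{-2})$ error terms produced by $|A|^2$, $|x^\perp|^2$, and the $C^{2,\alpha}$-conical deviation of $\Sigma$ from $\mathcal{C}$, and for $K$ chosen sufficiently large it dominates those error terms with the desired sign. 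Comparison of $h$ with $\Phi_+$ and $\Phi_-$ then produces the sharp two-sided estimate $C^{-1}(1+|x|^2)^{-\alpha/2} \le h \le C (1+|x|^2)^{-\alpha/2}$, which translates into the claimed bound on $f$. The main obstacle is bookkeeping in this last step: one must show the $O(|x|^{-2})$ error terms coming from curvature and the cone approximation combine to have uniformly bounded size (not just uniform decay), so that a finite constant $K$ suffices; this is where the asymptotic conicity of $\Sigma$ in $C^{2,\alpha}$ is essential.
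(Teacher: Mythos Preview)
The paper itself does not prove this lemma; it is stated with attribution to Bernstein--Wang (Proposition 3.2 of \cite{BWMountainPass}) and invoked as a black box in \cref{smooth-construction-appendix}. So there is no in-paper argument to compare against, and your proposal should be read as an independent proof.

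Your overall strategy is correct and is the standard one for such weighted decay estimates. The substitution $f=he^{-|x|^2/4}$ and the resulting equation for $h$ are computed correctly (including the identity $\Div_\Sigma x^T = n - \tfrac{1}{2}|x^\perp|^2$ on a self-expander), the identification of the critical exponent $\alpha=n+1-2\mu_1$ from the leading drift term $\tfrac{\alpha}{2}r^{-\alpha}$ is right, and the two-pass barrier scheme (first $\alpha\mp\varepsilon$, then sharp $\alpha$ with a correction $\pm K(1+|x|^2)^{-1}$ absorbing the $O(|x|^{-2})$ curvature errors) is exactly how one closes such arguments.

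There is, however, one genuine gap in your first round. To run the maximum principle for $h$ against the supersolution $\phi_+^\varepsilon$ on $\{R\le|x|\le R'\}$ and send $R'\to\infty$, you need a priori control of $h$ at the outer boundary: you must already know that $h/\phi_+^\varepsilon$ stays bounded as $|x|\to\infty$. The hypothesis $f\in W^0_{1/4}(\Sigma)$ is only an integral bound and does not by itself give pointwise decay of $h=fe^{|x|^2/4}$; indeed the drift equation for $h$ admits a second fundamental solution growing like $e^{|x|^2/4}$, and nothing in your outline excludes it. One standard fix is to first extract a crude polynomial bound on $h$ via an ODE analysis on the asymptotically conical end: the radial part of your equation for $h$ reduces to a confluent hypergeometric equation whose two independent solutions behave like $r^{-\alpha}$ and $r^{\alpha-n}e^{r^2/4}$, and the $W^0_{1/4}$ condition (equivalently $\int h^2 e^{-|x|^2/4}<\infty$) kills the growing branch. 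Alternatively one can run a Moser iteration on dyadic annuli, but then the dependence of the constants on the drift coefficient (which has size $|x|$) must be tracked carefully. Either way, your one-line appeal to ``rapid decay $f\in W^0_{1/4}$'' is hiding a nontrivial step that should be made explicit before the barrier comparison can begin.
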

\cref{eigenfunction-decay-estimates} and \cref{expander-curvature-estimates} imply that there exists $N > 0$ such that $\Sigma^\varepsilon \setminus B_{NR}(0) \subset \mathcal{T}_{R^{-1}}(\mathcal{C})$ for $R > 1$, so the first condition to apply \cref{existence-theorem} is satisfied. Finally we need to show that perturbing by the first eigenfunction produces a hypersurface with positive expander mean curvature. This relies on the fact that $-L_\Sigma$ is the linearization of the self-expander equation.
\begin{lem}
	\label{expander-mean-convexity}
	Let $\Sigma$ be a connected self-expander $C^{2,\alpha}$-asymptotic to $\mathcal{C}$. Let $f$ be the unique positive eigenfunction corresponding to $\mu_1$ of $L_\Sigma$ with $\norm{f}_{W^0_{1/4}(\Sigma)} = 1$. Then there exists $\varepsilon_0 > 0$ such that for all $\abs{\varepsilon} < \varepsilon_0$ there is $\beta = \beta(\varepsilon)$ such that
	\begin{align*}
		E_{\Sigma^\varepsilon}(x) \ge cg_\beta(1+\abs{x}^2).
	\end{align*}
	Here $\Sigma^\varepsilon$ is the image of $\Sigma$ under the map $\Phi(x) = x + \varepsilon f(x)\nu_\Sigma$.
\end{lem}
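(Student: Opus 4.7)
The plan is to Taylor-expand the relative expander mean curvature $E_{\Sigma^\varepsilon}$ in $\varepsilon$ about the self-expander $\Sigma$, using two observations: $E_\Sigma \equiv 0$ because $\Sigma$ is a self-expander, and $-L_\Sigma$ is precisely the linearization of the self-expander operator under normal variations. Parametrizing $\Sigma^\varepsilon$ through the map $\Psi^\varepsilon$ and invoking the standard variation formulae for $H$ and for the unit normal under a displacement in direction $\varepsilon f\nu_\Sigma$, I expect an expansion of the form
\begin{align*}
E_{\Sigma^\varepsilon}\circ \Psi^\varepsilon(x) = -2\varepsilon L_\Sigma f(x) + \varepsilon^2 R(\varepsilon, x) = 2\varepsilon(-\mu_1)\,f(x) + \varepsilon^2 R(\varepsilon, x),
\end{align*}
where the remainder $R$ depends smoothly on $\varepsilon$ and polynomially on $f$, $\nabla f$, $\nabla^2 f$ and on the first and second fundamental forms of $\Sigma$. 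Since $\mu_1 < 0$ and $f > 0$ by the choice of the first eigenfunction, the leading-order term has a definite sign; the case $\varepsilon < 0$ will be handled by taking the outward normal of $\Sigma^\varepsilon$ consistent with the region on the opposite side of $\Sigma$, which flips the sign of $E$ alongside the sign of $\varepsilon$ so that positivity is preserved.

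To upgrade this pointwise linearization to the quantitative $g_\beta$ bound, I would combine \cref{eigenfunction-decay-estimates} with Schauder estimates for the eigenfunction equation $L_\Sigma f = -\mu_1 f$. The rescaling trick in the proof of \cref{expander-curvature-estimates}, applied on balls of radius $\sim \abs{x}$ around each point, upgrades the $C^0$ control of $f$ to analogous polynomial-times-Gaussian bounds on $\abs{\nabla f}$ and $\abs{\nabla^2 f}$; combined with $\abs{A_\Sigma}(x) \le C(1+\abs{x})^{-1}$ this yields a uniform pointwise estimate $\abs{R(\varepsilon,x)} \le C f(x)$ for all $x \in \Sigma$ and all sufficiently small $\varepsilon$. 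Choosing $\varepsilon_0 = \abs{\mu_1}/(2C)$ then forces
\begin{align*}
E_{\Sigma^\varepsilon}(x) \ge \abs{\varepsilon \mu_1}\, f(x) \ge c\,(1 + \abs{x}^2)^{-\frac{1}{2}(n+1-2\mu_1)} e^{-\abs{x}^2/4}
\end{align*}
on $\Sigma^\varepsilon$, via the lower bound from \cref{eigenfunction-decay-estimates}.

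The last step is to pick $\beta > 1/4$: the ratio between the right-hand side above and $g_\beta(1+\abs{x}^2) = (1+\abs{x}^2)^{-\beta} e^{-\beta(1+\abs{x}^2)}$ is $(1+\abs{x}^2)^{\beta - \frac{1}{2}(n+1-2\mu_1)} e^{(\beta - 1/4)\abs{x}^2 + \beta}$, which tends to $+\infty$ as $\abs{x}\to \infty$ and is uniformly bounded below on compact sets, so after shrinking $c$ one obtains $E_{\Sigma^\varepsilon} \ge c\,g_\beta(1+\abs{x}^2)$. The main obstacle I anticipate is establishing the uniform remainder bound $\abs{R} \le Cf$: the quadratic contributions from the second variation of $H$ and of $\langle x, \nu_\Sigma\rangle$ involve disparate products such as $f\abs{A}^2$, $\abs{\nabla f}^2$, $f\abs{\nabla^2 f}$ and $\langle x, \nabla f\rangle\abs{A}$, each decaying at its own weighted rate on the noncompact $\Sigma$, and keeping track of these directly is bookkeeping-heavy. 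The cleaner route is a scale-invariant rescaling on balls of radius $\sim\abs{x}$ (in the spirit of \cref{expander-curvature-estimates} and \cref{interior-estimates}), turning the estimate into a compactness statement on the rescaled geometry where all relevant quantities are uniformly bounded and the worst-case product of $f$ and its derivatives is trivially comparable to $f$ itself.
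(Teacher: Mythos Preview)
Your proposal is correct and follows essentially the same route as the paper: Taylor-expand $E_{\Sigma^\varepsilon}$ in $\varepsilon$, identify the linear term as $-\varepsilon L_\Sigma f = -\varepsilon \mu_1 f$ (the paper obtains this expansion by quoting Lemma~A.2 of \cite{BWRelativeEntropy}, where the remainder is a homogeneous quadratic $Q(f,\langle x,\nabla_\Sigma f\rangle,\nabla_\Sigma f,\nabla^2_\Sigma f)$ with bounded coefficients), and then invoke \cref{eigenfunction-decay-estimates} to absorb the remainder and produce the $g_\beta$ lower bound with $\beta=\tfrac12(n+1-2\mu_1)$. Your additional discussion of Schauder/rescaling to propagate the $C^0$ decay of $f$ to $\nabla f,\nabla^2 f$ is exactly what is implicit in the paper's appeal to the cited lemma, so there is no substantive difference in strategy.
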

\begin{proof}
	By Lemma A.2 of \cite{BWRelativeEntropy} we have (the computation is long, but the result should be standard),
	\begin{align*}
		E_{\Sigma^\varepsilon}(x) = -\varepsilon L_\Sigma f + \varepsilon^2 Q(f,\inner{x,\nabla_\Sigma f},\nabla_\Sigma f,\nabla^2_\Sigma f)
	\end{align*}
	for some homogeneous quadratic polynomial $Q$ with bounded coefficients. When $\varepsilon > 0$, it follows from \cref{eigenfunction-decay-estimates}, that, up to further shrinking $\varepsilon$,
	\begin{align*}
		E_{\Sigma^\varepsilon}(x) \ge \varepsilon\mu_1C^{-1}(1+\abs{x}^2)^{-\frac{1}{2}(n+1 - 2\mu_1)} e^{-\frac{1+\abs{x}^2}{4}} \ge cg_\beta(1 + \abs{x}^2)
	\end{align*}
	where $\beta = \frac{1}{2}(n+1 - 2\mu_1) > 0$. The case $\varepsilon < 0$ can be handled similarly. 
\end{proof}

\cref{expander-mean-convexity} shows that $\Sigma^\varepsilon$ satisfies the second condition of \cref{existence-theorem} for sufficiently small $\varepsilon$. As such, \cref{existence-theorem} can be applied to conclude the short-time existence of an expander mean-convex MCF starting from $\Sigma^\varepsilon$.

\section{Maximum principles}
\label{maximum-principles-appendix}
Here we record the maximum principles from Section 3 of \cite{CHHW} for Brakke flows. These are the essential tools in applying the moving plane method without smoothness. If $\mathcal{M}$ is an integral Brakke flow and $X = (x_0,t_0) \in \supp \mathcal{M}$, the Gaussian density at $X$ is 
\begin{align*}
	\Theta_\mathcal{M}(X) = \lim_{\rho \to 0} \frac{1}{(4\pi \rho^2)^{n/2}} \int_{\mathcal{M}_{t_0 - \rho^2}} e^{-\frac{\abs{x-x_0}^2}{4\rho^2}} d\mathcal{H}^n.
\end{align*}
$\Theta_\mathcal{M}(X)$ is well-defined by Huisken's monotonicity formula. Observe that an entropy upper bound automatically gives upper bounds on all Gaussian densities. 
\begin{thm}[Maximum principle for Brakke flows, Theorem 3.4 of \cite{CHHW}]
	\label{maximum-principles}
	Let $\mathcal{M}$ be a smooth MCF defined in a parabolic ball $P(X,r)$, where $X = (x_0,t_0) \in \supp \mathcal{M}$ and $r > 0$ is sufficiently small such that $\supp \mathcal{M}$ separates $P(x,r)$ into two open connected components $U$ and $U'$. Let $\mathcal{M}'$ be an integral Brakke flow in $P(X,r)$ with $X \in \supp \mathcal{M}'$ and Gaussian density $\Theta_X(\mathcal{M}') < 2$. If $\supp \mathcal{M}' \subset U \cup \supp \mathcal{M}$, then $X$ is a smooth point for $\mathcal{M}'$, and $\mathcal{M}'$ agrees with $\mathcal{M}$ in a small parabolic ball. 
\end{thm}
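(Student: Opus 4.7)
The plan is to analyze the tangent flows of $\mathcal{M}'$ at $X$ using Huisken's monotonicity formula, then to upgrade the information to smoothness of $\mathcal{M}'$ at $X$ via Brakke's regularity theorem, and finally to deduce agreement with $\mathcal{M}$ via the classical smooth maximum principle. After translating $X$ to the spacetime origin, rescale $\mathcal{M}'$ parabolically by factors $\lambda_i\to\infty$ to obtain, modulo a subsequence, a tangent flow $\mathcal{X}=\{\nu_s\}_{s<0}$ at $X$. By Huisken's monotonicity and the integrality hypothesis, $\mathcal{X}$ is a self-shrinker in the sense that $\nu_s = \mathcal{D}_{\sqrt{-s}}\nu_{-1}$, and the Gaussian density condition $\Theta_X(\mathcal{M}')<2$ together with upper semicontinuity of density passes to give $\Theta(\mathcal{X})\le \Theta_X(\mathcal{M}')<2$.

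The central step is to use the one-sided barrier provided by $\mathcal{M}$. Since $\mathcal{M}$ is smooth at $X$, the parabolic rescalings of $\mathcal{M}$ by $\lambda_i$ converge to the static multiplicity-one hyperplane $\Pi = T_{x_0}\mathcal{M}_{t_0}$ (this uses smoothness of $\mathcal{M}$ at $X$ and standard parabolic blowup). The hypothesis $\supp\mathcal{M}'\subset U\cup\supp\mathcal{M}$ rescales to give that $\supp\mathcal{X}$ lies in the closure of a single open half-space $\mathbb{H}$ with $\partial\mathbb{H}=\Pi$. Thus the shrinker $\nu_{-1}$ is a stationary (with respect to the Gaussian-weighted volume on one side) integral varifold confined to one closed half-space with boundary the hyperplane $\Pi$ and containing the origin. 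A one-sided strong maximum principle for self-shrinkers (viewed as minimal surfaces in the conformal Gaussian metric, combined with Solomon–White type strong maximum principles for stationary varifolds, cf.\ \cite{SolomonWhite}) forces $\supp\nu_{-1}\supset\Pi$; combined with $\Theta(\mathcal{X})<2$ and integrality, the only possibility is that $\nu_{-1}=\mathcal{H}^n\llcorner\Pi$ with multiplicity one.

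Now Brakke's regularity theorem (equivalently, White's local regularity theorem for Brakke flows) applies: a tangent flow at $X$ being a multiplicity-one plane implies that $X$ is a regular point of $\mathcal{M}'$ and that $\mathcal{M}'$ is a smooth MCF in some smaller parabolic cylinder $P(X,r')$, tangent at $X$ to $\Pi=T_{x_0}\mathcal{M}_{t_0}$. In particular $\mathcal{M}'$ and $\mathcal{M}$ are both smooth MCF through the same spacetime point with the same tangent plane, and $\mathcal{M}'$ lies on one side of $\mathcal{M}$ in $P(X,r')$. The classical smooth strong maximum principle for the (quasilinear parabolic) graphical MCF equation then forces $\mathcal{M}'$ and $\mathcal{M}$ to coincide in a possibly smaller parabolic neighborhood of $X$, which is the desired conclusion.

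The main obstacle is the one-sided rigidity for the tangent shrinker: ruling out the possibility that $\nu_{-1}$ is some nontrivial self-shrinking hypersurface contained in $\overline{\mathbb{H}}$ and touching $\Pi$ only at $0$. This is where both the density bound $\Theta<2$ (which bars multiplicity-two planes and higher-density shrinkers that could tangentially kiss $\Pi$) and the strong maximum principle for stationary integral varifolds in the Gaussian-weighted metric are essential; once that rigidity is in hand the rest is a routine application of Brakke's regularity and the smooth maximum principle.
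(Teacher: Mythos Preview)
The paper does not give a proof of this statement: it is recorded in Appendix~B purely as a citation of Theorem~3.4 of \cite{CHHW}, with no argument supplied. So there is no ``paper's own proof'' to compare your attempt against.

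That said, your outline is essentially the standard route and is almost certainly what \cite{CHHW} does: blow up at $X$, use smoothness of $\mathcal{M}$ to see that its rescalings converge to the static plane $\Pi = T_{x_0}\mathcal{M}_{t_0}$, deduce that any tangent flow of $\mathcal{M}'$ is a self-shrinker supported in the closed half-space $\overline{\mathbb{H}}$, conclude it is the multiplicity-one plane $\Pi$, apply White/Brakke local regularity, and finish with the smooth strong maximum principle.

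There is one soft spot. You assert that $0\in\supp\nu_{-1}$ and then invoke Solomon--White at that touching point. But a priori one only knows that $(0,0)$ lies in the \emph{spacetime} support of the tangent flow; for a self-similar shrinker $\nu_s=\mathcal{H}^n\llcorner\sqrt{-s}\,\Sigma$ this forces $\Theta_{(0,0)}=F[\Sigma]\ge 1$, not $0\in\Sigma$ (think of the round sphere, where $0\notin\Sigma$). So the hypothesis needed for Solomon--White is not yet in hand. The clean fix, which also avoids any smoothness assumption on $\Sigma$, is to test the $F$-stationarity of the shrinker with the linear function $u(x)=\langle x,\nu_\Pi\rangle$: one gets
\[
\int_\Sigma \langle x,\nu_\Pi\rangle\, e^{-|x|^2/4}\,d\mathcal{H}^n = 0,
\]
so the half-space constraint $\langle x,\nu_\Pi\rangle\ge 0$ forces $\supp\nu_{-1}\subset\Pi$. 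The constancy theorem and the density bound $\Theta_X(\mathcal{M}')<2$ then give $\nu_{-1}=\mathcal{H}^n\llcorner\Pi$ with multiplicity one, after which your Steps (Brakke/White regularity, then the smooth parabolic strong maximum principle) go through verbatim.
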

\begin{thm}[Hopf lemma for tame Brakke flows, Theorem 3.19 of \cite{CHHW}]
	\label{hopf-lemma}
	Let $\mathcal{M}$ and $\mathcal{M}'$ be two integral Brakke flows defined in a parabolic ball $P(X,r)$ where $X = (x_0,t_0) \in \supp \mathcal{M} \cap \mathcal{M}'$. Suppose $X$ is a tame point (see \cref{tameness-defn}) for both $\mathcal{M}$ and $\mathcal{M}'$ and let $\mathbb{H} \subset \mathbb{R}^{n+1}$ be an open half space with $x_0 \in \partial \mathbb{H}$. If in addition $\partial \mathbb{H}$ is not the tangent flow to either $\mathcal{M}$ or $\mathcal{M}'$, and $\reg \mathcal{M}_t \cap \mathbb{H}$ and $\reg \mathcal{M}'_t \cap \mathbb{H}$ are disjoint for $t \in (t_0 - r^2, t_0)$, then $\mathcal{M}$ and $\mathcal{M}'$ are smooth at $(0,0)$ with distinct tangents. 
\end{thm}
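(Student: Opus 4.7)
The plan is to upgrade the weak tameness and one-sidedness hypotheses into smoothness at $X$, and then run a classical parabolic Hopf argument on graphs. The proof naturally splits into two phases: (i) prove that $X$ is a regular point for both $\mathcal{M}$ and $\mathcal{M}'$; (ii) assuming the tangent planes coincided, derive a contradiction via a parabolic Hopf lemma for a linear equation.

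For phase (i), I would analyze the tangent flow $\mathcal{X}$ at $X$ to $\mathcal{M}$ (the argument for $\mathcal{M}'$ is identical). By tameness, the $t=-1$ slice of $\mathcal{X}$ is smooth with multiplicity one away from a set of $\mathcal{H}^{n-1}$-measure zero, and the same holds for any iterated tangent flow. By Huisken's monotonicity $\Theta_X(\mathcal{M})$ is finite, so we may extract a tangent cone at the tip of the tangent flow: this yields a stationary integral cone $\Gamma$ that is tame, and by iterating (passing to tangent cones of $\Gamma$) we may assume $\Gamma$ splits off maximally many lines. The disjointness hypothesis $\reg\mathcal{M}_t\cap\mathbb{H}\cap\reg\mathcal{M}'_t\cap\mathbb{H}=\emptyset$ passes to the blow-up, forcing the regular part of $\Gamma$ to lie in $\mathbb{R}^{n+1}\setminus\mathbb{H}$ (up to interchanging). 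Combined with tameness (which rules out, e.g., a triple junction), $\Gamma$ must be either (a) the hyperplane $\partial\mathbb{H}$ with some multiplicity, or (b) a smooth multiplicity-one hyperplane through $0$ distinct from $\partial\mathbb{H}$. Case (a) is excluded by hypothesis. In case (b), the Brakke regularity theorem (applicable since the density is $1$) promotes $X$ to a smooth point of $\mathcal{M}$. The same argument applied to $\mathcal{M}'$ produces a tangent hyperplane $P'$ at $X$ and smoothness.

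For phase (ii), with $\mathcal{M}$ and $\mathcal{M}'$ smooth near $X$ and tangent planes $P, P'$, suppose for contradiction that $P = P'$. After choosing local coordinates so that $P$ is horizontal, we may represent $\mathcal{M}$ and $\mathcal{M}'$ as graphs $u,u'$ of functions on a parabolic cylinder around $X$ in $P$, with $u(X)=u'(X)=0$ and $\nabla u(X)=\nabla u'(X)=0$. Both satisfy the graphical MCF equation; their difference $w=u-u'$ satisfies a linear uniformly parabolic equation with smooth coefficients. The disjointness of the regular parts in $\mathbb{H}$ translates into $w$ having a definite sign (say $w\ge 0$) on the image in $\mathbb{H}$, with $w(X)=0$. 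Applying the parabolic Hopf lemma to $w$ at the boundary point $X$ of the spacetime half-cylinder over $\mathbb{H}\cap P$ forces the outward normal derivative of $w$ at $X$ to be strictly positive (or $w\equiv 0$, which would contradict the disjointness, since $X$ would then lie in the common regular part). But $\nabla w(X)=0$ since the tangent planes coincide, a contradiction. Hence $P\ne P'$.

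The principal obstacle is phase (i): extracting smoothness at $X$ without any a priori bound on the singular set beyond $\mathcal{H}^{n-1}=0$. The iterated-tangent-cone reduction to a flat cone (rather than, say, a minimal surface with a small singular set) relies crucially on tameness surviving the blow-up, on the hypothesis that $\partial\mathbb{H}$ is not itself a tangent flow, and on the one-sided barrier produced by the disjointness. Once the tangent flow is a multiplicity-one hyperplane, the Brakke regularity theorem finishes the job, and the subsequent Hopf step is then essentially classical PDE.
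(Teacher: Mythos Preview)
The paper does not prove this theorem at all: it is stated in Appendix~B explicitly as a result ``recorded'' from Section~3 of \cite{CHHW}, and is used as a black box in the moving plane argument. So there is no proof in the paper to compare your proposal against.

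That said, your two-phase architecture is the right one, and phase~(ii) is essentially the classical graphical Hopf step and is fine. Phase~(i), however, has a genuine gap as written. The hypothesis is that $\reg\mathcal{M}_t\cap\mathbb{H}$ and $\reg\mathcal{M}'_t\cap\mathbb{H}$ are \emph{disjoint from each other}, not that either one is empty or lies on one side of $\partial\mathbb{H}$. Blowing up $\mathcal{M}$ alone at $X$ therefore does not yield any one-sidedness constraint on the resulting tangent cone $\Gamma$; the information you have couples the two flows, and you cannot decouple it into a statement about $\Gamma$ by itself. In particular, the sentence ``forcing the regular part of $\Gamma$ to lie in $\mathbb{R}^{n+1}\setminus\mathbb{H}$ (up to interchanging)'' does not follow: both tangent flows could a priori have nontrivial pieces inside $\mathbb{H}$, just disjoint from one another. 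The actual argument in \cite{CHHW} blows up both flows simultaneously and uses the strong maximum principle for Brakke flows (their Theorem~3.4, recorded here as \cref{maximum-principles}) together with the tameness hypothesis to rule out non-planar tangent flows; the reduction to a multiplicity-one plane is considerably more delicate than the iterated-tangent-cone sketch you give.
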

We remark that in contrast to the usual smooth maximum principle and Hopf lemma, the smoothness is in fact a conclusion in both of the statements above. 

{\footnotesize
\bibliographystyle{alpha}
\bibliography{symmetry2ref.bib}}
\end{document}